\newcommand{\f}{\frac}
\newcommand{\1}{{\mathchoice {\rm 1\mskip-4mu l} {\rm 1\mskip-4mu l}{\rm 1\mskip-4.5mu l} {\rm 1\mskip-5mu l}}}
\DeclareMathOperator{\PP}{{\mathbb P}}
\DeclareMathOperator{\R}{{\mathbb R}}
\newcommand{\p}{\partial}
\DeclareMathOperator{\E}{{\mathbb E}}
\newcommand{\ep}{\varepsilon}
\def\t{\tilde}
\newtheorem{assumption}{Assumption}
\DeclareMathOperator{\N}{{\mathbb N}}
\DeclareMathOperator{\Z}{{\mathbb Z}}
\newcommand {\cU} {N}
\def\i{z}
\def\dst{\displaystyle}
\begin{document}

\chapter{Individual and population approaches \\  for calibrating division rates in population dynamics: Application to the bacterial cell cycle}

\markboth{M. Doumic \& M. Hoffmann}{Calibrating division rates}

\author{Marie Doumic and Marc Hoffmann}

\address{Sorbonne Universit\'es, Inria, UPMC Univ Paris 06 \\ Lab. J.L. Lions  UMR CNRS 7598, Paris, France
\\
marie.doumic@inria.fr \\
~
\\
University Paris-Dauphine, CEREMADE, \\ Place du Mar\'echal De Lattre de Tassigny, 75016 Paris, France \\ hoffmann@ceremade.dauphine.fr}

\begin{abstract}
Modelling, analysing and inferring  triggering mechanisms in population reproduction is fundamental in many biological applications. It is also an active and growing research domain in mathematical biology. In this chapter, we review the main results developed over the last decade for the estimation of the division rate in growing and dividing populations in a steady environment. These methods combine tools borrowed from PDE's and stochastic processes, with a certain view that emerges from mathematical statistics. A focus on the application to the bacterial cell division cycle provides  a concrete presentation, and may help the reader to identify  major new challenges in the field.
\end{abstract}

\vspace*{12pt}

\noindent {\bf Keywords:} cell division cycle, bacterial growth, inverse problem, nonparametric statistical inference, kernel density estimation, growth-fragmentation equation, growth-fragmentation process, renewal equation, renewal process, adder model, incremental model, asymptotic behaviour, long-term dynamics, eigenvalue problem, Malthusian parameter

\

\noindent {\bf Mathematics Subject Classification:}  35R30, 92B05, 35Q62, 62G05

\chaptercontents  

\section{Introduction}

\subsection{Biological motivation}

The study of stochastic or deterministic population dynamics, their qualitative behaviour and the inference of their characteristics is an increasingly important research field, which gathers various mathematical approaches as well as application fields. It benefits from the huge advances in gathering data, so that it is today possible not only to write and study qualitative models but also to calibrate them and assess their relevance in a quantitative manner. This chapter aims at contributing to review some recent advances and remaining challenges in the field, through the lense of a specific application, namely the bacterial cell division cycle. Guided by this application, we propose here a kind of roadmap for the mathematician in order to tackle a genuinely applied problem, coming from contemporary biology.

\subsubsection*{How does a population grow?}

Let us begin by describing the growth of a microcolony of bacteria, illustrated in the snapshots of Figure~\ref{fig:microcolony}
: out of one rod-shaped {\it E. coli} bacterium, a  colony rapidly emerges by the growth of each bacterium and its splitting into two daughter cells.

Nutrient being in large excess at this development stage, we can assume a steady environment. We also ignore the many fascinating questions arising from spatial considerations~\cite{duvernoy2018asymmetric,dell2018growing,doumic2020purely}, and focus on the two fundamental mechanisms at stake: growth and division. We can go back and forth between the population and the individual view: how does the knowledge of the growth and division laws of the individual lead to the knowledge of the population growth law, and conversely, to which extent observations on the population can help us infer the individual laws?

\begin{figure}
\begin{center}
{\includegraphics[width=\textwidth]{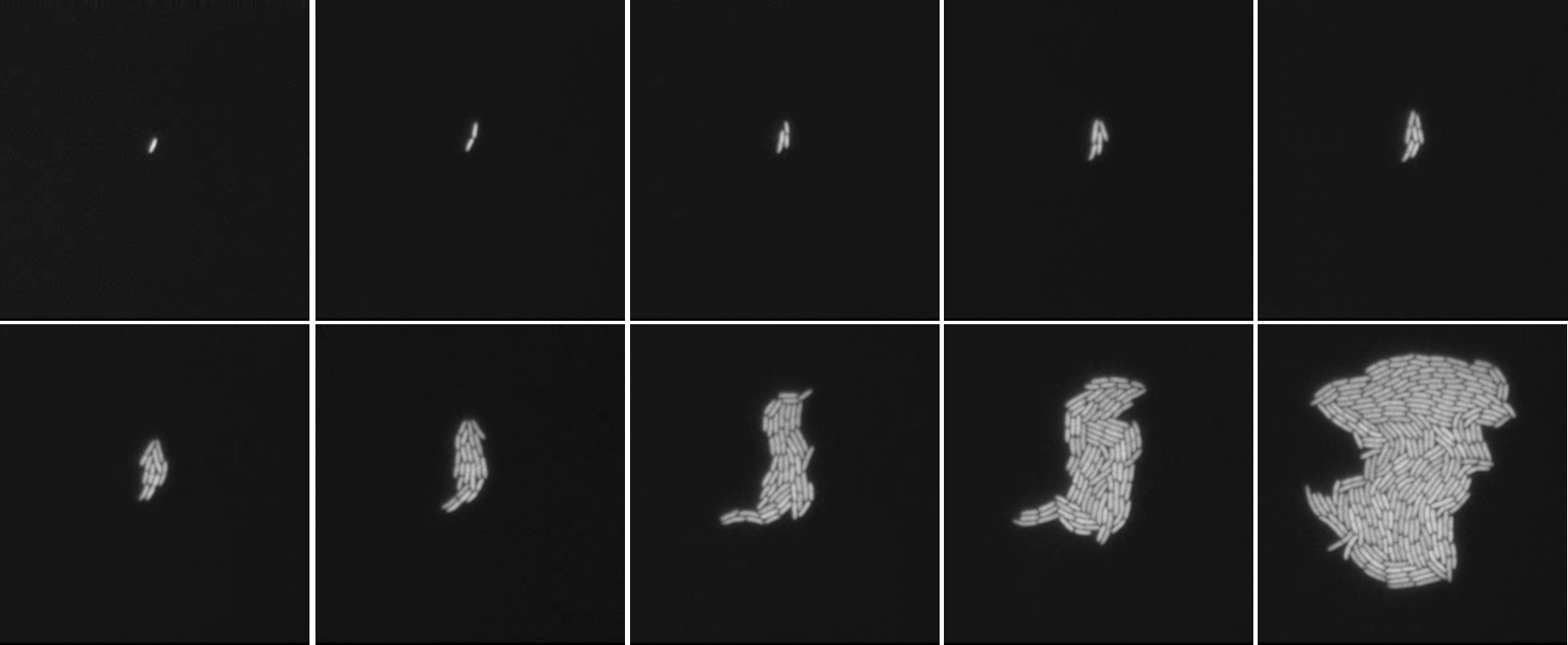}}
\caption{\label{fig:microcolony}\it
From left to right and top to bottom: Successive snapshots of the video~\url{https://doi.org/10.1371/journal.pbio.0030045.sv001}}
\end{center}
\end{figure}

\subsubsection*{How does a cell divide?}

To follow more easily individual characteristics of the cells over many generations, a microfluidic liquid-culture device called "the mother machine" has been developed in the last decade with an increasing success~\cite{balaban2004bacterial,Wang}. This is illustrated in Figure~\ref{fig:microflu}, a snapshot taken from the illustrating Movie S1, from~\cite{Wang}. The population is then reduced to independent lineages, but the two main mechanisms remain the same: growth and division. How do these two mechanisms coordinate each other? What triggers the bacterial division? To answer such questions, many studies have deciphered complex intracellular mechanisms, see for instance the recent review~\cite{meunier2021bacterial}, while others aim at inferring laws of growth and division out of the observation of population (as in Fig.~\ref{fig:microcolony}) or lineages (as in Fig.~\ref{fig:microflu}) dynamics. This last approach, which could be named phenomenological rather than mechanistic, constitutes the guideline of this chapter, and could be summed-up by the problematic: How much information on triggering mechanisms of growth and division can be extracted from  such data as in Figs.~\ref{fig:microcolony} and~\ref{fig:microflu}?

\begin{figure}
\begin{center}
{\includegraphics[width=\textwidth]{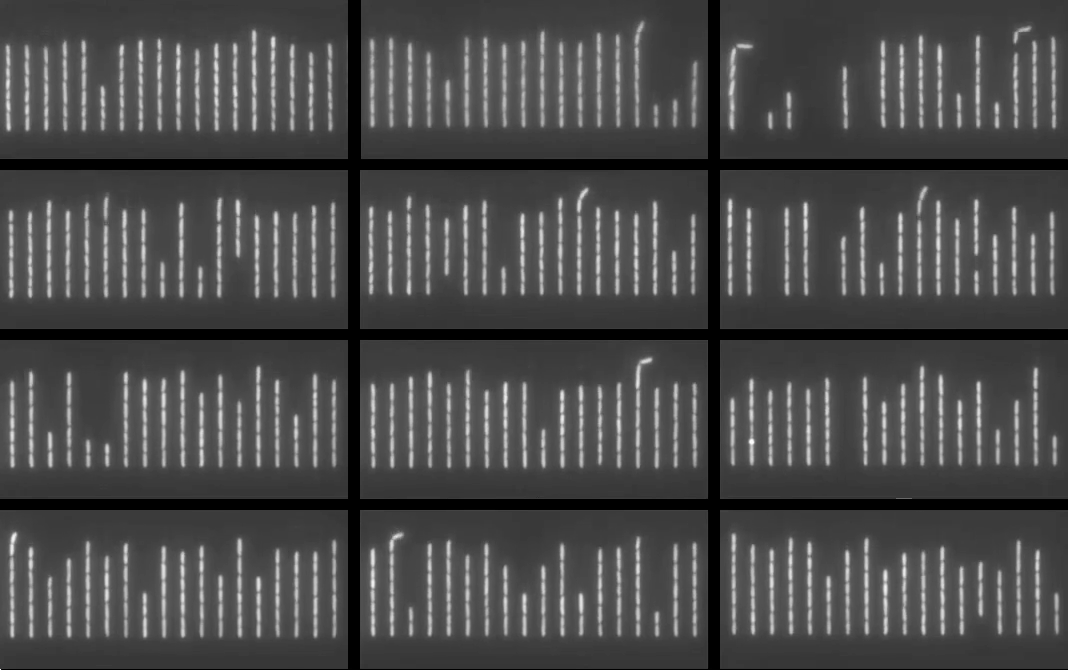}}
\caption{{\it From left to right and top to bottom: Snapshot of the video S1 of~\protect\cite{Wang}. In each channel bacteria grow and divide while remaining aligned, pushing new born cells towards the exit.\label{fig:microflu}}}
\end{center}
\end{figure}

%
\subsubsection*{Other applications}

We follow here the application to cell division; however, many other are possible, such as polymer fragmentation~\cite{XueRadford2013,beal2020division,tournus2021insights}, or mineral crushing~\cite{hoffmann2011statistical}, or yet other types of cell division cycles~\cite{basse}. This would lead us too far for this chapter, but we believe that many ideas gathered here for bacteria may apply to other fields.

\subsection{Outline of the chapter}

To serve as an outline of the chapter, let us enumerate the main steps towards the formulation of laws of growth and division in a process which is rather circular than linear in practice. This -- admittedly subjective -- methodological guideline is quite general, and many readers should recognize their own approach to their own problem; we then explain and specify them when applied to growing and dividing populations, and, more precisely, to the bacterial cell division cycle.

\begin{itemize}
\item {\bf Step 1) Preliminaries} - Sections~\ref{subsec:datanal} and~\ref{subsec:ass}

\begin{itemize}
\item {\bf Analyse data:} (or make the most of direct observations)

Biological data are often extremely rich, and only part of this richness is effectively analysed by experimenters, for instance through the use of averaged quantities rather than individual measurements. At the same time, the noise level is often very high, requiring an appropriate noise modelling approach. Mathematical methods at this first step are a combination of statistics, image analysis and interdisciplinary discussions between modellers and experimenters. This is carried out in our application case to bacterial growth and division in Section~\ref{subsec:datanal}.

\item {\bf Specify assumptions:} (or ``model and simplify'')

The data analysis carried out at Step 1 should lead to two types of hypothesis: simplifying ones, {\it a priori} justified by statistical quantifiers and which should be {\it a posteriori} verified by sensitivity analysis; and modelling ones, guiding conjectures on the underlying laws, which should be challenged at the end of the procedure. This is carried out in Subsection~\ref{subsec:ass}.

\end{itemize}

\item {\bf Step 2) Build a model} - Section~\ref{sec:model}

The goal of Section~\ref{sec:model} is to translate mathematically the assumptions done at Step 1. As suggested by the illustrations of Figs.~\ref{fig:microcolony} and~\ref{fig:microflu}, we may distinguish two types of models: individual-based and population models, leading to stochastic processes, branching trees, or integro-partial differential equations (PDEs).

\

\item {\bf Step 3) Analyse the model} - Section~\ref{sec:anal}

The models analyses, carried out in Section~\ref{sec:anal}, could seemingly be skipped to go directly to the conclusion by simulating the models as best as possible, with available simulation packages,  and comparing them to the data, using here again available fitting tools. We believe however that such approaches not only lack rigour but also risk missing enlightening information. Many success stories in many application domains could illustrate this general comment; in our very field, the cornerstone and foundation of the inverse problem solutions lies in the long-time asymptotics of the population models, as first described by B. Perthame and J. Zubelli~\cite{PZ}. For the polymer breakage application, not developed in this chapter, the use of time asymptotics as proved in~\cite{MischlerRicard} drastically simplified and justified the calibration of the fragmentation model~\cite{beal2020division,tournus2021insights}. In Section~\ref{sec:anal}, we thus review some of the main theoretical results which may prove useful for the following steps.

\

\item {\bf Step 4) Calibrate the model} - Section~\ref{sec:inverse}

We can do here the same remark as for Section~\ref{sec:anal}: standard calibration methods are often applicable, reducing this step to the choice of up-to-date softwares. A main drawback would be the difficulty to assess the confidence we may have in the results obtained; above all, theoretical error estimation inform us on the quantity of information available in the data collected, thus able to inspire design of new experiments. We thus develop in Section~\ref{sec:inverse} the inverse problem analyses carried out in the past decade for the different observation schemes, detailing some of the proofs in the illuminating example of the renewal model and in the "ideal mitosis" case, and reviewing the results obtained for more complex cases.

\

\item {\bf Step 5) Conclusion} - Section~\ref{sec:back}

At this stage, we have all the necessary tools on hand to confront a model to the data, and conclude on the validity on the modelling assumptions made at Step 2. In Section~\ref{sec:back}, a protocol to confront model to data is proposed and applied to experimental data for bacteria. It is however a step where many questions remain open, concerning the design and analysis of statistical tests as well as the formulation of new or more detailed models. It usually lead to boostrapping the methodology by circulate another round from Steps 1 to 5.

\end{itemize}


\subsection{First step: data analysis}
\label{subsec:datanal}
To build models that can be compared to the data, a preliminary step consists in having clear ideas on what can be measured and how. We first distinguish two types of data collection and two observation schemes, and then give some examples of what can be directly obtained from the data.

\subsubsection*{Data description: two types of datasets}



We have already shown in Figs~\ref{fig:microcolony} and~\ref{fig:microflu} two modern experimental settings to observe bacterial growth. In these two settings, a picture is taken at given time intervals - typically here, every $1$ to $5$ min - giving access, through image analysis, not only to time-dependent (noisy) samples of sizes but also to genealogical data and to the knowledge of the time elapsed since birth, of size dimensions at birth and of size dimensions at division, and of the ratio between the mother cell size at division and the offspring sizes. In the sequel, we call such cases  {\it individual dynamics data} , meaning that  some knowledge about the growth and division processes may be directly inferred from the data, where individual dynamics are collected.

 However, there are other situations, for instance when  {\it ex vivo} samples are collected, when only cheaper devices are available, or when the quantities of interest are not dynamically measured, or  yet for other applications such as protein fibrils. In all these cases, the experimenter can only observe size distribution of particles of interest, taken at one or several time points, but without being able to follow each one so that no individual observation of growth or division may be done. In the sequel, we call such cases {\it population point data}, meaning that we have information on the population dynamics or on point individual data, but no access to individual dynamics.


As detailed in Section~\ref{sec:inverse}, each type of data collection raises different problems. Schematically, in the population point data, one has to choose a given model, and the estimation questions  at stake are to determine which parameters may be inferred from the data and how precise this inference is. In the individual dynamics data, data are much richer and so are the questions to tackle:  first, as for the population point data,  estimate model parameters, and second, assess quantitatively how accurate the model is,  evaluate to which extent it can be enriched without over-estimation, and compare it with other models.

\subsubsection*{Data description: genealogical observation vs. population observation }

In the individual dynamics data cases, we have seen two distinct  experimental settings: either we follow the overall population until a certain time, as in Fig.~\ref{fig:microcolony} - we call this case the {\it population case}, corresponding to $k=2$ in the following, $k$ being the number of children at division - or we follow only one given lineage, since at each division we keep observing one out of the two daughter cells - we call this case the {\it genealogical case}, corresponding to $k=1$ in the following. As explained in Section~\ref{sec:model}, the mathematical model needs to adapt to these two cases, and so need the mathematical analysis and the model calibration.

At first sight, the population point data collection seems to apply only to the the population observation scheme ($k=2$), taking sparse pictures of a population state at some times. However, it could be imagined that in a given genealogical observation experiment ($k=1$) we are not able to determine when or where a cell divides; this will be the case for instance if the timesteps of observation are too large. It is thus also interesting to develop models and calibration methods for this seemingly strange but not irrealistic case.

For the population observation case $k=2$, if the cells are in constant growth conditions (unlimited nutrient and space),  the so-called Malthusian parameter characterises the exponential growth of the population. We denote it here $\lambda,$ meaning that the population grows like $e^{\lambda t}$ - rigorous meanings of this statement are provided in Section~\ref{sec:anal}. The Malthusian parameter can be measured in various ways for many different experimental conditions - as the total biomass increase for instance. Equivalently, biologists often refer to the doubling time $T_2$ of the population, with the immediate relation $T_2=\f{\ln(2)}{\lambda}$.


\subsubsection*{Data analysis: size distributions}

To each observation scheme correspond different types of data and specific measurement noise. Let us here gather some of the most frequent information we can extract.\\

It is possible to extract size dimension distributions from the two types of data collection described above. For instance, for a given sample of $n$  cells in a {\it E. coli} population, we measure their lengths at a given time $t$ to obtain
$$\big(x_1(t),\cdots, x_n(t)\big),$$
the width being considered roughly constant among cells.
 As a first approximation, we may assume that the observation is a $n$-drawn of a random vector
 $$\big(X_1(t),\cdots,X_n(t)\big),$$
 where the random variables $X_i(t)$ are independent, with common distribution $\mu(t)$; this assumption is obviously not valid, since we ignore the underlying dependent structure. Yet, it may well happen -- and this will be extensively discussed later -- that the sample behaves approximately for certain linear statistics like an $n$-drawn of a common distribution~\cite{hoffmann2016nonparametric}. In particular,  we may at least expect the convergence of the empirical measure
\begin{equation}
\label{eq:muN}
\mu_n=\f{1}{n}\sum\limits_{k=1}^n \delta_{X_k(t)}
\end{equation}
to $\mu(t)$ in a weak sense ~\cite{van1996weak}, possibly quantified by a distance like Wasserstein~\cite{fournier2015rate,weed2019sharp} that metrizes weak convergence. Assuming that $\mu$ has a density, and using for instance histograms or adaptive kernel density estimators
 we may  obtain a smooth estimate for the size distribution $\mu(t)$. This is treated at length in Section~\ref{sec:inverse}
. Assuming moreover that $\mu(t) = \mu$ does not depend on time, an assumption that can (and  will) be justified in some cases by the asymptotic analysis carried out in Section~\ref{sec:anal}, also known in biology as cell size homeostasis, we may concatenate all data taken at all time to get larger samples. This is illustrated in Fig.~\ref{fig:distrib:all} Right, where we have shown the length-distribution of cells taken at any time out of genealogical observation ($k=1,$ data from~\cite{Wang}) in blue and population observation in green ($k=2,$ data from~\cite{Eric}).

\begin{figure}
\label{fig:distrib}
\begin{center}
\includegraphics[width=\textwidth]{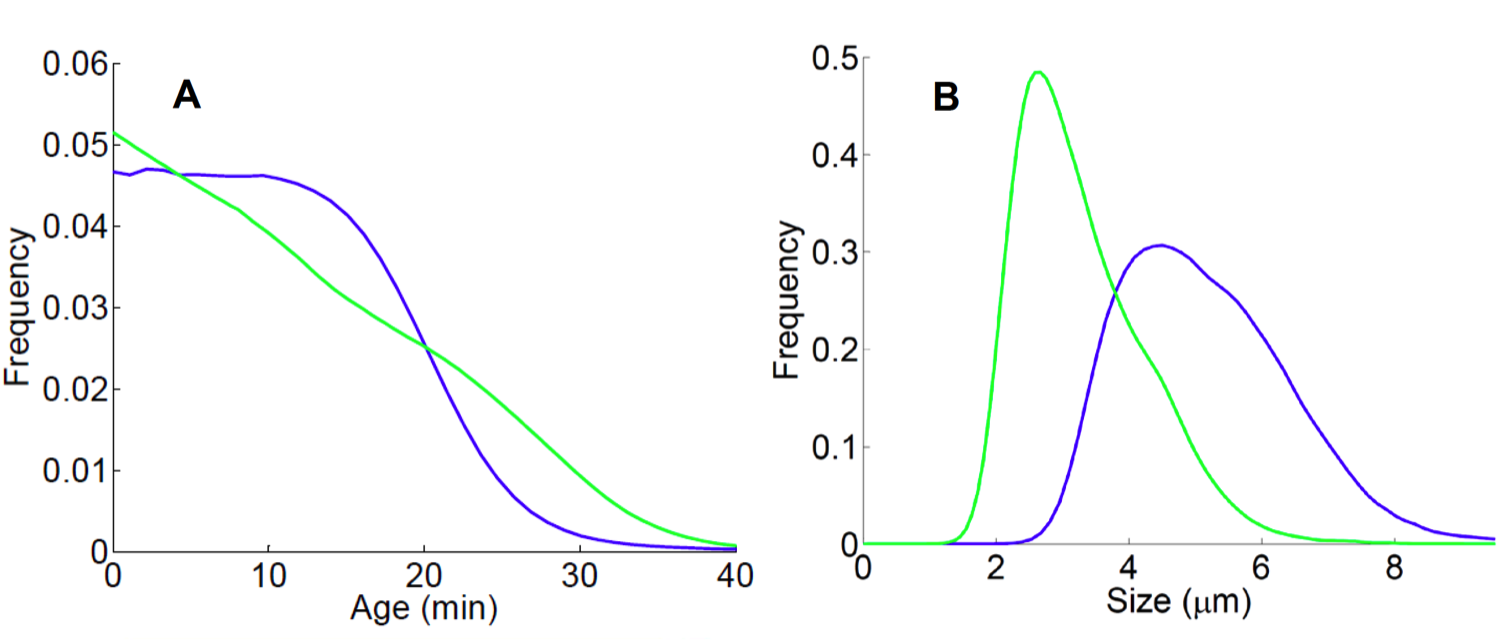}
\caption{ {\bf \emph{ All cells distributions:}} \emph{Kernel density estimation of age (Left) and length (Right) distributions obtained from sample images of genealogical observation (blue curves, data from~\protect\cite{Wang}) and population observation (green curves, data from~\protect\cite{Eric}), data taken at all time points. Figure taken from~\protect\cite{robert:hal-00981312}, Fig.1.\label{fig:distrib:all}} }
\end{center}
\end{figure}

\subsubsection*{Data analysis: Individual dynamics data analysis}

In the case of individual dynamics data collection, it is not only possible to extract size distribution but also much more. Let us list some of the information we can extract from such rich data.

To begin with, we can measure the cell age, {\it i.e.} the time elapsed since birth; or yet - let us mention it due to its recent importance in the field~\cite{ArielAmir,Jun_2015,si2019mechanistic} - size increment, {\it i.e.} the difference between the  size of the bacteria at the time considered and their size at birth. A similar process as seen above for size leads us to age or size increment distributions as illustrated in Fig.~\ref{fig:distrib:all} Left.
In the same vein, we can also select only dividing cells, measure their size, age, size increment at division, and estimate these distributions: this is illustrated in Fig.~\ref{fig:distrib:div}. Finally, we can measure joint distributions, such as age-size or size-increment of size distributions: this is done in Fig.~\ref{fig:distrib:2D}.

\begin{figure}
\begin{center}
\includegraphics[width=0.45\textwidth]{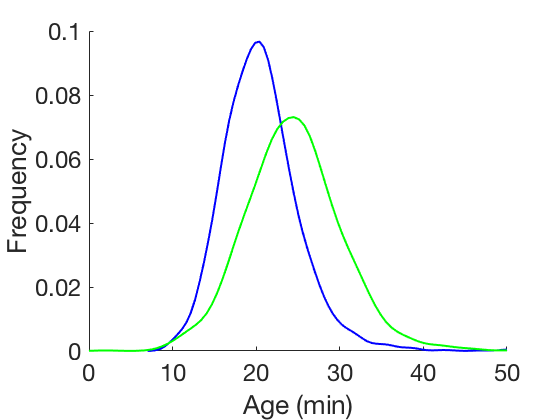}
\includegraphics[width=0.45\textwidth]{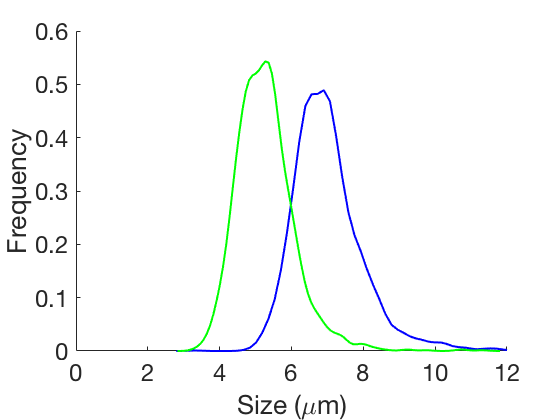}
\caption{\label{fig:distrib:div} {\it Dividing cells distributions: Kernel density estimation of age (Left) and length (Right) distributions of dividing cells obtained from sample images of genealogical observation (blue curves, data from~\protect\cite{Wang}) and population observation (green curves, data from~\protect\cite{Eric})}.}
\end{center}
\end{figure}

\begin{figure}
\begin{center}
\includegraphics[width=0.45\textwidth]{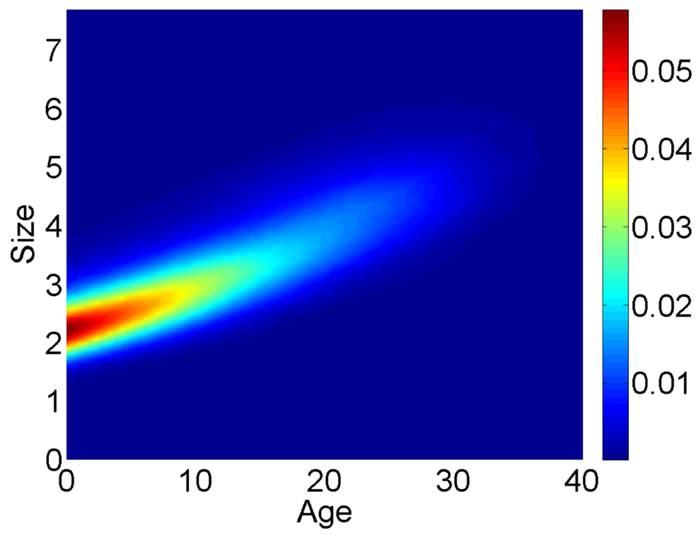}
\includegraphics[width=0.48\textwidth]{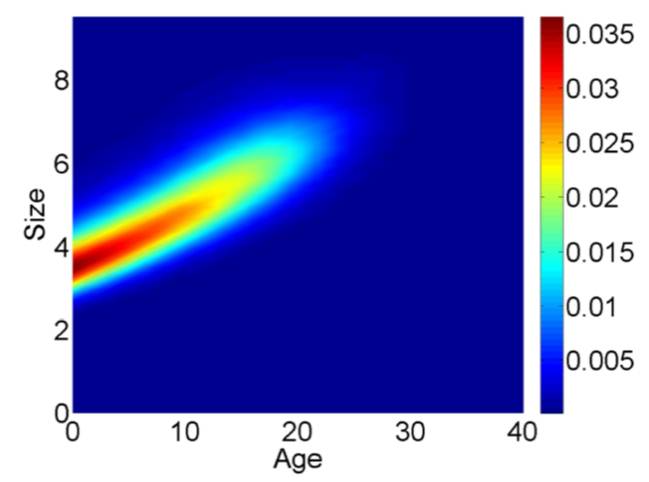}
\caption{\label{fig:distrib:2D} {\it Age-Size distributions for all cells. Left: population observation, data from~\protect\cite{Eric}, Right: genealogical observation, data from~\protect\cite{Wang}.}}
\end{center}
\end{figure}

Being able to follow each cell means that we are also able to estimate individual growth rates. This is illustrated for one given cell in Fig.~\ref{fig:growthrate}: measuring cell length and cell width every two minutes allows one to estimate its growth rate by curve fitting tools. On the right, we see width measurements: it appears to remain constant up to measurement noise. On the left, we see the fit of the data with an exponential curve in red and with a linear curve in black: as studied in~\cite{robert:hal-00981312}, and in accordance with previous studies, the exponential growth model, where we assume that the cell length evolves according to the law
$$\f{dx}{dt}=\kappa x$$ for a certain rate $\kappa >0$, fits the data very well - at least for the lengths where we have data, {\it i.e.} here for a range of sizes typical size between $0.3$ to $5\mu m$.

\begin{figure}
\begin{center}
\includegraphics[width=\textwidth]{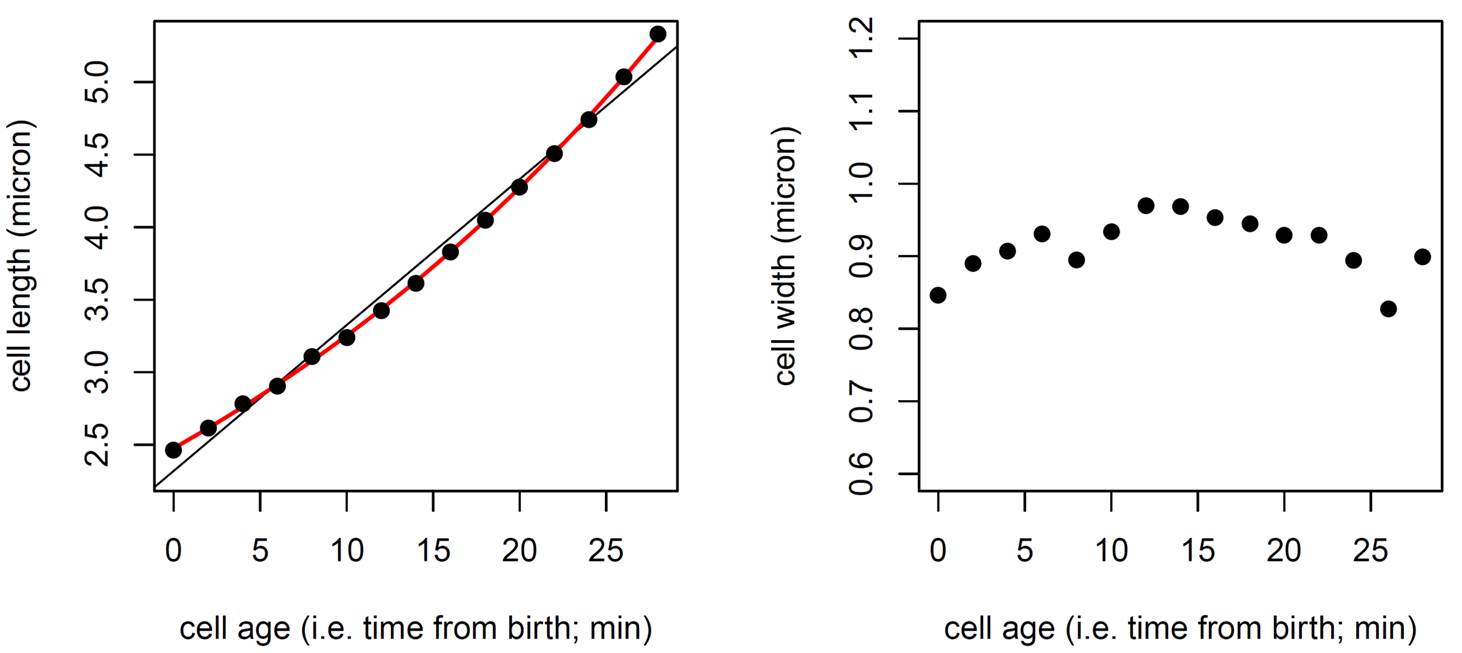}
\caption{\label{fig:growthrate} {\it Single-cell growth rate analysis. Figure taken from~\protect\cite{robert:hal-00981312} Figure~2.  For a given cell, we measure its size dimensions - length on the right, width on the left - through time, and conclude to a good agreement of the exponential growth model $\f{dx}{dt}=\kappa x$ for the length, and a constant width.}}
\end{center}
\end{figure}

\

But is this growth rate constant among all cells? As for the age or size distributions, once estimated for each cell, and assuming them independent (no heritability) and time-independent (no slowing down in growth due to lack of nutrient for instance), it is possible to study the distribution of growth rates. This is illustrated in Fig.~\ref{fig:distrib:growthrate}, Left.

Concerning division, we already mentioned that it is possible to extract dividing cells distribution. But we have more information: for instance, it is possible to measure the ratio between daughter and mother cells. 
In our application setting, this is called the septum position, the {\it septum} being the boundary formed between the two dividing cells.

\begin{figure}
\begin{center}
\includegraphics[width=0.45\textwidth]{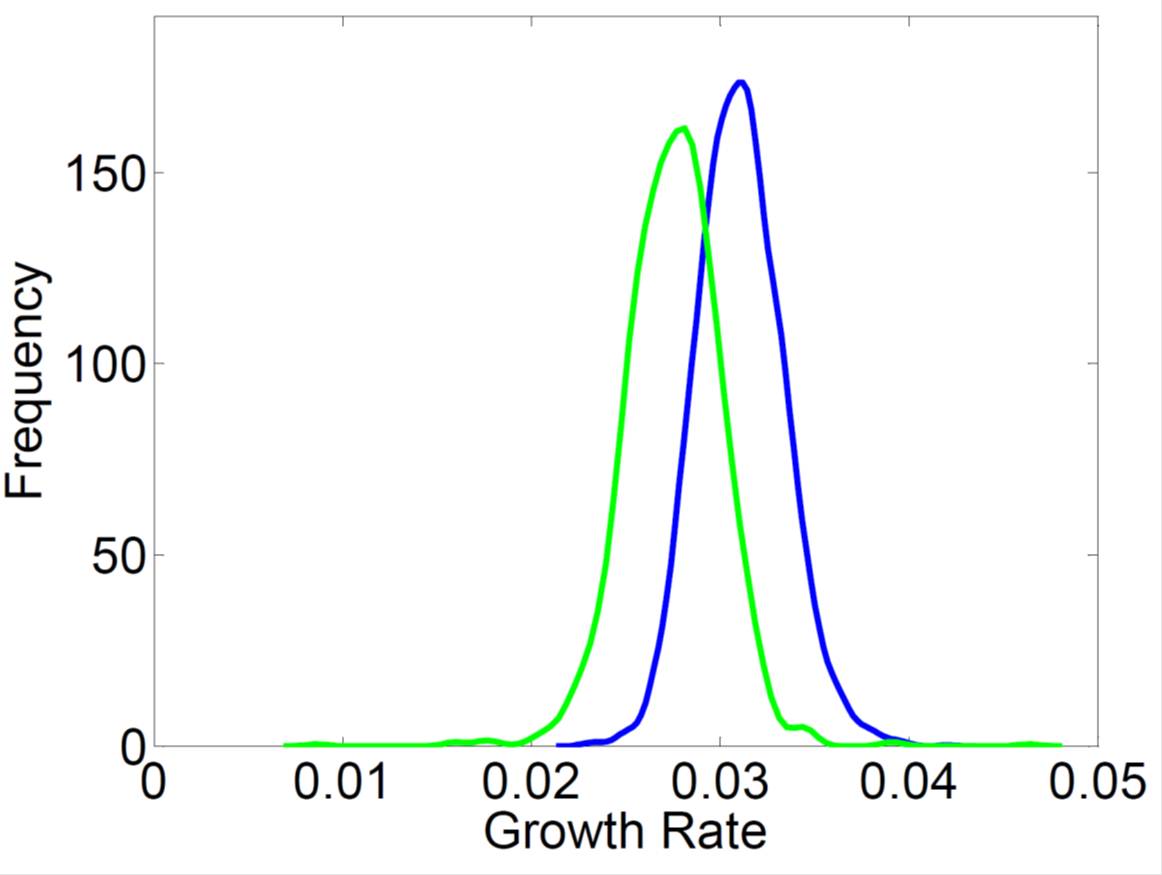}
\includegraphics[width=0.43\textwidth]{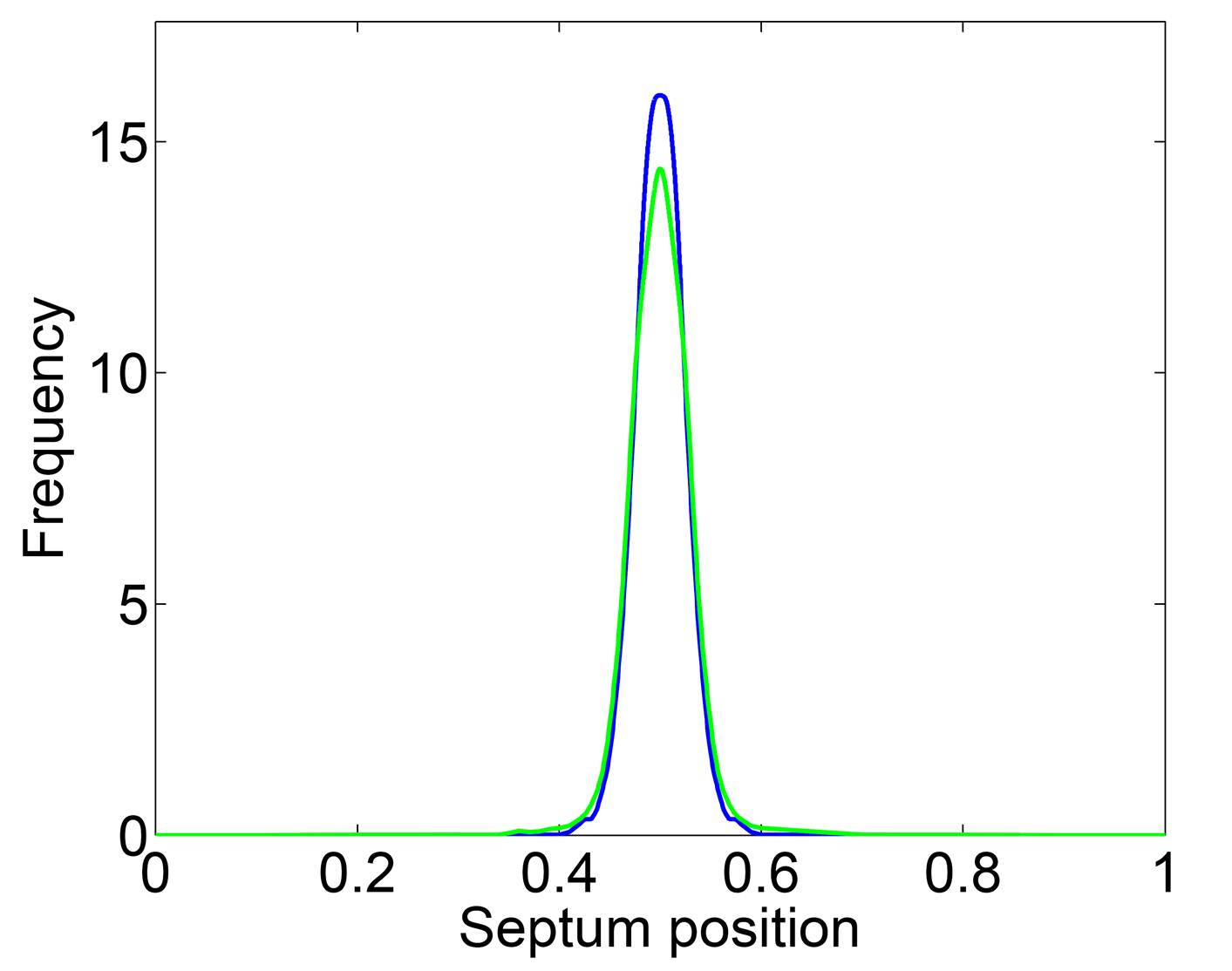}
\caption{{\it Growth rate in min$^{-1}$ (Left) and daughter/mother ratio (Right) distributions. Blue curves: genealogical data from~\protect\cite{Wang}, Green curves: population data from~\protect\cite{Eric}. Figure taken from Fig. S4 of~\protect\cite{robert:hal-00981312} \label{fig:distrib:growthrate}}.}
\end{center}
\end{figure}



\subsection{Second step: making assumptions}
\label{subsec:ass}
In the first step we have started to manipulate the data. It is clear that we could still get a lot of information from them: inheritance between mother and daughter cells, distributions over time and not just aggregating all the time data, etc.  Here and there, we already made two types of assumptions: model assumptions and simplifying assumptions.  Let us gather them here, so that we will be ready to design mathematical models.

Simplifying assumptions can be made out of the direct observations done during the first step. In our application case, we list the following assumptions which will be used as a departure point for the calibration step, see Section~\ref{sec:inverse}.
\begin{itemize}
\item The daughter cell size at birth is half its mother cell size (Fig.~\ref{fig:distrib:growthrate} Right shows very little variability: we may neglect it first).
\item All cells grow exponentially with the same growth rate $\kappa$ (Fig.~\ref{fig:distrib:growthrate} Left shows some variability, that can be neglected in a first approximation).
\item Space and nutrient consumption are infinite and do not influence growth and division (due to the experimental setting, this assumption may be verified by statistical analysis of time dynamics).
\end{itemize}
Model assumptions of course  depend on the underlying application. They are the gateway to the third step and a way to formulate the vague question of the introduction: how to determine laws for growth and division? In our case, a central assumption, linked to the Markov property of our model, is the absence of memory between mother and daughter cells, {\it i.e.} we assume no heritability of the growth rate - see~\cite{delyon2018investigation} for a thorough study of this question - and no heritability of the division features.
Concerning growth, in the case of individual dynamics data collection, we have seen that we can  formulate laws directly inferred from the data. This is not true concerning the law of division: the question "what triggers bacterial division?" remains unanswered by our data analysis step.  We thus formulate the following modelling assumptions. They provide guidelines for all our subsequent  models:

\begin{itemize}
 \item  A particle of age $a$ and size $x$ may divide with a division rate $B$ depending on its age,
\item  a particle of age $a$ and size $x$ may divide with a division rate $B$ depending on its  size,
\item a particle of size $x$ and size at birth $x_b$ may divide with a division rate $B$ depending on its increment of size $x-x_b$,
\item   a particle of size $x$ may divide with a division rate $B$ depending on an auxiliary (and latent, {\it i.e.} unobserved) variable.
\end{itemize}

\section{Building models}
\label{sec:model}

The preliminary steps sketched in Sections~\ref{subsec:datanal} and~\ref{subsec:ass}  allow one to have a clear idea on the necessary ingredients to translate mathematically the biological mechanisms  to study. We list three apparently different approaches:

\renewcommand{\labelenumi}{\arabic{enumi}.}
\begin{enumerate}
\item
{\bf Continuous-time branching processes:} the most direct and intuitive way is to model each cell of the population inside its genealogical tree, linking the parent to its offspring by a tree branch representing its lifetime, each node representing a cell taken at birth or at division. We explain this model below in Section~\ref{sec:model:branching}.

\item {\bf Stochastic differential equations (SDE) via Poisson random measures:} this formalism is strictly equivalent to the building of the branching tree and consists in writing a stochastic differential equation satisfied by a random measure representing all the cells alive at a certain time. The advantage of writing this equation is that it is a very convenient way to link the stochastic model to the "deterministic" - or, more adequately, average - approach described in Section~\ref{sec:model:PDE}. This limit is rigorously proved in Section~\ref{sec:model:random} in the pedagogical case of the renewal process, and we review results of the literature for other models.

\item {\bf Integro-partial differential equations (PDE)}: looking at a {\it large} population, or yet at the {\it average} behaviour of one or of a small number of individuals, we can write a balance equation satisfied by the concentration distribution of cells at time $t$, with given characteristics such as age, size, increment of size, etc: this gives rise to what is called a {\it structured population equation}, the term {\it structured} referring to this characteristic {\it trait} triggering growth, division, or more generally speaking evolution/change. This type of models is reviewed in Section~\ref{sec:model:PDE}.
\end{enumerate}

Depending on the context or on the specific questions to be solved, one of the three above points of view may seem more advantageous, either from a technical or an interpretation point of view. The three approaches are closely related and sometimes equivalent. We next describe somehow their minimal mathematical features and their links.

\subsection{Continuous-time branching processes}
\label{sec:model:branching}

Continuous-time branching processes are classical objects, well documented in numerous textbooks and papers, see {\it e.g.} \cite{lamperti1967continuous,kendall1966branching,harris1948branching,sevast1957limit} or \cite{haccou2005branching,meleard2009trait,champagnat2006unifying,champagnat2008individual}. Ref.~\cite{bansaye2011limit} is recommended for an efficient presentation of the topic.


The models we present here belong to the wide family of so-called "continuous-time branching trees", whose history dates back to 1873~\cite{kendall1966branching} and knows continuous interest from ecologists as well as mathematicians, see {\it e.g.} ~\cite{haccou2005branching} or for a specific and very recent example~\cite{bansaye2019scaling}. For the sake of simplicity, we stick here to the modelling and simplifying assumptions sketched above, so that our branching process is encoded in a binary tree: each node splits into exactly two branches. Using the Ulam-Neveu notation, we define
\begin{equation}\label{def:U}{\mathcal U} := \bigcup_{n=0}^\infty \{0,1\}^n\;\;\;\text{with}\;\;\{0,1\}^0:=\emptyset.
\end{equation}
\begin{figure}
\begin{center}
\includegraphics[width=0.45\textwidth]{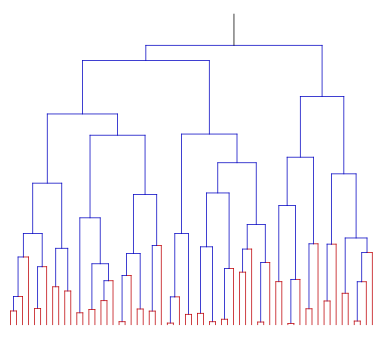}
\includegraphics[width=0.43\textwidth]{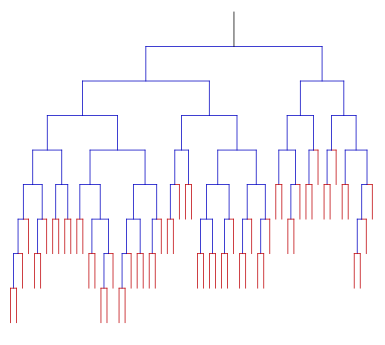}
\caption{{\it  Simulation of a binary tree with.  Left: the size of each segment represents the lifetime of an individual.  Individuals alive at  time $t$ are  represented  in  red.  Right:  genealogical  representation  of  the  same realisation of the tree. Figure taken from Fig. 1 of~\protect\cite{hoffmann2016nonparametric} \label{fig:UlamNeveu}.}}
\end{center}
\end{figure}
To each node $u \in {\mathcal U}$, we associate a cell with birth time $b_u$, size at birth  $\xi_u$, lifetime $\zeta_u$ and increment of size since birth $\eta_u$. Assuming that the size growth is given by the growth rate $\tau(x)$ (in the sequel, we will often specify  $\tau(x)=\kappa x$ for some common growth rate $\kappa >0$),
if $u^- \in \mathcal U$ denotes the parent of $u \in \mathcal U$, then the size at division is given by $\chi_u=X(\zeta_u,\xi_u)$ where $X(t,x)$ denotes the characteristic curve solution to the differential equation
$$\f{dX (t,x)}{dt}=\tau\left(X(t,x)\right), \qquad X(0,x)=x,$$
and we have, for a division probability kernel $b(\chi_u,dx)$
$$\xi_u \sim b(\chi_{u^-},dx), \qquad  \eta_{u}=\chi_u-\xi_u=X(\zeta_u,\xi_{u}) -\xi_u.$$
In the specific case of exponential growth and diagonal kernel (equal mitosis, for which $b(\chi_u,dx)=\delta_{\f{\chi_u}{2}}(dx)$), this gives
$$\xi_u =\f{\chi_{u^-}}{2}=\frac{ \xi_{u^-}}{2}\exp\big(\kappa \zeta_{u^-}\big),\qquad \eta_{u}=\xi_{u} \left(\exp(\kappa \zeta_u) -1\right).$$
We see that within such a formalism, it is straightforward to generalise our assumptions: for instance, the growth rate $\kappa$ could be selected randomly at birth, according to some probability law which could depend on the parent growth rate $\kappa_{u^-}$ (or on some other trait of the mother). Another way to model unequal division is to write
$$\xi_{(u^-,0)} =\alpha \xi_{u^-}\exp(\kappa \zeta_{u^-}),\qquad
\xi_{(u^-,1)}=(1-\alpha)\xi_{u^-}\exp(\kappa \zeta_{u^-}),$$
with $\alpha\in(0,1)$ chosen randomly according to some probability distribution $b_0 (d\alpha)$, symetric in $\f{1}{2}$. In such a case, the probability kernel $b$ has a specific form, which is sometimes called {\it self-similar}, namely
$$b(y,x)=\f{1}{y}b_0(\f{x}{y}).$$

\

One last ingredient is still missing to have fully determined the tree: the distribution of the random time at which division occurs. Let us list the three examples discussed in the assumptions section above:

\begin{enumerate}
\item {\bf The division depends on age:} this translates into the fact that given a division rate function
$$B:(0,\infty)\to [0,\infty),\;\;\int^\infty B(s)ds = \infty,$$
the lifetime $\zeta_u$ is a random variable with distribution
$$\PP(\zeta_u \in da) = B(a)e^{-\int_0^a B(s)ds}da.$$ Other said, we have
\begin{align*}
 \PP(\zeta_u \in (a,a+da) \vert \zeta_u\geq a)=B(a)da, \,
 \PP(\zeta_u \geq a) = \exp\Big(-\int_0^a B(s)ds\Big).
\end{align*}
With this construction, the renewal process is simply embedded into a branching tree representation. See \cite{hoffmann2016nonparametric} for a study of a slight generalisation of this model. We notice  that we can add other variables as the size and the increment of size, but they will have no influence on the tree.
\item {\bf The division depends on size:} when the division is triggered by a size-dependent rate, and when the size grows at a rate $\tau(x)$, the division rate function now translates into the following properties
$$
\begin{array}{ll}
\PP(\chi_u \in (x,x+dx) \vert \chi_u\geq x)=B(x)dx=\tau(x)B(x) dt,
\\ \\
 \PP(\chi_u\geq x \vert \xi_{u})=\1_{\{x\geq \xi_{u}\}} \exp\left(-\int_{\xi_{u}}^x B(y)dy\right).
 \end{array}
$$
Notice that the age variable $\zeta_u$, although well-defined, is a bit irrelevant in this context: this is due to the fact that  we  define  an instantaneous rate $B(x)dx$ instead of defining a rate $B(x)dt.$ Compared to previous mathematical papers~\cite{BP,MMP2,DG,DHKR} etc., we simply substitute  $B$ by $B\tau.$
\item {\bf The division depends on the increment of size since birth:} as for the renewal process, the increment is reset to zero at each division, however if the growth rate $\tau(x)$ is not constant it does not increase linearly with time, so that $\eta_u$ is now characterised by
$$
\begin{array}{ll}
\PP(\eta_u \in (\i,\i+d\i) \vert \eta_u\geq \i,\xi_u)=B(\i)d\i=B(\i)\tau(\xi_u +\i) dt,\\ \\
 \PP(\eta_u\geq \i )=\exp\left(-\int_0^{\i} B(y)dy\right).
 \end{array}
 $$
Indeed, for a size increment $\i$ we have a size since birth equal to $x=\xi_u + \i$, and the size grows  according to
$\f{dx}{dt} = d(\xi_u+\i)= d\i=\tau (x)dt$. When we embed the model in the time-continuous framework, we see that the lifetime does depend on size, contrarily to the renewal process, due to the fact that $d\i\neq dt$ formally.
\end{enumerate}

Given observational data, there are basically two ways of considering the tree: either we look at the process of structured  cells {\it until a certain generation} $n$ -  along one branch chosen at random at each node (genealogical observation), or along chosen branches, or yet along all of them; alternatively,  we look at the process {\it until a certain physical time} (population observation). In the first point of view, the physical time is not intrinsically important, contrarily to the second point of view, that we next adopt to describe the process via random measures.

\subsection{Random measures} \label{sec:model:random}

We consider the random process
$$X(t) = \big(X_1(t),X_2(t),\ldots\big)$$
that describes the (ordered) sizes of the population at time $t$, or
$$A(t) = \big(A_1(t),A_2(t),\ldots\big)$$
the (ordered) ages of the population at time $t$. Equivalently, we can define the random processes with values in finite-point measures on $(0,\infty)$ via
\begin{equation}
{ Z}_t^{(a)} = \sum_{i=1}^{\infty} \delta_{A_i(t)},\;\;
Z_t^{(x)} = \sum_{i=1}^{\infty} \delta_{X_i(t)}, \label{eq: random measure}
\end{equation}
(the sum is finite but the total number of particles may be different for different values of $t$), that describe the population states, characterised by the structuring variables (here size, age, size increment and so on) at any given time $t$.

\

We can then look for a characterisation of the process $(Z_t^{(a)})_{t \geq 0}$ or $(Z_t^{(x)})_{t \geq 0}$ as via a stochastic evolution equation, here a measure-valued stochastic differential equation (SDE). In order to do so, we need a technical tool, namely the use of Poisson random measures.


\subsubsection*{Preliminaries: Poisson random measures}

A convenient way to model scattered points or events through time and a state space is by means of a Poisson random measure. The notion and its properties are well documented in numerous textbooks, from stochastic geometry to stochastic calculus, see {\it e.g.} \cite{baccelli2020random,daley2003introduction,jacod2013limit} and we briefly recall the essential and basic material needed here.\\

We start with a state space $\mathcal X \subset \R^d$ and we let $\mu$ be some sigma-finite measure on $\mathcal X$ equipped with its Borel sigma-field. If $(x_i)_{i \in \mathcal I}$ is a countable family of elements of $\mathcal X$, the point measure $N$ associated with $(x_i)_{i \in \mathcal I}$ is given by
$$N(dx) = \sum_{i \in \mathcal I} \delta_{x_i}(dx),$$
and its  action on test functions $\varphi: \mathcal X \rightarrow \R$ is written as
$$\langle N,\varphi\rangle = \int_{\mathcal X}\varphi(x)N(dx) = \sum_{i \in \mathcal I}\varphi(x_i)$$
whenever the above sum is well-defined.
\begin{definition} A random point measure $N$ is a random Poisson measure on $\mathcal X$ with intensity $\mu$ if
\begin{enumerate}
\item For every Borel set $A$, the random variable $N(A)$ has a Poisson distribution with parameter $\mu(A)$:
$$\mathbb P(N(A) = k) = e^{-\mu(A)}\frac{\mu(A)^k}{k!},\;\;k=0,1,\ldots.$$
\item For every countable family of Borel sets $(A_i)_{i \in \mathcal I}$ with $A_i \cap A_j = \emptyset$, if $i\neq j$, the random variables $\big(N(A_i)\big)_{i \in \mathcal I}$ are independent.
\end{enumerate}
\end{definition}
Consider now a random measure $N$ on $\mathcal X = [0,\infty) \times [0,\infty)$ with intensity $dt \otimes dx$. A first natural process that can be easily constructed from $N$ is a so-called inhomogeneous
Poisson process with intensity function $\lambda : [0,\infty) \rightarrow [0,\infty)$. It models an event based process $(X_t)_{t \geq 0}$ that counts the events that occur between $0$ and $t$ according to the following two rules:
$$\mathbb P(\text{an event occurs in }\; (t,t+dt)\;|\;\text{given the history up to time}\; t) = \lambda(t)dt,$$
and $(X_t)_{t \geq 0}$ has independent increments: the random variables: $(X_{t_i+h_i}-X_{t_i})_{1 \leq i \leq n}$ are independent, for any family of disjoints sets $([t_i, t_i+h_i])_{1 \leq i \leq n}$.
A possible construction  is given by
$$X_t = \int_0^t \int_{[0,\infty)} \1_{\{u \leq \lambda(s)\}} N(ds,du).$$
To understand better the construction from a heuristic point of view, if $\lambda(s)$ is sufficiently smooth, for $s \in [t,t+dt]$, we can approximate $s \mapsto \lambda(s)$ by the constant function $s \mapsto \lambda(t)$ and thus obtain the chain of approximations
\begin{align*}
X_{t+dt}-X_t  & = \int_t^{t+dt} \int_0^{\lambda(s)}N(ds,du)
\\ \\
& \approx \int_t^{t+dt} \int_0^{\lambda(t)} N(ds,du)
 = N\big( [t,t+dt] \times [0,\lambda(t)]\big)
\end{align*}
which has a Poisson distribution of parameter $\lambda(t)dt$ since the intensity of $N$ is $dt \otimes ds$.  In particular, a standard Poisson process with parameter $\lambda >0$ can be represented as $X_t = \int_0^t \int_{[0,\infty)}\1_{\{u \leq \lambda\}}N(ds, du)$. This is the basic ingredient to construct more elaborate jump models.

\subsection*{The simplest age model}

We first prove the link between the deterministic and the stochastic model in the simplest age-dependent model: at time $t$, the age-structured cell population is described by the states (here the ages) of the living cells at time $t$, that we denote $(A_1(t), A_2(t),\ldots, A_n(t),\ldots)$ (given in the increasing order for instance), that we encode into the point measure
$$Z_t^{(a)} = Z_t(da) = \sum_{i = 1}^\infty \delta_{A_i(t)}(da),$$
where the sum ranges from $1$ to $\langle Z_t, \1\rangle$ and which is finite, as in \eqref{eq: random measure}. Assuming that the ages are ordered, we define the evaluation maps
$$a_i: Z_t \mapsto a_i(Z_t) = a_i\Big(\sum_{j = 1}^\infty \delta_{A_j(t)}(da)\Big) = A_i(t).$$
Abusing notation slightly, we may identify $A_i(t)$ and $a_i(Z_t)$. We have a complete description of the stochastic dynamics of $Z_t = Z^{(k)}_t$ by means of a family of independent Poisson random measures $N_i(ds,d\vartheta)$, $i=1, 2\ldots$ with intensity $ds \otimes d\vartheta$ on $[0,\infty) \times [0,\infty)$.
It is given by the stochastic evolution equations, for $k=1,2$:
\begin{equation} \label{eq: EDS age simple}
\begin{array}{ll}
Z_t^{(k)} &= \tau_t Z_0 \\ \\
&+ \int_0^t \sum_{i \leq \langle Z_{s-}^{(k)}, \1\rangle}\int_0^\infty (k\delta_{t-s}-\delta_{a_i(Z_{s-}^{(k)})+t-s})\1_{\{\vartheta \leq B(a_i(Z_{s-}^{(k)}))\}} N_{i}(ds, d\vartheta),
\end{array}
\end{equation}
where
$$Z_{0}^{(k)} = \sum_{i = 1}^{\langle Z_{0}^{(k)},\1\rangle}\delta_{A_{i}(0)}$$
is an initial age distribution and
$$\tau_t Z_{0}^{(k)} = \sum_{i = 1}^{\langle Z_{0}^{(k)},\1\rangle}\delta_{A_{i}(0)+t}.$$
We have $k=1$ for the genealogical observation case, where only one daughter cell is kept at each division, and $k=2$ for the population observation case.\\

We may interpret \eqref{eq: EDS age simple} as follows: at time $t$ the term $\tau_t Z_{0}^{(k)}$ accounts for the initial population that has aged by the amount $t$. This term must be corrected by:
\begin{itemize}
\item adding the ages of newborn cells. This is done as follows: a division event is run according to the rate $B(a_{i}(Z_{s-}^{(k)}))$, where $i$ ranges over the whole population at time $s$, (and then $s$ ranges from $0$ to $t$). When a division occurs, produced by the cell $i$ with age $a_{i}(Z_{s-}^{(k)})$, $k$ newborn cells are created with age $0$ at time $s$, that will have the same age $t-s$ at time $t$. We thus add to the system the term $k\delta_{t-s}$;
\item  removing the ages of the mother cells. This comes from the dying mother cell at time $s$ and age $a_{i}(Z_{s-}^{(k)})$, according to the same division event as for the addition of newborn cells. This cell would have age $a_{i}(Z_{s-}^{(k)})+t-s$ at time $t$, and therefore, we remove to the system the term $\delta_{a_{i}(Z_{s-}^{(k)})+t-s}$.
\end{itemize}
Whenever the integrator is a jump measure, we must be careful to consider integrands that are predictable in the sense of stochastic calculus, hence the left limits in terms involving $a_{i}(Z_{s-})$. This is innocuous at the informal level we keep here, but becomes crucial whenever martingales properties are involved, that are essential to properly define existence and uniqueness of a measure solution $(Z_{t}^{(k)})_{t \geq 0}$ to \eqref{eq: EDS age simple}.

Note that it is not completely obvious that a solution $(Z_{t}^{(k)})_{t \geq 0}$ to \eqref{eq: EDS age simple} exists, as well as its uniqueness. This is obtained by classical arguments and we refer to \cite{Tran2008,fournier2004microscopic} for a comprehensive treatment of the subject.


\subsubsection*{From the stochastic evolution \eqref{eq: EDS age simple} to a deterministic PDE}

Consider the following renewal equation:
\begin{equation} \label{eq: age edp}
\left\{\begin{array}{l}
     \f{\p}{\p t} n_{k} (t,a) + \f{\p}{\p a} n_{k}(t,a)
        +B(a) n_{k}(t,a)=0,\\  \\
    n_{k}(t,0)=k\int_0^\infty B(a)n_{k}(t,a)da, \\ \\
        n_{k}(0,a)=n^{in}(a), \qquad \int_0^\infty n^{in} (a)da=1,
    \end{array}\right.
\end{equation}
This is a classical model that goes back to McKendricks and von Foerster, see in particular the textbooks \cite{MetzDiekmann,BP}. If $n^{in}$ and $B$ are well-behaved  there is existence and weak uniqueness of \eqref{eq: age edp}, see Prop.~\ref{prop:exist:age}. Moreover, the solution $n_k(t,da) = n_k(t,a)da$ is absolutely continuous. Define for $t \geq 0$ the deterministic family of positive measures $(m_k(t))_{t \geq 0}$ via
$$\langle m_k(t,\cdot), \varphi \rangle = \int_0^\infty \varphi(a)m_k(t,da) = \mathbb E\big[\langle Z_t^{(k)},\varphi \rangle\big],$$
with $m_k(0,da) = n^{in}(a)da$.  The interpretation of $m$ is the mean, or macroscopic state of the system. The link between the stochastic evolution \eqref{eq: EDS age simple} and the deterministic  Fokker-Planck equation \eqref{eq: age edp} is given by the following simple equivalence:
\begin{proposition} \label{prop: first equivalence}
Assume that $B$ is continuous and bounded and that the initial condition $n^{in}$ is absolutely continuous, with a bounded continuous density. We then have $m_k=n_k$.
\end{proposition}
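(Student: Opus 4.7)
The plan is to check that, for every test function $\varphi \in C_c^1([0,\infty))$, both $t \mapsto \langle m_k(t), \varphi\rangle$ and $t \mapsto \langle n_k(t),\varphi\rangle$ satisfy the same linear Volterra integral equation, and then to invoke weak uniqueness (which is essentially the content of Prop.~\ref{prop:exist:age}). The whole argument rests on a preliminary moment estimate: since $\|B\|_\infty<\infty$, the total population $\langle Z_t^{(k)},\1\rangle$ is stochastically dominated by a Yule process of birth rate $k\|B\|_\infty$, so $\mathbb E[\langle Z_t^{(k)},\1\rangle]$ is finite and grows at most exponentially in $t$. This is where boundedness of $B$ really matters: without it, the jump integrals below would only be local martingales and the compensation formula could not be applied term by term.

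The first half of the argument is to test the SDE \eqref{eq: EDS age simple} against $\varphi$ and take expectations. Decomposing each $N_i$ into its compensator $ds\,d\vartheta$ plus the compensated measure $\widetilde N_i$, the integral against $\widetilde N_i$ is, thanks to the moment bound, a genuine martingale of zero mean, and Fubini yields the mild equation
\begin{align*}
\langle m_k(t),\varphi\rangle
&= \int_0^\infty \varphi(a+t)\,n^{in}(a)\,da \\
&\quad + \int_0^t \Big[k\varphi(t-s)\langle m_k(s),B\rangle - \langle m_k(s),\,B(\cdot)\varphi(\cdot+t-s)\rangle\Big]\,ds.
\end{align*}

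The second half is to derive the very same identity from the PDE \eqref{eq: age edp}. For fixed $t>0$, I introduce the characteristic-adapted auxiliary function $\psi(s,a) := \varphi(a+t-s)$ on $[0,t]\times[0,\infty)$, which is constant along the characteristics of the transport operator (so $\partial_s\psi+\partial_a\psi\equiv 0$) and satisfies $\psi(t,\cdot)=\varphi$, $\psi(0,a)=\varphi(a+t)$. Differentiating $s\mapsto \langle n_k(s),\psi(s,\cdot)\rangle$, substituting \eqref{eq: age edp}, integrating by parts in $a$, and using the renewal boundary condition $n_k(s,0)=k\langle n_k(s),B\rangle$ makes the $(\partial_s+\partial_a)$ contribution cancel and produces
$$\frac{d}{ds}\langle n_k(s),\psi(s,\cdot)\rangle = k\varphi(t-s)\langle n_k(s),B\rangle - \langle n_k(s),B(\cdot)\varphi(\cdot+t-s)\rangle;$$
integrating this on $[0,t]$ gives exactly the mild equation above for $\langle n_k(t),\varphi\rangle$.

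It only remains to conclude by uniqueness. The mild equation is linear Volterra in $t$ with a bounded kernel (controlled by $\|B\|_\infty\|\varphi\|_\infty$), so a standard Gronwall / Picard argument on each compact interval $[0,T]$ forces at most one locally bounded measure-valued solution. Since both $m_k(t,da)$ and $n_k(t,a)\,da$ solve it with the same initial datum $n^{in}(a)\,da$, we conclude $m_k(t,\cdot) = n_k(t,\cdot)\,da$ for every $t\geq 0$. The only real technical obstacle along the way is the first step — ensuring that the jump stochastic integral is a true martingale and not merely a local one — which is precisely what the Yule comparison together with the boundedness of $B$ provides; everything else is the standard compensation formula on the stochastic side and a routine characteristic computation on the PDE side.
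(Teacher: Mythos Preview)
Your argument is correct and shares with the paper the essential step: compensate the Poisson integrals in \eqref{eq: EDS age simple}, use the moment bound (your Yule comparison, which the paper leaves implicit) to upgrade the local martingale to a true one, and take expectations. Where the two routes diverge is in the target equation. The paper tests against \emph{time-dependent} test functions $\varphi(t,a)$, expands each term along characteristics, and arrives at the differential weak formulation
\[
\langle m_k(t),\varphi_t\rangle = \langle m_k(0),\varphi_0\rangle + \int_0^t \big\langle \partial_s\varphi_s + \partial_a\varphi_s + (k\varphi_s(0)-\varphi_s)B,\ m_k(s)\big\rangle\,ds,
\]
then argues (its Step~3) that this is precisely the weak form of \eqref{eq: age edp} and invokes PDE uniqueness. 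You instead keep an age-only $\varphi$, land directly on the mild (Duhamel) equation, and run the characteristic computation with $\psi(s,a)=\varphi(a+t-s)$ on the PDE side to show that $n_k$ satisfies the same integral identity. Your route is a bit more symmetric and sidesteps the need to know that $m_k(t,da)$ has a density, which the paper has to assume in its Step~3. The one place where your write-up is terse is the uniqueness step: the mild equation for a fixed $\varphi$ is not closed in $\langle m_k(\cdot),\varphi\rangle$ alone (the right-hand side involves $\langle m_k(s),B\rangle$ and $\langle m_k(s),B\varphi(\cdot+t-s)\rangle$), so to make the Gronwall argument airtight you should take the supremum over $\|\varphi\|_\infty\le 1$ and obtain $\|m_k(t)-n_k(t)\|_{TV}\le (k+1)\|B\|_\infty\int_0^t\|m_k(s)-n_k(s)\|_{TV}\,ds$; this is where the boundedness of $B$ is used a second time, and it is essentially the contraction in the proof of Prop.~\ref{prop:exist:age}.
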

These conditions are not minimal. We detail the essential steps of the proof of Proposition \ref{prop: first equivalence}, since it illustrates in a relatively simple framework the interplay between deterministic and stochastic methods. The other equivalence results between random measure evolution equations and structured population equations stated in Section \ref{sec: structured pop eq} are obtained in the same way.\\

\begin{proof}
\noindent {\it Step 1) The action of time and age test functions \eqref{eq: EDS age simple}}.
Let $\varphi_t(a) = \varphi(t,a)$ denote a test function (assumed to be smooth and compactly supported for safety). First, by definition of \eqref{eq: EDS age simple},
\begin{align}
\langle Z_t^{(k)},\varphi_t\rangle & = \langle \tau_t Z_0^{(k)},\varphi_t\rangle \nonumber 
+ \int_0^t \sum_{i \leq \langle Z_{s-}^{(k)}, \1\rangle}\int_0^\infty \\ &\big(k\varphi_t(t-s)-\varphi_t(a_i(Z_{s-}^{(k)})+t-s)\big)\1_{\{\vartheta \leq B(a_i(Z_{s-}^{(k)}))\}} N_i(ds,d\vartheta).  \label{eq: test}
\end{align}
Next, for every $0 \leq s \leq t$, we have
$$\varphi_t(a+t-s) = \varphi_s(a)+\int_s^t  \big(\tfrac{\p}{\p u}\varphi_u(a+s-u) + \tfrac{\p}{\p a}\varphi_u(a+s-u)\big)du.$$
Therefore, successively
\begin{align*}
& \langle \tau_t Z_0^{(k)},\varphi_t\rangle  = \sum_{i = 1}^{\langle Z_0^{(k)},\1\rangle} \big(\varphi_0(a_i(Z_0^{(k)}))
\\
& + \int_0^t  \big(\tfrac{\p}{\p s}\varphi_s(a_i(Z_0^{()k})+s) + \tfrac{\p}{\p a}\varphi_s(a_i(Z_0^{(k)})+s)\big)ds\big), \\
& \varphi_t(t-s)  = \varphi_s(0)+\int_s^t  \big(\tfrac{\p}{\p u}\varphi_u(u-s) + \tfrac{\p}{\p a}\varphi_u(u-s)\big)du, \\
 &\varphi_t(a_i(Z_{s-}^{(k)})+t-s)  = \varphi_s(a_i(Z_{s-}^{(k)}))+\\
 & \hspace{3cm}\int_s^t  \big(\tfrac{\p}{\p u}\varphi_u(a_i(Z_{s-}^{(k)})+u-s)  \tfrac{\p}{\p a}\varphi_u(a_i(Z_{s-}^{(k)})+u-s)\big)du.
\end{align*}
Plugging-in these three expansions in \eqref{eq: test}, and using the definition of $Z_s^{(k)}$ again, we obtain
\begin{align*}
\langle Z_t^{(k)},\varphi_t\rangle  & = \langle Z_0^{(k)},\varphi_0\rangle 
+ \int_0^t \big\langle \tfrac{\p}{\p s}\varphi_s + \tfrac{\p}{\p a}\varphi_s, Z_s^{(k)}\big\rangle ds
\\
&+\int_0^t \sum_{i \leq \langle Z_{s-}^{(k)}, \1\rangle}\int_0^\infty\big(k\varphi_s(0)+ \varphi_s(a_i(Z_{s-}^{(k)}))\big)\1_{\{\vartheta \leq B(a_i(Z_{s-}^{(k)}))\}} N_i(ds,d\vartheta) .
\end{align*}

\noindent {\it Step 2) A martingale-oriented representation of \eqref{eq: EDS age simple}}. We compensate the Poisson random measures $N_i()ds,d\vartheta$ and define
$$\widetilde N_i(ds,d\vartheta) = N(ds,d\vartheta)-ds \otimes d\vartheta.$$
For fixed $\varphi$, by construction, the random process
$$M_t(\varphi) = \int_0^t \sum_{i \leq \langle Z_{s-}^{(k)}, \1\rangle}\int_0^\infty\big(k\varphi_s(0)+ \varphi_s(a_i(Z_{s-}^{(k)}))\big)\1_{\{\vartheta \leq B(a_i(Z_{s-}^{(k)}))\}} \widetilde N_i(ds,d\vartheta)$$
is a centred martingale, and in particular $\mathbb E[M_t(\varphi)] = 0$. Noticing that
\begin{align*}\sum_{i \leq \langle Z_{s-}^{(k)}, \1\rangle}\int_0^\infty\big(k\varphi_s(0)+ \varphi_s(a_i(Z_{s-}^{(k)}))\big)\1_{\{\vartheta \leq B(a_i(Z_{s-}^{(k)}))\}}ds d\vartheta 
\\ = \big\langle  Z_s^{(k)}, (k\varphi_s(0)-\varphi_s)B\big\rangle,
\end{align*}
we obtain from Step 1 the representation
$$\langle Z_t^{(k)},\varphi_t\rangle =  \langle Z_0^{(k)},\varphi_0\rangle +\int_0^t \big\langle \tfrac{\p}{\p s}\varphi_s + \tfrac{\p}{\p a}\varphi_s + (k\varphi_s(0)-\varphi_s)B, Z_s^{(k)}\big\rangle ds + M_t(\varphi).$$
Taking expectation and applying Fubini's theorem, we derive
\begin{equation} \label{eq: rep after mg}
\langle m_k(t,\cdot),\varphi_t\rangle =  \langle m_k(0,\cdot),\varphi_0\rangle +\int_0^t \big\langle \tfrac{\p}{\p s}\varphi_s + \tfrac{\p}{\p a}\varphi_s + (k\varphi_s(0)-\varphi_s)B, m_k(s,\cdot)\big\rangle ds.
\end{equation}
\noindent {\it Step 3: From \eqref{eq: rep after mg} to \eqref{eq: age edp}}. We finally prove that \eqref{eq: rep after mg} and \eqref{eq: age edp} are equivalent, along a classical line of arguments: we take for granted that $m_k(t,da) = m_k(t,a)da$ is absolutely continuous. This is a consequence of the smoothness of the initial condition and of $B$, see Prop.~\ref{prop:exist:age}. We refer to \cite{BP} for transport equations or to \cite{Tran2008} for a probabilist point of view. We evaluate its time derivative against a test function $\phi$ that depends on age only using \eqref{eq: rep after mg}. We obtain, for a nice enough $\phi$ so that we can interchange integral and derivative:
$$\tfrac{d}{dt}\int_0^\infty \phi(a)m_k(t,a)da = \int_0^\infty \big(\tfrac{\p}{\p a}\phi(a)+(k\phi(0)-\phi(a))B(a)\big)m_k(t,a)da.$$
Assume now that $\phi$ is compactly supported and smooth, and that moreover $\phi(0)=0$. Integrating by part
$$\int_0^\infty  \big(\tfrac{\p}{\p a}\phi(a)m_k(t,a)da
=  -\int_0^\infty \phi(a)\tfrac{\p}{\p a}m_k(t,a)da.$$
We infer that for every such $\phi$:
$\tfrac{d}{dt}\langle m_k(t,\cdot),\phi\rangle = - \int_0^\infty \big(\f{\p}{\p a}m_k(t,a)+B(a)\big)m_k(t,a)\big)\phi(a)da$ and therefore
$$\tfrac{\p}{\p t}m_k(t,a)+\tfrac{\p}{\p a}m_k(t,a)+B(a)m(t,a) = 0\;\;da-{\text{almost everywhere}}$$
using one more time that $\phi(0)=0$ to eliminate the term involving $k\phi(0)$.
Now, consider an arbitrary test function $\phi$ vanishing at infinity: from the previous computation, we have
\begin{align*}0 &= \int_0^\infty  \big(\tfrac{\p}{\p t}m_k(t,a)+\f{\p}{\p a}m_k(t,a)\big)\phi(a)da 
\\
&= -\phi(0)m_k(t,0)+ k\phi(0)\int_0^\infty m_k(t,a)B(a)da
\end{align*}
and simplifying, we obtain the boundary condition $m_k(t,0) = k\int_0^\infty B(a)m_k(t,a)$, as expected. This completes the proof.
\end{proof}

\subsection{Structured population equations} \label{sec: structured pop eq}
\label{sec:model:PDE}

In the previous subsection, we have seen a rigorous derivation of the renewal equation from the stochastic differential equation. The same proof can be done in more intricate situations. For specific examples, we now give below the stochastic equation followed by the corresponding PDE, and the references for their rigorous derivation as well as some examples of possible generalisations.

\subsubsection*{Size-structured model: the growth-fragmentation process \& equation}
We structure the cell population according to the individual sizes $X_i(t)$ of each individual present at time $t$, encoded into the random point measure
$$Z_t^{(k)}(dx) = \sum_{i = 1}^{\infty}\delta_{X_i(t)}(dx),$$
with values in $(0,\infty)$, the state space of sizes, and $k=1$ or $2$ refers to the number of cells that are kept into the system at division. The sum ranges from $1$ to $\langle Z_t^{(k)}, \1\rangle $. Assuming that the sizes are ordered, the evaluation maps are defined as
$$x_i(Z_t^{(k)}) = x_i\Big(\sum_{j = 1}^\infty  \delta_{X_j(t)}(dx)\Big) = X_i(t).$$
In analogy to the age model described in the previous section, we have a stochastic evolution equation for the process $(Z_t^{(k)})_{t \geq 0}$:
\begin{align} \label{eq: EDS size simple}
&Z_t^{(k)} = \exp(\kappa t) Z_0 + \int_0^t \sum_{i \leq \langle Z_{s-}^{(k)}, \1\rangle}\int_0^\infty  \nonumber \\
&(k\delta_{\tfrac{1}{2}x_i(Z_{s-}^{(k)})\exp(\kappa (t-s))}-\delta_{x_i(Z_{s-}^{(k)})\exp(\kappa (t-s))})\1_{\{\vartheta \leq B(x_i(Z_{s-}^{(k)}))\}} N_i(ds,d\vartheta),
\end{align}
where, abusing notation slightly, for a random point measure $Z(dx) = \sum_{i = 1}^\infty\delta_{z_i}(dx)$, and a real valued function $\phi$, we set
$\phi(Z)(dx) = \sum_{i  = 1}^{\infty} \delta_{\phi(z_i)}(dx)$, so that $\exp(\kappa t) Z_0=\sum \delta_{X_i(0)e^{\kappa t}}$.
Set, for (regular compactly supported) $\varphi$
$$\langle n_k(t,\cdot), \varphi \rangle = \E\big[\langle Z_t,\varphi\rangle\big],$$
We have (in a weak sense)
\begin{equation}\label{eq: EDP size simple}
\frac{\partial}{\partial t} n_k(t,x) + \frac{\partial}{\partial x}\big(\kappa x\, n_k(t,x)\big) + B(x)n_k(t,x) = 2kB(2x)n_k(t,2x).
\end{equation}

The fact that the measure $\E\big[\langle Z_t,\cdot\rangle\big]$ solves \eqref{eq: EDP size simple} can be obtained along the same line or arguments as in Proposition \ref{prop: first equivalence}. An alternative proof related on fragmentation processes, following the tagged fragment technique developed for instance in \cite{bertoin2006random,haas2003loss}, can be found in \cite{DHKR}, for a more general model allowing variability in the growth rate $\kappa$.

\subsubsection*{Adder model: the incremental process \& structured equation}

We structure the model in the pair of traits $(x,z)$ where $x$ denotes the size of a cell and $z$ its size increment since its birth. We obtain a random measure
$$Z_t^{\iota}(dx,dz) = \sum_{i = 1}^\infty \delta_{(X_i(t), Z_i(t))}(dx,dz),$$
where $(X_1(t), X_2(t) \ldots) $ denotes the (ordered) size of each cell present in the system at time $t$ (being born before $t$ or course) and $Z_i(t)$ denotes the size increment of the cell with size $X_i^{b}$ at birth. With these notation, the size $X_i(t)$ of a cell present in the system at time $t$ is simply
$$X_i(t) = X_i^{b}+Z_i(t).$$
With the evaluation mappings
$$(x_i,z_i)(Z_t^\iota) = (x_i,z_i)\Big(\sum_{j = 1}^\infty \delta_{(X_j(t), Z_j(t))}(dx,dx)\Big) = (X_i(t), Z_i(t)),$$
the stochastic evolution for the measure-valued process $(Z_t^\iota)_{t \geq 0}$ is given by
\begin{align}
Z_t^\iota & = Z_0^\iota \exp(\kappa t) \nonumber \\
&+ \int_0^t \sum_{i \leq \langle Z_{s-}^\iota, \1\rangle}\int_0^\infty \Big(k\delta_{\big(\tfrac{1}{2}x_i(Z_{s-}^\iota)\exp(\kappa(t-s)), \tfrac{1}{2}x_i(Z_{s-}^\iota)(\mathrm{e}^{\kappa(t-s)}-1)\big)}  \nonumber\\
&\hspace{0.3cm}-\delta_{\big(x_i(Z_{s-}^\iota)\mathrm{e}^{\kappa(t-s)},x_i(Z_{s-}^\iota)\mathrm{e}^{\kappa (t-s)}-(x_i(Z_{s-}^\iota)-z_i(Z_{s-}^\iota))\big)} \Big) \nonumber \\
&\hspace{0.8cm} \times 
\1_{\{\vartheta \leq \kappa x_i(Z_{s-}^\iota) B(z_i(Z_{s-}^\iota))\}} N_i(ds,d\vartheta), \label{eq: EDS adder simple}
\end{align}
where 
$$Z_0^\iota \exp(\kappa t)  = \sum_{i = 1}^\infty \delta_{\big(x_i(Z_0^\iota)\exp(\kappa t), x_i(Z_0^\iota)\exp(\kappa t)-(x_i(Z_0^\iota)-z_i(Z_0^\iota))\big)}(dx,dz),$$
and similarly, set for (regular compactly supported) $\varphi$ valued in $(0,\infty)\times (0,\infty)$
$$\langle n_k(t,\cdot,\cdot), \varphi \rangle := \E\big[\langle Z_t,\varphi\rangle\big],$$
We have (in a weak sense)
\begin{eqnarray}\label{eq: EDP adder simple}\f{\p}{\p t}  n_k + \f{\p}{\partial \i} (\kappa x n_k)+ \f{\p}{\partial x} \big(\kappa x n_k\big)  = - \kappa x B(\i)n_k(t,\i,x) ,
\\
\nonumber \\
\kappa x n_k(t,0,x)= 4k \kappa x \int_0^\infty B(\i) n_k(t,\i,2x)d\i, \label{eq: EDP adder simple bound}
\end{eqnarray}
 with $n_k(0,\i,x)= n^{(0)}(\i,x)$, $0\leq \i\leq x$.

\subsubsection*{Discussion on some generalisations}
We can follow two variables, one behaving like a physiological age, {\it i.e.} reset at zero at each division, but which evolves with a non constant rate $\tau_z(\i,x)$; the other behaving like a size, {\it i.e.} which is conserved by division through a fragmentation kernel $b(y,dx)$ such that $\int_0^y b(y,dx)=1,$ and growing at a rate $\tau_x(x,\kappa),$ with a parameter $\kappa$ chosen at birth according to the rate $\kappa'$ of the mother along a probability kernel $\theta(\kappa',d\kappa)$. We may assume that the division rate depends on size, growth rate and physiological age, and denote it $\beta(\i,x,\kappa)$; we write it as a time instantaneous rate, contrarily to the previous notations where $B$ is an age or a size instantaneous rate, thus multiplied by the corresponding age or size growth rate. 
%
We expect the mean measure $n_k = \E[\langle Z_t,\cdot\rangle]$ of the corresponding population process to solve a PDE of the form
\begin{align}
\label{eq: EDP gene}
&\f{\p}{\p t}  n_k + \f{\p}{\partial \i} \left(\tau_{\i}(\i,x) n_{k}\right)+ \f{\p}{\partial x} \left(\tau_{x}(x,\kappa) n_k\right)   = -\beta(\i,x,\kappa)n_{k}(t,\i,x,\kappa),
\\ \nonumber 
\\  \label{eq: EDP gene bound}
&\tau_x (0,\kappa)n_k(z,0,\kappa)=0,
\qquad \tau_{\i}(0,\kappa) n_{k}(t,0,x,\kappa)  =
\\ \nonumber \\
&\qquad 2k \int_0^\infty \int_0^\infty \int_0^\infty \theta(\kappa',\kappa) b(y,x)\beta(\i,y,\kappa')  n_{k}(t,\i,y,\kappa')d\i dyd\kappa'.\nonumber
\end{align}
We recover the age model by taking $\beta(\i,x,\kappa)=B(\i),$ $\tau_z\equiv 1,$ and integrating in size and growth rate; the size model with variable growth rate and unequal fragmentation by taking $\beta(\i,x,\kappa)=\tau_{x}(x,\kappa)B(x)$ and integrating in $\i$; the increment model with variable growth rate by taking $\tau_{\i}=\tau_{x}$, $\beta(\i,x,\kappa)=\tau_{x} (x,\kappa)B(\i).$
A detailed study  lies beyond the level intended in these notes but the interest is to embed all the models considered here in a common framework.

\section{Model analysis: long-time behaviour}
\label{sec:anal}
Having  built kinetic models leads naturally mathematicians towards the question of their long-time behaviour. In this matter, this is far from being a pure mathematical question: it reveals the cornerstone of our calibration strategy, developed in Section~\ref{sec:inverse}. It is however a whole field in itself: as in Section~\ref{sec:model}, we provide the main ingredients in the simplest case of the renewal equation and process - which already reveals  not so simple when we study the stochastic population model - and review - not exhaustively - the extremely rich literature for more involved cases.

 Importantly, we only consider here linear models, \emph{i.e.} we neglect feedbacks or exchanges with the environment or between the cells; we only mention a few nonlinear results. When linear, the study of the asymptotic behaviour of the equations are closely related to the spectral analysis of the related semigroup operator, and leads to three main types of behaviours:
 \begin{itemize}
 \item convergence to a steady state (exponentially fast in case of a spectral gap), which happens for the conservative equations (case $k=1$: genealogical observation);
 \item steady exponential growth, {\it i.e.} there is a decoupling between an exponential growth at the rate of the dominant eigenvalue, called the {\it Malthusian parameter} or {\it fitness} of the population, and a steady distribution in the structuring variables. This happens in the case $k=2$, where all the population is followed. As for the conservative case, an exponentially-fast trend to this steady behaviour is linked to a spectral gap.
 \item Other non physical behaviours, for instance spreading in the distributions, or yet trend to a permanently oscillating system, together with exponential growth in the case $k=2.$ This last case is a non-robust behaviour, linked to some degeneracy in the coefficients, so that the dissipation of entropy is not sufficient for a trend to a steady behaviour to emerge; it can also be seen as another type of convergence towards the dominant eigenvector, when this one becomes non unique. Whatsoever, this last behaviour, in the case of linear equations, is a mathematical curiosity rather than biologically informative. The interest of proving such non physical results consists in excluding some models or assumptions as non realistic if they lead to such non physical results.
 \end{itemize}
These considerations drive our assumptions: they need to ensure the convergence of both the conservative ($k=1$) and the nonconservative ($k=2$) equations towards steady behaviours.

\subsection{The renewal equation: a pedagogical example}
\label{subsec:anal:age}
This is historically the first structured-population model to be studied~\cite{Kermack1,MetzDiekmann}; another key reference is \cite{harris1948branching} around 1950 and later the celebrated textbook book by Harris \cite{harris1963theory}. There are strong links with the classical renewal theory of random walks in probability, that dates back to classics: W. Feller, J. Doob, A. Lotka \cite{feller2015integral,doob1948renewal,lotka1948application}.  A recent account of the link between fragmentation processes and renewal theory can be found in the textbook by J. Bertoin \cite{bertoin2006random}.



Let us denote the renewal equation in one of its simplest form:
\begin{equation}
\left\{
\begin{array}{ll}
\f{\p}{\p t} n(t,a) + \f{\p}{\p a} n(t,a) = - B(a) n(t,a),
\\ \\
n(t,0)=k \int_0^\infty B(a)n(t,a)da, \qquad n(0,a)=n^0(a).
\end{array}\right.
\label{eq:renewal}
\end{equation}
This equation has to be understood first in a weak sense, {\it i.e.} as an equivalent way to write~\eqref{eq: age edp}
\subsubsection*{Functional spaces through the lense of modelling}

To give a rigorous meaning to this equation, either in a weak or strong formulation, one first needs to decide in which functional space - in the structuring variable (age here), then time - the solution $n$ should be considered.

Closer to the stochastic process and to non-asymptotic or not-averaged populations are measure-valued solutions, see the recent and very pedagogical approach of P. Gabriel~\cite{gabriel2018measure} for the case $k=1$, or~\cite{gwiazda2016generalized}: this allows one to consider measure-valued initial conditions for $n^0.$ 

Convenient to handle the inverse problem or to use Fourier or Mellin transforms are $L^2-$ types spaces, see e.g.~\cite{PZ}, but they lack physical interpretation.  Assuming however that biological populations are expected to be in both $L^1$ and $L^\infty,$ this is acceptable, for mathematical reasons, to choose such spaces.

Finally, weighted $L^1$ spaces have been widely used, for  semi-group approaches~\cite{MetzDiekmann} as well as for general relative entropy~\cite{BP} or still others~\cite{gwiazda2006invariants}. They  have the advantage of immediate physical interpretation, since $\int n(t,a)da$ and $\int B(a)n(t,a)da$ represent respectively (the expectation of) the total number of individuals and the total number of dividing individuals at time $t$, $\int a n(t,a)da$ the average age of individuals at time $t$, etc. Moreover, assuming $n(t,\cdot) \in L^1((0,\infty))$ means that the expectation of the stochastic measure has a density and, up to a renormalization if $k=2,$ is a probability density. At least for large times, we consider that this is relevant in a modelling perspective, and thus priviledge this last family of functional spaces.

Let us briefly recall  how the main results may be found using the entropy approach developed in~\cite{mischler2002stability}, and discuss the links between the cases $k=0$ and $k=1$ - we refer to~\cite{BP}, chapter 2 for a complete presentation.

We assume
\begin{equation}
\label{as:B}
\begin{array}{c}
B\in L^\infty_{loc}\left((0,\infty),[0,\infty)\right),
\\ \\ \int^\infty B(x)dx =\infty, \qquad \int_0^\infty x B(x)e^{-\int_0^x B(a)da} dx <\infty.
\end{array}
\end{equation}
These assumptions have a stochastic interpretation: if $A$ is a random variable representing the age at division of a cell, the fact that $\int_0^\infty B(x)dx=\infty$ ensures that all cells divide. The probability density of $A$ being the function $x\mapsto B(x)e^{-\int_0^x B(a)da},$ the last assumption means that $\E[A] < \infty.$
\subsubsection*{Eigenelements}
As said in the introduction, the asymptotic behaviour is closely linked to the study of the spectrum of the linear operator under consideration: we may formally write
$$\f{d}{dt} n(t,\cdot)={\cal A}_k n(t,\cdot)$$
where ${\cal A}_k$ is defined by~\eqref{eq:renewal}. Eigenelements $(\lambda_k,N_k,\varphi_k)$ are solutions to the equations
$$ {\cal A}_k N_k =\lambda_k N_k , \qquad
{\cal A}_k^* \varphi_k  =\lambda_k \varphi_k ,$$
where ${\cal A}_k^*$ is the adjoint operator to ${\cal A}_k,$ defined by the following adjoint equation
$${\cal A}_k^* \varphi (a): = \f{\p}{\p a} \varphi (a) - B(a) \varphi(a) + k B(a) \varphi(0).$$
\begin{proposition}\label{prop:eigen:age}
Under Assumption~\eqref{as:B}, for $k=1$ and $k=2,$ there exists a unique solution $(\lambda_k,N_k,\varphi_k)$ to the system
\begin{equation}\label{eq:age:eigen}
\left\{\begin{array}{l}
\lambda_k N_k (a) + \f{\p}{\p a} N_k(a) + B(a)N_k(a)=0,\qquad \int_0^\infty N_k (a)da=1, \\ \\
N_k(0)=k \int_0^\infty B(a)N_k (a) da, \\ \\
\lambda_k \varphi_k (a) - \f{\p}{\p a} \varphi_k (a) + B(a)\varphi_k(a)=k B(a)\varphi_k (0),\qquad \int_0^\infty N_k (a)\varphi_k (a) da=1.
\end{array}
\right.
\end{equation}
Moreover, we have $\lambda_1=0,$ $\varphi_1\equiv 1$,  $\lambda_2>0,$ $N_k, \;\varphi_2 >0$, $\lambda_2$ is uniquely defined by the relation
\begin{equation}
\label{def:lambda2}
1=2\int_0^\infty B(a)e^{-\lambda_2 a - \int_0^a B(s)ds}da,
\end{equation}
and $\varphi_2 \in L^\infty (0,\infty)$, $\varphi_2(0)>0$ with the uniform bound
$$\Vert \varphi_2 \Vert_{L^\infty} \leq 2 \varphi_2 (0)=2\frac{\int_0^\infty e^{-\lambda_2a-\int_0^a B(s)ds}da}{\int_0^\infty sB(s) e^{-\lambda_2 s -\int_0^s B(\sigma)d\sigma}ds}.$$
\end{proposition}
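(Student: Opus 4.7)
The plan is to reduce everything to explicit ordinary differential equations, since the transport structure of the renewal operator makes each eigenequation a first-order linear ODE coupled to a scalar nonlocal condition at $a=0$.

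\textbf{Direct problem.} First I integrate the ODE for $N_k$ along characteristics: for any candidate $\lambda_k\in\mathbb{R}$ the bounded solution of
\[
\partial_a N_k(a)+\bigl(\lambda_k+B(a)\bigr)N_k(a)=0
\]
is $N_k(a)=N_k(0)\,e^{-\lambda_k a-\int_0^a B(s)\,ds}$. Inserting this into the nonlocal boundary condition and dividing by $N_k(0)$ yields the scalar dispersion relation $1=k\int_0^\infty B(a)e^{-\lambda_k a-\int_0^a B(s)ds}da$. For $k=1$, the telescoping identity $\int_0^\infty B(a)e^{-\int_0^a B}da=[-e^{-\int_0^a B}]_0^\infty=1$ (here is where $\int^\infty B=\infty$ enters) proves that $\lambda_1=0$ is a solution. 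For $k=2$, I define $F(\lambda):=2\int_0^\infty B(a)e^{-\lambda a-\int_0^a B(s)ds}da$ and observe that $F(0)=2$, $F$ is strictly decreasing and continuous in $\lambda$ on $\mathbb{R}$ by dominated convergence (the integrand is pointwise monotone and dominated by $B(a)e^{-\int_0^a B}$), and $F(\lambda)\to 0$ as $\lambda\to\infty$; hence there is a unique $\lambda_2>0$ with $F(\lambda_2)=1$. Strict monotonicity gives uniqueness of $\lambda_k$ in both cases, and $N_k(0)$ is fixed by the normalization $\int_0^\infty N_k=1$, which is legitimate because the integral $\int_0^\infty e^{-\lambda_k a-\int_0^a B}da$ is finite (for $k=1$ this is precisely the finite mean-age assumption in \eqref{as:B}, for $k=2$ it is even smaller since $\lambda_2>0$).

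\textbf{Adjoint problem.} The equation for $\varphi_k$ is $-\partial_a\varphi_k+(\lambda_k+B)\varphi_k=kB(a)\varphi_k(0)$, which, multiplied by the integrating factor $e^{-\lambda_k a-\int_0^a B}$, becomes
\[
-\frac{d}{da}\Bigl[\varphi_k(a)e^{-\lambda_k a-\int_0^a B(s)ds}\Bigr]=kB(a)\varphi_k(0)e^{-\lambda_k a-\int_0^a B(s)ds}.
\]
Imposing decay at infinity (the only choice consistent with a bounded solution) and integrating from $a$ to $\infty$ gives the explicit representation
\[
\varphi_k(a)=k\varphi_k(0)\,e^{\lambda_k a+\int_0^a B(s)ds}\int_a^\infty B(s)\,e^{-\lambda_k s-\int_0^s B(\sigma)d\sigma}\,ds.
\]
Evaluating at $a=0$ reproduces the same dispersion relation, confirming compatibility. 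For $k=1,\lambda_1=0$, the inner integral is $e^{-\int_0^a B}$ and cancels the exterior factor, so $\varphi_1\equiv\varphi_1(0)$; the normalization $\int N_1\varphi_1=1$ together with $\int N_1=1$ forces $\varphi_1\equiv 1$.

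\textbf{Positivity, bound, and the value of $\varphi_2(0)$.} Positivity of $N_k$ and $\varphi_k$ is immediate from the explicit formulas. For the uniform bound on $\varphi_2$, I rewrite the formula with the change of variable $u=s-a$, giving
\[
\frac{\varphi_2(a)}{2\varphi_2(0)}=\int_a^\infty B(s)\,e^{-\lambda_2(s-a)-\int_a^s B(\sigma)d\sigma}\,ds\le\int_a^\infty B(s)\,e^{-\int_a^s B(\sigma)d\sigma}\,ds=1,
\]
where the bound on $\lambda_2>0$ trivializes the exponential and the final identity is again telescoping; thus $\|\varphi_2\|_\infty\le 2\varphi_2(0)$. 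Finally, I multiply $N_2\varphi_2$, apply Fubini to the double integral, and use $\int a\,e^{-\lambda_2 a-\int_0^a B}da$ type manipulations together with the constraint $\int N_2\varphi_2=1$ to identify $\varphi_2(0)$ with the quotient stated in the proposition.

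\textbf{Expected difficulty.} The genuinely non-cosmetic step is the qualitative analysis of the dispersion function $F(\lambda)$ for $k=2$: existence of $\lambda_2$ rests on continuity and the limits of $F$, and uniqueness on strict monotonicity, both of which require dominated convergence using the second half of assumption \eqref{as:B}. The rest is essentially bookkeeping of first-order ODEs and one Fubini argument; no fixed-point or Krein--Rutman machinery is needed because the one-dimensional structure of the age variable makes the eigenproblem explicitly integrable.
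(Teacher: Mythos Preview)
Your proposal is correct and follows essentially the same approach as the paper: both solve the first-order ODEs explicitly, derive the dispersion relation from the boundary condition, analyse its monotonicity and limits to pin down $\lambda_2$, and then fix the constants by the normalisations. The only cosmetic difference is in the bound $\|\varphi_2\|_{L^\infty}\le 2\varphi_2(0)$: you drop the factor $e^{-\lambda_2(s-a)}$ and telescope, whereas the paper integrates by parts to write $\varphi_2(a)=2\varphi_2(0)\bigl(1-\int_a^\infty \lambda_2 e^{-\lambda_2(s-a)-\int_a^s B}\,ds\bigr)$; both arguments are one line.
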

\begin{proof} The fact that $\varphi_1\equiv 1$ can be seen directly from the fact that for $k=1$ the equation is conservative. In the case of the renewal equation, contrarily to more involved cases where the study of existence and uniqueness of eigenelements is a field in itself (see Section~\ref{sec:anal:growthfrag}), we can immediately compute that solutions must satisfy
\begin{equation}\label{def:eigen:age}
\left\{
\begin{array}{ll}
N_k (a)&= N_{k}(0) e^{-\lambda_{k} a-\int_0^a B(s)ds}, \,  N_{k}(0)=N_{k} (0) k \int_0^\infty B(a)  e^{-\lambda_{k} a-\int_0^a B(s)ds}da,
\\ \\
\varphi_k (a)&=\varphi_k (0)\left(1-k \int_0^a B(s) e^{-\lambda_k - \int_0^s B(\sigma)d\sigma} ds\right)e^{\lambda_k a + \int_0^a B(s)ds}\\ \\
&=k\varphi_k (0)\int_a^\infty B(s)  e^{-\lambda_k (s-a) - \int_a^s B(\sigma)d\sigma} ds
\end{array}
\right.
\end{equation}
From this relation and from Assumption~\eqref{as:B}, we deduce that $\lambda_1=0$ and obtain that $\lambda_2$ satisfies~\eqref{def:lambda2}  from the boundary condition at $a=0.$  Since the right-hand side of~\eqref{def:lambda2} is a continuously decreasing function of $\lambda_2$ that equals $2$ for $\lambda_2=0$ (thanks to the fact that $\int_0^\infty B dx=\infty$) and vanishes for $\lambda\to\infty$, this defines a unique $\lambda_2>0$. The normalisation condition $\int_0^\infty N_k (a)da=1$ ensures uniqueness of $N_k$ (and the convergence of the integral $\int_0^\infty N_1(a)da=1$ is guaranteed by the assumption $\int x B(x) e^{~\int_0^x Bda}<\infty$); the normalisation condition $\int_0^\infty N_k (a)\varphi_k (a) da=1$ ensures uniqueness of $\varphi_k$. The uniform bound for $\varphi_2$ is obtained by integrating by parts~\eqref{def:eigen:age}:
$$\varphi_2 (a) =2 \varphi_2 (0) (1-\int_a^\infty \lambda_2 e^{-\lambda_2 (s-a)-\int_a^s B(\sigma)d\sigma}ds) \leq 2\varphi_2 (0),$$
and we compute
$$\begin{array}{ll}
1&=\int_0^\infty N_2(a)da \implies N_2(0)=\left(\int_0^\infty e^{-\lambda_2a-\int_0^a B(s)ds}da\right)^{-1},
\\ \\
1&=\int_0^\infty N_2(a)\varphi_2(a)da=N_2(0)\varphi_2(0)\int_0^\infty \int_a^\infty B(s) e^{-\lambda s -\int_0^s B(\sigma)d\sigma}dsda
\\ \\
&=N_2(a)\varphi_2(a)\int_0^\infty sB(s) e^{-\lambda s -\int_0^s B(\sigma)d\sigma}ds
\\ \\
&\implies  \varphi_2(0)=\frac{\int_0^\infty e^{-\lambda_2a-\int_0^a B(s)ds}da}{\int_0^\infty sB(s) e^{-\lambda_2 s -\int_0^s B(\sigma)d\sigma}ds}.
\end{array}$$
\end{proof}
\begin{remark}We may find a solution for $k=2$ under the relaxed assumption
$$B\in L^\infty_{loc}\left((0,\infty),[0,\infty)\right), \quad\int_0^\infty B(x)dx >\ln (2),$$
which is weaker than~\eqref{as:B}.
 This would however  lead to $\lambda_1 <0$ in the case where $\int_0^\infty B dx<\infty,$ so that the conservative equation would have no conservative eigenvector, {\it i.e.} no possible trend to a steady behaviour, what should be excluded for modelling purpose.
 Similarly, if the assumption $\int x B(x) e^{-\int_0^x Bda} dx <\infty$ is not fulfilled, we cannot normalise the eigenpair by $\int N_1\varphi_1 dx=1$ because $\int N_1 \varphi_1 dx=\infty$ in this case.
 \end{remark}
\subsubsection*{Existence of solutions}
Many methods are available to prove existence and uniqueness of solutions in various spaces; we specially refer the interested reader to~\cite{MetzDiekmann,BP} and for measure solutions to~\cite{gabriel2018measure}. We only mention here an easy proof obtained by the Banach-Picard fixed point theorem and Duhamel's formula, inspired by~\cite{BP}, chapter 3,~\cite{bansaye2019ergodic} Appendix~B and~\cite{gabriel2018measure} (solution for the dual equation). We take an $L^1$ space which is natural for solutions, namely $L^1\left((1+B(a))da\right),$ so that the assumptions on $B$ are minimal.
\begin{proposition} For $B\in L^\infty_{loc}\left((0,\infty);(0,\infty)\right)$ such that $\int_0^\infty B(a)da=\infty,$ for $n^0 \in L^1\left((1+B(a))da\right),$
there exists a unique solution $n\in L^\infty_{loc}\left(0,\infty;L^1\left((1+B(a))da\right)\right)$ to~\eqref{eq:renewal} and we have the comparison principle:
$$\forall x,\; n^0_1(a)\leq n^0_2 (a) \implies n_1(t,a) \leq n_2 (t,a) \qquad \forall \;t,\,a\geq 0.$$\label{prop:exist:age}
\end{proposition}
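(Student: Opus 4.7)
The strategy is to reduce the PDE to a scalar Volterra integral equation for the boundary trace $N(t):=n(t,0)$ via the method of characteristics, solve that equation by a contraction argument, and reconstruct $n$ through Duhamel's formula. Integrating~\eqref{eq:renewal} along the characteristics $a\mapsto a+t$ gives the representation
$$n(t,a) = \begin{cases} n^0(a-t)\,\exp\bigl(-\int_{a-t}^{a}B(s)ds\bigr) & \text{if } a\geq t,\\ N(t-a)\,\exp\bigl(-\int_0^{a}B(s)ds\bigr) & \text{if } a<t.\end{cases}$$
Substituting this into the boundary condition yields a scalar Volterra equation $N=kK*N+F$, with kernel $K(a):=B(a)e^{-\int_0^a B(s)ds}$ and source $F(t)=k\int_0^\infty B(y+t)\,e^{-\int_y^{y+t}B(s)ds}\,n^0(y)\,dy$. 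The assumption $\int^\infty B=\infty$ makes $K$ a probability density on $(0,\infty)$, and Fubini together with the fundamental theorem of calculus in $t$ gives $\int_0^T F(t)\,dt\leq k\|n^0\|_{L^1}$, so $F\in L^1_{loc}$.

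Next, I would solve the Volterra equation by Banach--Picard on $[0,T]$ in the weighted space equipped with the norm $\|N\|_\lambda:=\int_0^T e^{-\lambda t}|N(t)|\,dt$. The map $\Phi(N):=kK*N+F$ satisfies
$$\|\Phi(N_1)-\Phi(N_2)\|_\lambda \leq k\Bigl(\int_0^\infty K(a)e^{-\lambda a}\,da\Bigr)\,\|N_1-N_2\|_\lambda,$$
and the right-hand factor tends to $0$ as $\lambda\to\infty$ by dominated convergence, so choosing $\lambda$ sufficiently large makes $\Phi$ a strict contraction, producing a unique fixed point $N\in L^1_{loc}([0,\infty))$. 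An $L^\infty_{loc}$ upgrade on $N$ then follows by bootstrap from the identity $N=kK*N+F$, using that $K$ is a probability density and that the contribution of $F$ is locally bounded on $[0,T]$ by splitting $n^0$ between its portion on $[0,R]$ (where $B$ is bounded since $B\in L^\infty_{loc}$) and its tail (controlled via $n^0\in L^1((1+B)da)$).

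Third, I reconstruct $n$ from $N$ via the Duhamel formula above and verify the target functional space. Re-injecting the representation into the boundary integral yields the key identity
$$\int_0^\infty B(a)\,n(t,a)\,da = \int_0^t K(a)\,N(t-a)\,da + F(t)/k = N(t)/k,$$
which shows that $B\,n(t,\cdot)\in L^1$ with norm $N(t)/k$ locally bounded in $t$. Combined with the straightforward bound $\int n(t,\cdot)\,da\leq \int_0^t N(s)\,ds+\|n^0\|_{L^1}$, this establishes $n\in L^\infty_{loc}(0,\infty;L^1((1+B(a))\,da))$. Uniqueness of $n$ is inherited from uniqueness of $N$, since any solution must have a boundary trace that is a fixed point of $\Phi$.

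Finally, for the comparison principle, $\Phi$ is monotone in $N$ (since $K\geq 0$) and $F$ depends monotonically on $n^0$; iterating from $N^{(0)}\equiv 0$ produces nondecreasing nonnegative approximations, and monotonicity in the initial datum propagates to the fixed point $N$ and then, through the positivity-preserving Duhamel reconstruction, to $n$. The main delicate point in the argument is controlling the potentially unbounded weight $B$ in the target space; this is handled cleanly by the boundary-trace identity $\int B\,n\,da=N/k$ rather than by direct pointwise estimates on $F$.
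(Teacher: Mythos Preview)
Your route is a legitimate and classical alternative to the paper's. The paper applies Banach--Picard directly to the full two-variable function: it sets $\tilde n=e^{-\lambda t}n$, writes the Duhamel map $\Gamma_{n^0}$ acting on $\tilde n$, and contracts in the space $L^\infty\bigl(0,\infty;L^1((\tfrac{\lambda}{2k}+B(a))da)\bigr)$. You instead reduce to the scalar Volterra equation for the boundary trace $N(t)=n(t,0)$, contract in a weighted $L^1$ norm in time, and rebuild $n$ from $N$. Both rely on the same Duhamel formula; the difference is whether the fixed point is run on $n(t,\cdot)$ or on $N(\cdot)$. The advantage of the paper's choice is that the conclusion $n\in L^\infty_{loc}\bigl(0,\infty;L^1((1+B)da)\bigr)$ is the fixed-point space itself, so no upgrade step is needed.

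That upgrade is where your argument has a genuine gap. Your identity $\int B(a)n(t,a)\,da=N(t)/k$ is correct, so landing in the target space does require $N\in L^\infty_{loc}$, and for this you need $F\in L^\infty_{loc}$. But your tail-splitting argument for $F$ does not close: on $\{y>R\}$ you must bound $\int_R^\infty B(y+t)e^{-\int_y^{y+t}B}|n^0(y)|\,dy$, and the hypothesis $n^0\in L^1((1+B)da)$ only controls $\int B(y)|n^0(y)|\,dy$, not $\int B(y+t)|n^0(y)|\,dy$. Under the bare assumption $B\in L^\infty_{loc}$ there is no relation between $B(y+t)$ and $B(y)$. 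Concretely, if $B$ carries increasingly tall spikes at the integers and $n^0$ is a sum of bumps placed at $n-t_0$ (so that $Bn^0\in L^1$ since the bumps miss the spikes, yet $y\mapsto y+t_0$ pushes them onto the spikes), then $F(t_0)$ can be made infinite. So the step ``$F$ is locally bounded on $[0,T]$, tail controlled via $n^0\in L^1((1+B)da)$'' is not justified as written. If you want to keep the boundary-trace approach, the clean fix is to run the contraction for $N$ directly in a norm equivalent to $\sup_{t\le T}e^{-\lambda t}\int_0^\infty(1+B(a))|n(t,a)|\,da$ expressed through $N$ and $n^0$---which is exactly what the paper's choice of space accomplishes in one stroke.
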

\begin{proof}
Let $\lambda>0$ a given constant. We look for solutions $\tilde n (t,a)=e^{-\lambda t} n(t,a)$ to the  equation
$$
\left\{
\begin{array}{ll}
\f{\p}{\p t} \t n(t,a) + \f{\p}{\p a} \t n(t,a) +\lambda \t n= - B(a) \t n(t,a),
\\ \\
\t n(t,0)=k \int_0^\infty B(a)\t n(t,a)da, \qquad \t n(0,a)=n^0(a).
\end{array}\right.
$$We define $\psi(t)=\t n(t,t+a)$ which is solution to
$$\psi'(t) + \left(B(t+a)+\lambda\right)\psi =0,\qquad \psi(0)=n^0(a),$$
and similarly $\t \psi (t)=\t n(t+a,t)$ is solution to
$$\t \psi ' + \left(B(t) +\lambda\right)\t \psi =0,\qquad \t\psi(0)=\t n(a,0)=k \int_0^\infty B(s) \t n(a,s)ds$$
so that we find the Duhamel's formula
\begin{equation}\label{eq:duhamel}\begin{array}{ll}\t n(t,a)&=n^0(a-t)e^{-\int_0^t B(s+a)ds -\lambda t}\1_{\{a\geq t\}} 
\\ \\
&+ k e^{-\int_0^a B(\sigma)d\sigma -\lambda a}\int_0^\infty B(s)\t n(t-a,s) ds \1_{\{t\geq a\}}\\ \\
&:=\Gamma_{n^0}[\t n](t,a)
\end{array}.
\end{equation}
We consider the mapping $\Gamma_{n^0}$ taken on the weighted space $X=L^\infty\left(0,\infty;L^1\left((\f{\lambda}{2k}+B(a)\right)da\right):$ we find, for any $t\geq 0,$
$$\begin{array}{ll}\int_0^\infty \left\vert \left(\f{\lambda}{2k}+B(a)\right)\Gamma[n_1-n_2](t,a)\right\vert da
\\ \\
\qquad \leq \int_0^\infty k \left(\f{\lambda}{2 k}+B(a)\right)e^{-\int_0^a B(\sigma)d\sigma -\lambda a} da \left\Vert n_1-n_2\right\Vert_X
\\ \\
\qquad \leq  k \left(1+\int_0^\infty (\f{\lambda}{2k} -\lambda) e^{-\lambda a}da\right)
\left\Vert n_1-n_2\right\Vert_X
\\ \\
\qquad\leq \f{1}{2} \left\Vert n_1-n_2\right\Vert_X.
\end{array}
$$
This proves that $\Gamma$ is a strict contraction on $X,$ hence applying the Banach-Picard fixed point theorem we find a unique fixed point, which is solution to~\eqref{eq:renewal}. The comparison principle comes from the fact that if $n_0^1\leq n_0^2$ then $\Gamma_{n^0_1}[n]\leq \Gamma_{n^0_2}[n]$; we can iterate each of the operators $\Gamma_{n^0_i}$ and at the limit we find $\t n_1\leq \t n_2,$ hence $n_1\leq n_2.$
\end{proof}

\subsubsection*{General relative entropy}
A general fact shared by many linear population dynamics equations is a wide class of time-decreasing functionals, which may be used as a key ingredient to study the equation, in particular to prove long-time asymptotics~\cite{MMP2} or uniqueness of eigensolutions~\cite{DG}. We refer to~\cite{BP} for a thorough presentation in many situations.

\begin{lemma}
Let $H:[0,\infty) \to [0,\infty)$ a convex differentiable function,  and $n_1,$ $n_2$ two solutions of~\eqref{eq:renewal} in $L^\infty_{loc}([0,\infty); L^1((1+B(a))da)$ such that $n_2(t,x)>0$ for all $t,x\geq 0,$ and such that $\int \varphi_k (a) n_2(0,a) H\left(\f{n_1(0,a)}{n_2(0,a)}\right)da <\infty.$ 
We have the following inequality:
 \begin{equation}
 \label{ineq:GRE:age}
{\cal H}(t):=\int \varphi_k(a) e^{-\lambda_k t} n_2(t,a) H\left(\f{n_1(t,a)}{n_2(t,a)}\right) da\qquad \implies \qquad \f{d}{dt} {\cal H}(t) \leq 0.
 \end{equation}
\end{lemma}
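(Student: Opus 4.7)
The plan is to follow the classical General Relative Entropy (GRE) strategy in the spirit of Michel--Mischler--Perthame: reduce to a pure transport equation for the ratio $n_1/n_2$ (after an exponential-in-time renormalisation), multiply by the dual eigenfunction $\varphi_k$, and then exploit Jensen's inequality at the boundary $a=0$ to absorb the non-local term.

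First, I would set $\tilde n_i(t,a) = e^{-\lambda_k t} n_i(t,a)$ for $i=1,2$. By linearity, each $\tilde n_i$ solves
$$\partial_t \tilde n_i + \partial_a \tilde n_i + (B(a)+\lambda_k)\tilde n_i = 0,\qquad \tilde n_i(t,0)=k\int_0^\infty B(a)\tilde n_i(t,a)\,da.$$
The key computation is that $r(t,a) := \tilde n_1(t,a)/\tilde n_2(t,a)$ (well defined since $n_2>0$) satisfies the pure transport equation $\partial_t r + \partial_a r = 0$: the dissipative term $(B+\lambda_k)$ cancels in the quotient. Consequently, by the chain rule, $H(r)$ is also transported: $\partial_t H(r) + \partial_a H(r)=0$.

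Next, I would compute $\partial_t[\varphi_k \tilde n_2 H(r)]$. Using the equation for $\tilde n_2$ and the transport equation for $H(r)$, then rewriting $\varphi_k \partial_a \tilde n_2 H(r) + \varphi_k \tilde n_2 \partial_a H(r) = \partial_a[\varphi_k \tilde n_2 H(r)] - \varphi_k' \tilde n_2 H(r)$, I obtain
$$\partial_t[\varphi_k \tilde n_2 H(r)] + \partial_a[\varphi_k \tilde n_2 H(r)] = \bigl(\varphi_k'(a) - (B(a)+\lambda_k)\varphi_k(a)\bigr)\tilde n_2 H(r).$$
Now the dual eigenrelation $\lambda_k \varphi_k - \varphi_k' + B\varphi_k = kB\varphi_k(0)$ from Proposition~\ref{prop:eigen:age} simplifies the right-hand side to $-kB(a)\varphi_k(0)\tilde n_2 H(r)$. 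Integrating on $a\in(0,\infty)$ (the integrability of the initial condition and the bound on $\varphi_2$ propagate in time, so that the decay at $a=\infty$ is legitimate) and recalling $\mathcal{H}(t) = \int \varphi_k \tilde n_2 H(r)\,da$ yields
$$\frac{d}{dt}\mathcal{H}(t) = \varphi_k(0)\tilde n_2(t,0) H\bigl(r(t,0)\bigr) - k\varphi_k(0)\int_0^\infty B(a)\tilde n_2(t,a) H(r(t,a))\,da.$$

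Finally I would use the boundary condition: $\tilde n_2(t,0) = k\int B\tilde n_2\,da$ and $r(t,0) = \tilde n_1(t,0)/\tilde n_2(t,0) = \int B\tilde n_2 r\,da \,/\,\int B\tilde n_2\,da$, so that $r(t,0)$ is the weighted average of $r(t,\cdot)$ with respect to the probability measure $B(a)\tilde n_2(t,a)da/\int B\tilde n_2 da$. Convexity of $H$ and Jensen's inequality then give
$$\tilde n_2(t,0) H(r(t,0)) = k\int B\tilde n_2\,da\cdot H\!\left(\frac{\int B\tilde n_2 r\,da}{\int B\tilde n_2\,da}\right) \leq k\int_0^\infty B(a)\tilde n_2(t,a) H(r(t,a))\,da,$$
so the two terms in $\frac{d}{dt}\mathcal{H}$ combine into a non-positive quantity. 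The main obstacle, which is rather technical than conceptual, is justifying the interchange of derivative and integral (and the vanishing of $\varphi_k \tilde n_2 H(r)$ at $a=\infty$) at the regularity level provided by Proposition~\ref{prop:exist:age}; this is standard and usually handled by a truncation/approximation argument with the comparison principle ensuring $n_2>0$ propagates in time.
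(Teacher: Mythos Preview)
Your proof is correct and follows essentially the same GRE strategy as the paper: both compute $\frac{d}{dt}\mathcal{H}$, use the adjoint eigenrelation for $\varphi_k$ to simplify the bulk term, and close with Jensen's inequality against the probability measure $B(a)\tilde n_2(t,a)\,da/\int B\tilde n_2\,da$ at the boundary $a=0$. Your organisation via the observation that $r=n_1/n_2$ satisfies the pure transport equation $\partial_t r+\partial_a r=0$ is a slightly cleaner packaging of the same computation the paper carries out by direct expansion.
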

\begin{proof}
We compute, using the equations and integrating by parts:
$$\begin{array}{lll}
 \f{d}{dt}{\cal H}(t)&=&
 \int_0^\infty e^{-\lambda_k t}\left\{ \left(-\lambda \varphi_k (a) n_2 (t,a) +\varphi_k(a) \p_t n_2 (t,a) \right)H\left(\f{n_1}{n_2}(t,a)\right) \right.
  \\ \\
& &\left. + \varphi_k(a) \left(\p_t n_1(t,a)-\f{n_1}{n_2}(t,a) \p_t n_2 (t,a)\right) H'\left(\f{n_1}{n_2}(t,a)\right)
 \right\}da
 \\ \\
 &=&\int_0^\infty e^{-\lambda_k t}\left\{ \left(-\lambda \varphi_k (a) n_2 (t,a) +\varphi_k(a)(-\p_a n_2 (t,a) - B(a)n_2(t,a) \right)H(\f{n_1}{n_2}(t,a)) \right.
  \\ \\
  &&\left. +\varphi_k(a) \left(-\p_a n_1-Bn_1
-\f{n_1}{n_2} (-\p_a n_2 -B n_2)\right)(t,a) H'\left(\f{n_1}{n_2}(t,a)\right)
 \right\}da
 \\ \\
& =&\int_0^\infty e^{-\lambda_k t} \left(-\lambda \varphi_k  n_2  +\p_a \varphi_k n_2  - B\varphi_k n_2 \right)H\left(\f{n_1}{n_2}\right)(t,a) da
 \\ \\
   && +\int_0^\infty e^{-\lambda_k t} \varphi_k(a)  \left(\p_a n_1(t,a) - \f{n_1}{n_2} (t,a)\p_a n_2(t,a)\right)H'\left(\f{n_1}{n_2}(t,a)\right)da
  \\ \\&&
  +e^{-\lambda_k t}\varphi_k n_2 H\left(\f{n_1}{n_2}\right)(t,0)+ \int_0^\infty e^{-\lambda_k t}  \varphi_k(a) \left(-\p_a n_1
+\f{n_1}{n_2} \p_a n_2 \right) H'\left(\f{n_1}{n_2}\right) (t,a)
 da
 \\ \\
 &=&e^{-\lambda_k t}\left\{ \int_0^\infty - k \varphi_k(0)B (a)n_2(t,a) H\left(\f{n_1}{n_2}(t,a)\right)da \right.
 \\ \\
 && \left. +k \varphi_k(0) H \left(\f{n_1}{n_2}(t,0)\right)\int_0^\infty B(a)n_2(t,a)da\right\}
  \\ \\
 &=&k\varphi_k(0)e^{-\lambda_k t} \int_0^\infty   B(a)n_2(t,a)\left\{- H\left(\f{n_1}{n_2}(t,a)\right) + H \left(\int_0^\infty B(s)\f{n_1(t,s)}{n_2(t,0)}ds\right)\right\}da
 \\ \\
 &\leq& 0,
  \end{array}
 $$
 the last inequality following by Jensen's inequality applied to the convex function $H$ applied to the function  $f(a)=\f{n_1}{n_2}(t,a)$, with respect to the probability measure $B(a)\f{n_2(t,a)}{n_2(t,0)}da.$
\end{proof}
\begin{remark}In full generality, we could replace $\varphi_k(x)e^{-\lambda_k t}$ by any solution of the adjoint equation of~\eqref{eq:renewal}. We can also relax the assumption on $H$ differentiable, by taking a regularising sequence, since at the limit the terms involving $H'$ cancel each other. For $k=1,$ we have $\varphi_k e^{-\lambda_k t}\equiv 1:$ the "general relative entropy" is a standard relative entropy between $n_1$ and $n_2.$
\end{remark}
\subsubsection*{Long-time behaviour}
The long-time behaviour of the solution $n$ may be obtained by several methods: the semi-group theory~\cite{MetzDiekmann,mischler2016spectral}, the use of a Laplace transform~\cite{Feller}, the general relative entropy~\cite{BP} extended recently to measure solutions~\cite{gwiazda2016generalized}, use of invariants~\cite{Gw}, and very recently for measure-valued solutions Harris theorem and Doeblin's conditions~\cite{Bansaye2,Bansaye}. Whatever the method, the general fact is that under suitable assumptions, we have  the convergence
 $$n_k(t,a)e^{-\lambda_k t} \to  N_k(a) \int_0^\infty n_k^0(a) \varphi_k(a)da$$ with $\lambda_k$ and $N_k$ defined in Proposition~\ref{prop:eigen:age}, and this convergence is exponentially fast in functional spaces where a spectral gap may be proved. More specifically, let us cite the following result, adapted from~\cite{bansaye2019ergodic}, theorem 3.8.
 \begin{theorem}{\bf{(From~\cite{bansaye2019ergodic})}}
 \label{theo:asymp:age}Assume that there exists $a_0>0,$ $\underline{B}>0$, $p>0$ and $\ell \in (p/2,p]$
 such that $\forall a\in [a_0+\N p, a_0\ell +\N p], B(a)\geq \underline{B}.$ Then there exist $C>0,$ $\rho>0$ (which can be explicitely computed) such that for any positive finite measure $\mu^0,$ there exists a unique measure-valued solution $\mu_t$ to~\eqref{eq:renewal} in a weak sense, and the following inequality holds:
 $$\Vert e^{-\lambda t} \mu_t - \langle \mu^0,\varphi_k \rangle N_k\Vert_{TV} \leq C \Vert \mu^0\Vert_{TV} e^{-\rho t},$$
 where $\Vert\cdot\Vert_{TV}$ denotes the total variation norm.
 \end{theorem}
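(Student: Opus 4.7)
The plan is to recast the claim as exponential ergodicity of an auxiliary Markov semigroup, and then invoke Harris' theorem. Using the right eigenfunction $\varphi_k$ from Proposition~\ref{prop:eigen:age}, perform a $\varphi_k$-transform: if $(M_t)_{t\geq 0}$ denotes the linear semigroup on finite measures determined by $M_t\mu^0 = \mu_t$, set
$$
\tilde\pi_t(da) := \f{e^{-\lambda_k t}\,\varphi_k(a)}{\langle \mu^0, \varphi_k\rangle}\,\mu_t(da).
$$
By Proposition~\ref{prop:eigen:age}, $\pi(da) := \varphi_k(a)N_k(a)\,da$ is a probability measure; a direct computation against smooth test functions in~\eqref{eq:renewal} shows that $\pi$ is invariant for the Markov semigroup $\tilde P_t$ dual to the flow $\mu^0 \mapsto \tilde\pi_t$. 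The statement then reduces to proving a contraction $\|\tilde\pi_t - \pi\|_{TV} \leq Ce^{-\rho t}$ uniformly in initial probability measures, the general case following by linearity and the uniform bound $\|\varphi_k\|_{L^\infty} \leq 2\varphi_k(0)$.

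\textbf{Harris' theorem and Lyapunov drift.} The plan is to apply the coupling formulation of Harris' theorem used in~\cite{bansaye2019ergodic}: it is sufficient to exhibit a function $V \geq 1$ satisfying $\tilde P_t V \leq \alpha V + \beta$ for some $\alpha \in (0,1)$ and $\beta \geq 0$, together with a time $t_0 > 0$ and a Doeblin minorisation on the sublevel set $\{V \leq R\}$ of the form $\tilde P_{t_0}(a, \cdot) \geq \eta\,\nu(\cdot)$, uniformly for $a \in \{V \leq R\}$, where $\nu$ is a probability measure and $\eta > 0$. The Lyapunov control is the easy part: the periodic lower bound on $B$ forces rapid division of old particles, so an age-weighted function such as $V(a) = 1 + \1_{\{a \geq A\}}(a-A)$ for $A$ large enough can be shown to contract on average after one cycle of length $p$.

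\textbf{Verification of the Doeblin minorisation.} This is the main obstacle and the true content of the theorem. Starting from a Dirac mass $\delta_{a_\ast}$ with $a_\ast \leq R$, iterate Duhamel's formula~\eqref{eq:duhamel}: $M_{t_0}\delta_{a_\ast}$ decomposes into an atom at $a_\ast + t_0$ with weight $\exp\bigl(-\int_{a_\ast}^{a_\ast+t_0} B\bigr)$ plus a cascade of first- and higher-generation newborns distributed on $[0,t_0]$. The hypothesis that $B \geq \underline{B}$ on $\bigcup_{n \geq 0}[a_0 + np,\, a_0\ell + np]$ has two consequences: first, every individual divides almost surely within a time $T = T(a_0, p, \ell, \underline B)$ uniform in initial age, so the surviving atomic part becomes negligible once $t_0 \geq T$; second, the newborn density on the age axis acquires a uniform positive lower bound on a fixed nonempty interval $[0, L]$ after finitely many generations, giving $M_{t_0}\delta_{a_\ast}(da) \geq \eta\,\1_{[0,L]}(a)\,da$. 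The constraint $\ell \in (p/2, p]$ is essential here: it is precisely what rules out a resonance between the period $p$ and the division windows that would otherwise synchronise cohorts into periodic clusters and preclude ergodicity (the assumption $\ell > p/2$ forces consecutive division intervals to overlap modulo $p$ after one shift, so that every age residue class mod $p$ is eventually reachable).

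\textbf{Conclusion.} Combining the Lyapunov drift and the Doeblin minorisation in Harris' theorem produces an explicit rate $\rho > 0$ and constant $C$ (depending on $a_0$, $p$, $\ell$, $\underline B$) such that $\|\tilde\pi_t - \pi\|_{TV} \leq C e^{-\rho t}$. Undoing the $\varphi_k$-transform and using the boundedness of $\varphi_k$ from Proposition~\ref{prop:eigen:age} yields the announced bound on $\|e^{-\lambda t}\mu_t - \langle \mu^0, \varphi_k\rangle N_k\|_{TV}$; uniqueness of measure-valued solutions is obtained concurrently from the contraction estimate applied to two candidate solutions with the same initial data.
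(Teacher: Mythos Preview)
The paper does not prove this theorem: it is quoted from~\cite{bansaye2019ergodic} (Theorem~3.8 there), and the only argument the paper supplies is an elementary intuition in the special case $B\equiv\text{const}$, where the eigenelements are explicit and Duhamel's formula~\eqref{eq:duhamel} yields the asymptotic profile by hand. So there is no proof in the paper to compare against.

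Your sketch is nonetheless a faithful outline of the method behind the cited result. The Doeblin/Harris strategy is exactly what~\cite{bansaye2019ergodic} develops; one methodological difference is that they work \emph{directly} on the non-conservative semigroup via a generalised Doeblin condition (hence the title of that paper), rather than first performing the $\varphi_k$-transform to a conservative Markov semigroup and then applying the classical Harris theorem. The two routes are equivalent in spirit, but the direct approach avoids having to establish fine bounds on $\varphi_k$ beforehand and handles the case $k=2$ and $k=1$ uniformly.

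Two small points on your sketch. First, the final sentence is circular: you cannot extract uniqueness of measure-valued solutions from the contraction estimate, because the contraction is a statement about the semigroup $M_t$, which already presupposes well-posedness. Uniqueness must be obtained separately, e.g.\ by extending the Duhamel fixed-point of Proposition~\ref{prop:exist:age} to measures as in~\cite{gabriel2018measure}. Second, your reading of the constraint $\ell\in(p/2,p]$ is correct in spirit (it prevents a lattice/periodic obstruction to the minorisation), but the Lyapunov function you propose is only a placeholder; in the actual argument the drift is obtained from the uniform lower bound on $B$ over the periodic windows, and the specific form of $V$ matters for the explicit constants that the theorem claims are computable.
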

 Note that the assumptions are more restrictive than for the existence or the eigenproblem results, and still more restrictive to prove an exponential rate of convergence. To give an elementary intuition of this convergence, let us look at the case $B>0$ constant: then, Proposition~\ref{prop:eigen:age} shows that $\lambda_k=(k-1)B$, $\varphi_k \equiv 1$, $N_k(a)=k Be^{-k Ba}$ and a simple integration of the equation implies
 $$ \int_0^\infty n(t,a) da=e^{(k-1) Bt} \int_0^\infty n^0(a)da,$$
 we now  go back to the Duhamel formula~\eqref{eq:duhamel} and find
$$ \begin{array}{ll}n(t,a)&=n^0(a-t)e^{-\int_0^t B(s+a)ds}\1_{\{a\geq t\}} + k e^{-\int_0^a B(\sigma)d\sigma}\int_0^\infty B(s) n(t-a,s) ds \1_{\{t\geq a\}}
\\ \\
&=n^0(a-t)e^{- Bt}\1_{\{a\geq t\}}+N_k (a) e^{\lambda_k t}\1_{\{t\geq a\}}\int_0^\infty  n^0(a)\varphi_k (a) da.
\end{array}
$$
The first term of the right-hand side vanishes exponentially fast at rate $B,$ and the second term of the right-hand side is the expected limit.

\subsection{The renewal process} \label{sec: renewal}

Let us first discuss some heuristics for the convergence of empirical measures for further statistical applications. In order to extract information about $a \mapsto B(a)$, we consider the empirical distribution for a test function $g$ defined by
$$\mathcal E^T(g) = |\mathcal V_T|^{-1}\sum_{u \in \mathcal V_T}g(\zeta_u^{T}),$$
where
\begin{equation}\label{def:VT}
\mathcal V_T = \{u \in \mathcal U,\;b_u \leq T,\;b_u+\zeta_u > T\},
\end{equation}
 {\it i.e.}  the population of cells that are alive at time $T$, and $\zeta_u^T = T-b_u$ is the value of the age trait at time $T$.
We expect a law of large number as $T \rightarrow \infty$.\\

Heuristically, we postulate for large $T$ the approximation
$$\mathcal E^T(g) \sim
\frac{1}{\E[|{\mathcal V}_T|]}\E\Big[\sum_{u \in \mathcal V_T}g(\zeta_u^{T})\Big].$$
Then, a classical result  based on renewal theory (see Theorem 17.1 pp 142-143 of \cite{harris1963theory}) gives the estimate
$$
\E\big[|\mathcal V_T|\big] \sim \kappa_{B}e^{\lambda_{2}T},
$$
where $\lambda_{2}>0$ is the Malthusian parameter of the model, defined as the unique solution to
$$
\int_0^\infty B(x)e^{-\lambda_2x-\int_0^xB(u)du}dx=\frac{1}{2},
$$
as in \eqref{def:lambda2},
and $\kappa_{B}>0$ is an explicitly computable constant.
As for the numerator,
call $\chi_t$ the size of a particle at time $t$ along a branch of the tree picked at random. The process $(\chi_t)_{t\geq 0}$ is  Markov process with values in $[0,\infty)$ with infinitesimal generator
\begin{equation} \label{def generator}
\mathcal A g(x) = g'(x)+B(x)\big(g(0)-g(x)\big)
\end{equation}
densely defined on bounded continuous functions.
It is then relatively straightforward to obtain the identity
\begin{equation} \label{firstMtO}
\E\big[\sum_{u \in {\mathcal V}_T}g(\zeta_u^{T})\big] = \E\big[2^{N_T}g(\chi_T)\big],
\end{equation}
where $N_t = \sum_{s \leq t}{\bf 1}_{\{\chi_s-\chi_{s_-}>0\}}$ is the counting process associated to $(\chi_t)_{t \geq 0}$.
Putting together $\E\big[|\mathcal {V}_T|\big] \sim \kappa_{B}e^{\lambda_2T}$ and \eqref{firstMtO}, we thus expect
$$\mathcal E^T(g) \sim \kappa_{B}^{-1}e^{-\lambda_2T}\E\big[2^{N_T}g(\chi_T)\big],$$
and we anticipate that the term $e^{-\lambda_2 T}$ should somehow be compensated by the term $2^{N_T}$ within the expectation. To that end, following \cite{cloez2017limit} (and also \cite{bansaye2011limit} when $B$ is constant) one introduces an auxiliary ``biased" Markov process $(\widetilde \chi_t)_{t\geq 0}$, with generator $\mathcal A_{H_B}g(x) = \mathcal A g(x) = g'(x)+H_B(x)\big(g(0)-g(x)\big)$ for a biasing rate $H_B(x)$ characterised by
$$f_2(x)=H_B(x)\exp(-\int_0^x H_B(y)dy),$$
with
\begin{equation} \label{characterisation H_B}
f_2(x)=2e^{-\lambda_2x}f_1(x),\;x \geq 0,
\end{equation}
where
$$f_1(a) = B(a)\exp\Big(-\int_0^a B(s)ds\Big)$$
is the typical lifetime of a cell (without observation bias), or equivalently the density distribution of $\zeta_u$, see \eqref{eq: distrib classical} below.
This choice (and actually this choice only) enables us to obtain
\begin{equation} \label{first HB}
e^{-\lambda_2T}\E\big[2^{N_T}g(\chi_T)\big] = 2^{-1}\E\big[g(\widetilde \chi_T)B(\widetilde \chi_T)^{-1} H_B(\widetilde \chi_T)\big]
\end{equation}
with $\widetilde \chi_0=0$. Moreover $(\widetilde \chi_t)_{t \geq 0}$ is geometrically ergodic, with invariant probability $c_B\exp(-\int_0^xH_B(y)dy)dx$. We further anticipate
\begin{align*}
\E\Big[g(\widetilde \chi_T) B(\widetilde \chi_T)^{-1} H_B(\widetilde \chi_T)\Big] & \sim c_B\int_0^\infty g(x)B(x)^{-1}H_B(x)e^{-\int_0^xH_B(y)dy}dx \\
& = 2c_B\int_0^\infty g(x)e^{-\lambda_2 x}B(x)^{-1}f_1(x)dx
\end{align*}
assuming everthing is well-defined,
 by \eqref{characterisation H_B}.
Finally, we have $\kappa_B^{-1}c_B = 2\lambda_2$ which enables us to conclude
\begin{equation} \label{def limite bord}
\mathcal E^T(g) \rightarrow \mathcal E(g) := 2\lambda_2\int_0^\infty g(x)e^{-\lambda_2x} e^{-\int_0^x B(y)dy} dx
\end{equation}
as $T \rightarrow \infty$. The convergence is in probability, with some explicit rate linked to $\lambda_2$, as given below.



\begin{definition} The family of (real-valued) random variables $(\Upsilon_T)_{T>0}$ is asymptotically bounded in probability if
$$\limsup_{T \rightarrow \infty} \mathbb P\big(|\Upsilon_T| \geq K\big) \rightarrow 0\;\;\text{as}\;\;K\rightarrow \infty.$$
\end{definition}
In other words, the family of distributions of the random variables $\Upsilon_T$ is tight, or weakly relatively compact on a neighbourhood of $T=\infty$.
\begin{proposition}{\bf{Rate of convergence for particles living at time $T$ (Theorem 3 in \cite{hoffmann2016nonparametric}}} \label{rate en T}
Assume $\lambda_2 \leq 2\inf_x H_B(x)$. If $B$ is differentiable and satisfies $B'(x) \leq B(x)^2$ and $0 < c \leq B(x) \leq 2c$ for every $x \geq 0$ and some $c>0$, then
$$e^{\lambda_2T/2}\big(\mathcal E^T\big(g\big) - {\mathcal E}(g)\big)$$
is asymptotically bounded in probability.
\end{proposition}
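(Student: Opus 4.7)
\emph{Proof plan.} The plan is to build on the heuristic derivation that led to \eqref{def limite bord}. Set $G:=g-\mathcal E(g)$, so that $\int_0^\infty G(x)\exp(-\lambda_2 x-\int_0^x B(y)dy)dx=0$. Writing $S_T(G):=\sum_{u\in\mathcal V_T}G(\zeta_u^T)$, one has $\mathcal E^T(g)-\mathcal E(g)=|\mathcal V_T|^{-1}S_T(G)$. Under the standing assumptions, a standard Kesten--Stigum argument applied to the supercritical branching population yields that $e^{-\lambda_2T}|\mathcal V_T|$ converges in probability to a strictly positive random variable $W$. Consequently, it is enough to show that $e^{-\lambda_2T/2}S_T(G)$ is tight, and by Chebyshev's inequality this reduces to the $L^2$ estimate $\mathbb E[S_T(G)^2]=O(e^{\lambda_2T})$ uniformly in $T$.

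I would first deal with the first moment. By the many-to-one identities \eqref{firstMtO}--\eqref{first HB} applied to $G$,
\[
\mathbb E[S_T(G)]=\tfrac{1}{2}e^{\lambda_2T}\,\mathbb E\bigl[G(\widetilde\chi_T)B(\widetilde\chi_T)^{-1}H_B(\widetilde\chi_T)\bigr].
\]
Using \eqref{characterisation H_B}, the invariant density $c_B e^{-\int_0^x H_B(y)dy}$ of the biased process equals, up to the multiplicative weight $B(x)H_B(x)^{-1}$, a constant multiple of the asymptotic density $2\lambda_2 e^{-\lambda_2x-\int_0^xB}$. The centering of $G$ therefore translates into $\int_0^\infty GB^{-1}H_B\,d\mu_B=0$, where $\mu_B$ denotes the invariant probability of $(\widetilde\chi_t)$. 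Hence the right-hand side of the display is precisely the ergodic error of the biased process at time $T$. The hypotheses $0<c\leq B\leq 2c$ and $B'\leq B^2$ ensure a uniform-in-state Doeblin lower bound for the pure-jump generator $\mathcal A_{H_B}$, with contraction rate at least $\inf_x H_B(x)\geq\lambda_2/2$ under the assumption $\lambda_2\leq 2\inf_x H_B$. This yields $\mathbb E[S_T(G)]=O(e^{\lambda_2T/2})$.

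The core of the argument is the second moment. Split
\[
\mathbb E[S_T(G)^2]=\mathbb E\Bigl[\sum_{u\in\mathcal V_T}G(\zeta_u^T)^2\Bigr]+\mathbb E\Bigl[\sum_{u\neq v}G(\zeta_u^T)G(\zeta_v^T)\Bigr].
\]
The diagonal term is controlled by many-to-one applied to the bounded, \emph{non-centred} function $G^2$, giving $O(e^{\lambda_2T})$. For the off-diagonal part I would use a many-to-two formula, decomposing the sum along the division time $s\in[0,T]$ of the most recent common ancestor $u\wedge v$: conditionally on this branching event the two daughter cells, born at age $0$, initiate independent subtrees of duration $T-s$. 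Writing $\Phi_tG(x):=\mathbb E_x[\sum_{u\in\mathcal V_t}G(\zeta_u^t)]$ for the unnormalised branching semigroup, this decomposition reads schematically
\[
\mathbb E\Bigl[\sum_{u\neq v}G(\zeta_u^T)G(\zeta_v^T)\Bigr]=2\int_0^T\mathbb E\Bigl[\sum_{w\in\mathcal V_{s-}}B(\zeta_w^{s-})\Bigr]\bigl(\Phi_{T-s}G(0)\bigr)^2ds.
\]
Many-to-one applied to the bounded $B$ gives $\mathbb E[\sum_w B(\zeta_w^{s-})]=O(e^{\lambda_2s})$, while the first-moment analysis applied to each subtree yields $\Phi_{T-s}G(0)=O(e^{(\lambda_2-\rho)(T-s)})$ for an ergodic rate $\rho\geq\lambda_2/2$ of the biased chain. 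Integration over $s$ then produces the required bound $O(e^{\lambda_2T})$ as soon as $\rho>\lambda_2/2$ strictly.

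Assembling these estimates, $e^{-\lambda_2T/2}S_T(G)$ is bounded in $L^2$ and hence tight; dividing by $e^{-\lambda_2T}|\mathcal V_T|$, which converges in probability to a positive constant, yields the asymptotic boundedness in probability of $e^{\lambda_2T/2}(\mathcal E^T(g)-\mathcal E(g))$. The principal obstacle is not the branching algebra, which is dictated by the many-to-one and many-to-two formulas, but the quantitative, state-uniform spectral gap for the pure-jump process $(\widetilde\chi_t)$: one must extract an ergodic rate $\rho\geq\lambda_2/2$ and verify its compatibility with the weight $B^{-1}$ appearing in \eqref{first HB}. The hypotheses $B'\leq B^2$, $c\leq B\leq 2c$, and $\lambda_2\leq 2\inf_x H_B(x)$ are tailored precisely to produce such a spectral gap; at the critical boundary $\rho=\lambda_2/2$ a sharper coupling or a second-order expansion of $\Phi_tG$ is needed to avoid a logarithmic loss in $T$, and this is the most delicate point of the proof.
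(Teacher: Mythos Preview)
The paper does not give a proof of this proposition: it is quoted verbatim as Theorem~3 of \cite{hoffmann2016nonparametric}, preceded only by the heuristic derivation leading to \eqref{def limite bord}. There is therefore no ``paper's own proof'' to compare against beyond that heuristic, and any assessment has to be made against the argument in the cited reference.

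Your sketch follows exactly the architecture of the proof in \cite{hoffmann2016nonparametric} (see also \cite{cloez2017limit,bansaye2011limit}): reduce to an $L^2$ bound on the centred sum via Kesten--Stigum for $e^{-\lambda_2 T}|\mathcal V_T|$, control the first moment through the many-to-one formula \eqref{firstMtO}--\eqref{first HB} and the geometric ergodicity of the biased chain $(\widetilde\chi_t)$, and control the second moment through a many-to-two decomposition along the most recent common ancestor. Your identification of the crux --- that the ergodic rate $\rho$ of the biased chain must dominate $\lambda_2/2$, with a potential logarithmic loss at the boundary --- is precisely the technical heart of the argument in \cite{hoffmann2016nonparametric}, and the hypotheses $B'\leq B^2$, $c\leq B\leq 2c$, $\lambda_2\leq 2\inf_x H_B(x)$ are indeed designed to secure this. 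One point to tighten: your many-to-two display suppresses the dependence on the ancestor's age at the branching time; in the actual formula the two daughter subtrees are both initiated at age $0$ (as you write), but the weight involves the rate evaluated along the biased chain up to time $s$, not simply $\sum_{w}B(\zeta_w^{s-})$. This does not affect the order of the estimate, only the constants.
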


\subsection{The growth-fragmentation equation}
\label{sec:anal:growthfrag}
When the structuring variable is the size, the so-called growth-fragmentation equation appears in many applications, from TCP-IP protocol to polymerization reactions.  Let us write~\eqref{eq: EDP size simple} here under a more general form, with a growth rate $\tau(x),$ a division rate $B(x)=\tau(x)B(x),$ an a fragmentation kernel $b(y,dx)$ representing the probability distribution (in $dx$) of the offspring of a dividing individual of size $y$:
\begin{equation}\label{eq:growthfrag}
\left\{\begin{array}{ll}
\f{\p}{\p t} n_k (t,dx)+ \f{\p}{\p x}\big(\tau(x)n_k(t,dx)\big) = -B(x) \tau(x) n_k(t,dx)
\\ \\ \qquad\qquad\qquad+k \int_{y=x}^\infty  B(y)\tau(y) n(t,y)b(dy,x),
\\ \\
\tau(0)n_k(t,0)=0,
\end{array}\right.
\end{equation}
with two conditions ensuring conservation of mass through division and division into two (this second assumption is easily relaxed but this would be meaningless in our application context):
$$\int_{0}^y xb(y,dx)=\f{y}{2},\qquad \int_{0}^y  b(y,dx)=1.$$
An important simplification often done is to restrict the equation to so-called {\it self-similar} division kernel $b,$ meaning that the division place only depends on the ratio between the mother size and the daughter size: in such a case, we have
\begin{equation}\label{as:fragself}
b(y,dx):=\f{1}{y}b_0(\f{dx}{y}),\qquad \int_0^1 b_0(dz)=1,\qquad \int_0^1 zb_0(dz)=\f{1}{2},
\end{equation}
and modelling considerations also lead to $b(y,dx)=b(y,y-dx)$, leading to $b_0$ symetric in $1/2$
(in a still more general way, we may consider a stochastic number of children, see~\cite{bertoin2006random}). In the study of the equation, the moments play a very important role, and the moments of order zero and one have a physical interpretation: integrating the equation, we obtain - formally at this stage
$$\f{d}{d t} \int n_k(t,x) dx = (k-1) \int \tau(x)B(x) n_k(t,x) dx, $$
which means that for $k=1$ the number of individuals is constant - the equation is conservative - whereas for $\ep=2$ it grows due to the division process. Integrating the equation against the weight $x$, we have
$$\f{d}{d t} \int x n_k(t,x) dx =(\f{k}{2}-1)  \int B(x)\tau (x) n_k (t,x) dx + \int \tau(x) n_k(t,x)dx,$$
which is easily interpreted: for $k=1$ the mass increases with growth but decreases with division, whereas for $k=2$ it only increases with growth. More generally, moments of order $p$ show a balance between growth and division:
$$\f{d}{dt} \int_0^\infty x^p n dx=  \int_0^\infty \left(p -B(x)x\bigl(1-k\int_{0}^x \f{y^p}{x^p} b(x,dy)\bigr)\right)  x^{p-1}\tau(x) n dx.$$
This balance between growth and division leads to the main asymptotic behaviour to be expected: the convergence to a steady size-distribution profile, with exponential growth in time for the population case $k=2,$ at en exponential rate of convergence if a spectral gap is proved. This study begins in the 1980's with the work by Diekmann, Heijmans, Thieme and Gyllenberg and Webb~\cite{DiekmannHeijmansThieme1984,diekmann2003steady}, based on the theory of semigroups, and generally carried out under the assumption of a compact support for the size so that general theorems may apply in a more direct way. This has been then generalised by several authors, see~\cite{mischler2016spectral,bansaye2019ergodic}. Explicit solutions, for power law rates and specific fragmentation kernels have also been studied~\cite{escobedo2017short,doumic2018explicit,suebcharoen2011asymmetric}.

\subsubsection*{Eigenelements}
The eigenvalue problem and its adjoint are as follows:
\begin{equation}
\label{eq:eigenproblem}
\left \{ \begin{array}{l}
 \f{\p}{\p x} ((\tau  N_k)(x)) + \lambda_k N_k(x) =- (\tau B N_k)(x) +  k\int_x^\infty (\tau B N_k)(y)b(y,x)  dy,
\\
\\
\tau N_k(x=0)=0 ,\qquad N_k(x)\geq0, \qquad \int_0^\infty N_k(x)dx =1,
\\
\\
 -\tau (x) \f{\p}{\p x} (\varphi_k(x))  + \lambda_k\varphi_k(x) = B(x)\tau(x) (- \varphi_k(x) + k \int_0^x b(x,y) \varphi_k(y) dy),
\\
\\
\varphi_k(x)\geq 0, \qquad \int_0^\infty \varphi_k(x)N_k(x)dx =1.
\end{array} \right.
\end{equation}

These equations are meant in a weak sense, {\it i.e.} we look for $N_k\in L^1((0,\infty),dx)$ such that $\forall \phi\in {\cal C}_c^\infty ([0,\infty))$ we have
$$-\int_0^\infty \tau N_k\p_x\phi dx + \lambda \int_0^\infty \phi dx=\int_0^\infty B \tau N_k \left(k\int_0^x \phi (y)b(x,dy) -\phi(x)\right)dx,$$
and $\varphi\in W^{1\infty}_{loc}$ is solution almost everywhere.
For existence and uniqueness of eigenelements, the following assumptions are among the most general ones - except the fact that rates are at most polynomially growing, which is relaxed in Theorem~\ref{theo:asympCGY} below, but which was useful in the proof of~\cite{DG} based on moment estimates:
\begin{equation}\begin{array}{c}
Supp(b(y,\cdot))\subset [0,y],\qquad \int_0^y b(y,dx)=1,
\\ \\  \int_0^y xb(y,dx)=\f{y}{2},\qquad\int_0^y \f{x^2}{y^2} b(y,dx)\leq c<\f{1}{2},
\\
\\
\tau,\;B \in {\cal P}:=\left\{f\geq 0:\,\exists\,\mu,\,\nu\geq 0,\, \limsup\limits_{x\to\infty}x^{-\mu} f(x) <\infty,\, \liminf\limits_{x\to\infty} x^\nu f(x)>0\right\},
\\ \\
\tau B \in L^1_{loc}((0,\infty)),\quad \exists\,\alpha_0\geq 0,\quad \tau \in L^\infty_{loc} ([0,\infty), x^{\alpha_0} dx),
\\ \\
\forall\,K \,\text{compact on }(0,\infty),\quad \exists\,m_K>0,\quad \tau(x)\geq m_K \,{a.e.} \,x\in K,
\\ \\
\exists \,B_0\geq 0,\,C>0,\,\gamma\geq 0,\; Supp(B)=[b,\infty),\; \int_0^x b(y,dz) \leq \min\left(1,C(\f{x}{y})^\gamma\right),
\\ \\
B,\;\f{x^\gamma}{\tau}\in L^1_0:=L^1_{loc}([0,\cdot)),\qquad \lim\limits_{x\to\infty} xB =\infty.
\end{array}
\label{as:growthfrag}
\end{equation}

\begin{theorem}{\bf{(From~\cite{DG})}} \label{theo:eigenMDPG}   Under the balance assumptions~\eqref{as:growthfrag} on $\tau,$ $B$ and $b$, there exists a unique triplet $(\lambda_k,N_k,\varphi_k)$ with  $\lambda_2 >0,$ $\lambda_1=0$, $\varphi_1\equiv 1$ solution of the  eigenproblem \eqref{eq:eigenproblem}
and $$x^\alpha \tau N_k\in L^p(\R^+),\quad\forall \alpha\geq-\gamma,\quad\forall p\in[1,\infty],\quad x^\alpha \tau N_k\in W^{1,1}(\R^+),$$
$$\exists p>0\ s.t.\ \f{\varphi_2}{1+x^p}\in L^\infty(\R^+), \quad \tau\f{\p}{\p x}\varphi_2\in L_{loc}^\infty(\R^+).$$
\end{theorem}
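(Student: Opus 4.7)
The natural approach is to approximate the problem on $(0,\infty)$ by truncated eigenproblems on bounded intervals $[\ep, R]$, apply the Krein--Rutman theorem to each truncation to produce a positive principal eigentriplet, derive moment and regularity estimates uniform in $(\ep, R)$, and pass to the limit. Uniqueness then follows from a duality identity for the eigenvalue combined with the general relative entropy technique already illustrated in Section~\ref{subsec:anal:age}.

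\textbf{Truncation and a priori estimates.} On each interval $[\ep, R]$, with incoming-flux boundary conditions and the kernel $b$ restricted accordingly, the linear operator on $L^p([\ep, R])$ is a compact perturbation of the transport operator; irreducibility is provided by the coupling of sizes through the fragmentation term together with strict positivity of $\tau$ on compact subsets of $(0,\infty)$. The Krein--Rutman theorem then delivers a unique strictly positive principal eigentriplet $(\lambda_k^{\ep, R}, N_k^{\ep, R}, \varphi_k^{\ep, R})$, normalised by $\int N_k^{\ep, R}\, dx = 1$ and $\int N_k^{\ep, R} \varphi_k^{\ep, R}\, dx = 1$. The key analytical work is then to make these estimates uniform in $(\ep, R)$: testing the equation against the weights $x^\alpha$ yields recursive moment inequalities in which the structural constraint $\int_0^y (x/y)^2 b(y, dx) \le c < 1/2$ forces strict contraction of the fragmentation operator for large $\alpha$; combined with $\lim_{x\to\infty} xB(x) = \infty$, this produces uniform tail control, while $x^\gamma/\tau \in L^1_0$ controls the behaviour near zero. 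This closes uniform $L^1$ bounds on $x^\alpha \tau N_k^{\ep, R}$ for all $\alpha \ge -\gamma$; a dual testing of the adjoint equation propagates the polynomial pointwise bound $\varphi_2(x) \le C(1 + x^p)$ together with local $L^\infty$ control on $\tau\, \p_x \varphi_2$.

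\textbf{Passage to the limit and uniqueness.} The uniform estimates together with Dunford--Pettis equi-integrability and Arzel\`a--Ascoli on compact subsets of $(0,\infty)$ allow extraction of a subsequence converging to a limit $(\lambda_k, N_k, \varphi_k)$ solving~\eqref{eq:eigenproblem} in the weak sense and inheriting the claimed regularity. To identify $\lambda_k$: testing the equation for $N_1$ with $\phi \equiv 1$ and using $\int_0^y b(y, dx) = 1$ gives $\lambda_1 = 0$, while $\varphi_1 \equiv 1$ verifies the adjoint equation directly since $k\int_0^x b(x,y)\, dy - 1 = 0$ for $k=1$; testing the equation for $N_2$ with $\phi(x) = x$ and using $\int_0^y x\, b(y, dx) = y/2$ yields $\lambda_2 \int x N_2\, dx = \int \tau N_2\, dx > 0$. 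Uniqueness is obtained in two steps: given a second triplet $(\lambda', N', \varphi')$, pairing the equation for $N$ with $\varphi'$ and the adjoint for $\varphi'$ with $N$ and subtracting gives $(\lambda - \lambda') \int N \varphi'\, dx = 0$, hence $\lambda = \lambda'$; the general relative entropy identity, adapted from Section~\ref{subsec:anal:age} to the growth--fragmentation operator and applied to $H(u) = (u-1)^2$, then shows that $N/N_k$ is constant, and the normalisation closes uniqueness of $N_k$ and, by the symmetric argument on the adjoint side, of $\varphi_k$.

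\textbf{Main obstacle.} The most delicate point is the uniform moment control in Step~2: one must simultaneously propagate positive and negative moments (down to order $-\gamma$) under the truncation, and this is possible only because the assumptions in~\eqref{as:growthfrag} encode a strict imbalance $c < 1/2$ in the fragmentation kernel together with sufficiently strong growth of $xB$ at infinity. Without this exact balance the Krein--Rutman eigenmeasures could concentrate at the origin or escape to infinity, and the limiting procedure would collapse; everything else is then routine PDE work.
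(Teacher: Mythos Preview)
Your proposal is correct and follows essentially the same route that the paper sketches: the paper does not give a detailed proof here (the result is cited from \cite{DG}) but summarises the argument as ``standard theorems for regularized equations (Krein--Rutman or Perron--Frobenius), the compactness obtained by successive moments estimates, and uniqueness by using general relative entropy inequalities'' --- which is exactly your truncation/Krein--Rutman step, your uniform moment bounds, and your GRE-based uniqueness. Your write-up is in fact more explicit than the paper's, in particular in identifying $\lambda_1=0$ and $\lambda_2>0$ by testing against $\phi\equiv 1$ and $\phi(x)=x$.
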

We notice that the assumptions on $B$ at $\infty$ are very similar to the ones for the renewal equation. If  $B(x)=x^\gamma,$ the assumptions on $B$ are satisfied for $1+\gamma >0$, see~\cite{M1}. The proof is  based on standard theorems for regularized equations (Krein-Rutman or Perron-Frobenius), the compactness obtained by successive moments estimates, and uniqueness by using general relative entropy inequalities.

Finally, a quick computation shows that in the case of exponential growth $\tau(x)\equiv x$ and of division into two equally-sized daughters $b(y,x)=\delta_{x=\f{y}{2}}$, we have $\varphi_2 (x)\equiv C x$ with $C>0$ a normalisation constant, and $N_{1}(x)=C'xN_{2}(x)$ with $C'>0$ another normalisation constant.

\subsubsection*{General Relative Entropy}

\begin{lemma}{\bf{(General Relative Entropy Inequality - \cite{MMP2}, Theorem 2.1)}} \label{lem:GRE}
Let $n_1,$ $n_2$ be two solutions of~\eqref{eq:growthfrag}, with $n_{2}(t,x)>0$ for all time and size, $H:\R\to [0,\infty)$ positive, differentiable and convex, and $\int_0^\infty \varphi(x) n_{2}(0,x)H\left(\f{n_1}{n_2}(0,x)\right) dx<\infty.$ Then we have
$${\cal H} (t):= \int_0^\infty \varphi(x) e^{-\lambda t} n_2(t,x)H\left(\f{n_1}{n_2}(t,x)\right) dx,\; \f{d{\cal H}}{dt}=-D^H[n_1,n_2]\leq 0,
$$
with
\begin{align*}D^H[n_1,n_2](t)=\int_0^\infty\int_0^\infty k\varphi(x)e^{-\lambda t} n_2(t,y)\tau(y)B(y)b(y,x)
\\ \\ \left\{
H\left(\f{n_1}{n_2}(t,y)\right) - H\left(\f{n_1}{n_2}(t,x)\right) \right.\left. - H'\left(\f{n_1}{n_2}(t,x)\right) \{\f{n_1}{n_2}(t,y) -\f{n_1}{n_2}(t,x)
\} \right\}dxdy.
\end{align*}
\end{lemma}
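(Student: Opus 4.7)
\emph{Strategy.} The plan is to mimic the template already used for the renewal equation: differentiate $\mathcal H(t)$ under the integral, substitute~\eqref{eq:growthfrag} for both $n_1$ and $n_2$, integrate by parts in the transport terms, and invoke the \emph{adjoint} eigenvalue equation for $\varphi$ together with the convexity of $H$. Writing $u=n_1/n_2$, the chain rule (using $n_2\p_t u=\p_t n_1 - u\p_t n_2$) gives
\begin{align*}
\f{d\mathcal H}{dt}=\int_0^\infty \varphi\, e^{-\lambda t}\Big[-\lambda\, n_2 H(u)+\big(H(u)-uH'(u)\big)\p_t n_2+H'(u)\p_t n_1\Big]dx,
\end{align*}
and each $\p_t n_i$ splits via~\eqref{eq:growthfrag} into a transport piece, a pointwise fragmentation loss $-B\tau n_i$, and a nonlocal fragmentation gain $k\int_x^\infty B\tau n_i(y)b(y,x)dy$.

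\emph{Transport contribution.} Integration by parts against $\varphi$ (no boundary terms, thanks to $\tau(0)n_i(t,0)=0$ and decay at infinity) uses the key algebraic observation that since $n_1=un_2$, the two ``$H''(u)\p_x u$'' contributions cancel exactly between the $n_1$ and $n_2$ integrals. What survives is the single clean term $\int_0^\infty e^{-\lambda t}\tau\, n_2 H(u)\, \p_x\varphi\,dx$.

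\emph{Local fragmentation, adjoint equation, and convexity.} The pointwise loss contributions collapse via the identity $\big(H(u)-uH'(u)\big)\cdot 1 + H'(u)\cdot u = H(u)$ into $-\int e^{-\lambda t}B\tau\varphi\, n_2 H(u)\,dx$. Combining this with the $-\lambda\varphi n_2H(u)$ piece from $\p_t\mathcal H$ and the transport residue yields an integrand of the form $n_2 H(u)\big[\tau\p_x\varphi-\lambda\varphi-B\tau\varphi\big]$. The adjoint eigenvalue equation in~\eqref{eq:eigenproblem} rewrites this bracket as $-kB(x)\tau(x)\int_0^x b(x,y)\varphi(y)dy$. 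Exchanging the order of integration in the nonlocal gain terms via Fubini ($\int_0^\infty dx\int_x^\infty dy=\int_0^\infty dy\int_0^y dx$) and merging, the whole right-hand side becomes one double integral whose integrand is $k\, e^{-\lambda t}\varphi(x)B(y)\tau(y)n_2(t,y)b(y,x)$ times
\begin{align*}
-\big[H(u(y))-H(u(x))-H'(u(x))\big(u(y)-u(x)\big)\big].
\end{align*}
The tangent-line inequality $H(b)\geq H(a)+H'(a)(b-a)$ for convex $H$ makes the bracket nonnegative, giving $\f{d\mathcal H}{dt}=-D^H[n_1,n_2](t)\leq 0$.

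\emph{Main obstacle.} The difficulty lies not in any single calculation but in orchestrating three cancellations: the transport ``cross-terms'' via $n_1=un_2$, the collapse of the local loss to $-B\tau\varphi n_2 H(u)$, and the production by the adjoint equation of the exact weight $\int_0^x b(x,y)\varphi(y)dy$ needed to amalgamate with the Fubini-rearranged nonlocal gains. This is precisely what forces the weight to be the \emph{eigenfunction} $\varphi$ and not an arbitrary positive function, and it mirrors the mechanism already seen for the renewal equation. Ancillary points --- justifying differentiation under the integral, vanishing boundary terms, and removing the differentiability hypothesis on $H$ by mollification --- are standard and handled exactly as in the preceding subsection.
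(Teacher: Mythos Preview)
Your proof is correct and follows exactly the approach the paper intends: the paper does not spell out the computation but simply refers the reader to \cite{MMP1,BP} and to the renewal-equation calculation carried out earlier, of which this is the direct generalisation. Your chain-rule decomposition in $u=n_1/n_2$, the integration by parts of the transport term, the use of the adjoint equation~\eqref{eq:eigenproblem} to convert $\tau\p_x\varphi-\lambda\varphi-B\tau\varphi$ into the nonlocal weight, and the final appeal to the tangent-line inequality are precisely the steps of the standard proof in those references.
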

We let the reader check directly this computation or refer to~\cite{MMP1} or~\cite{BP}. We also observe that the entropy for the renewal equation may be viewed as a specific case of this inequality: it suffices to take $b(y,x)=\delta_{x=0}$ and $\tau(x)=1.$  As for the renewal equation, it is possible to prove long-term convergence by means of this entropy inequality~\cite{BP,MMP1}, and exponential rate of convergence through entropy-entropy dissipation inequalities~\cite{caceres2011rate}, {\it i.e.} if we can bound $-D^H$ by a quantity depending on ${\cal H}.$

\subsubsection*{Long time asymptotics: the central case of asynchronous exponential growth}

Many methods have been developed to study the long time asymptotics of growth-fragmentation equations, from semi-group theory, general relative entropy, to methods inspired by stochastic processes such as several very recent studies~\cite{FournierPerthame,canizo2020spectral}. Let us cite here a recent result, carried out only for the two extreme cases of uniform (\emph{i.e.} $b_0= 2\1_{[0,1]}$) or equal mitosis({\it i.e.} $b_0=2\delta_{1/2}$)  fragmentation kernels, but relatively general for the assumption on the fragmentation rate, which may grow faster than polynomially.

\begin{assumption}{\bf{(Assumptions for a spectral gap, see~\cite{canizo2020spectral})}}\label{as:theoCGY}

\

\begin{itemize}
\item $b(y,x)=\f{2}{y}\1_{\{x\leq y\}}$ or $b(y,x)=2\delta_{x=\f{y}{2}}.$
\item $\tau$  is locally Lipschitz, $g(x)=O(x)$ around $\infty$, $g(x)=O(x^{-\xi})$ around $0$ with $\xi\geq 0$,
\item
$B:\; (0,\infty) \to [0,\infty)$ is continuous,
\item
$\int_0 B dx <\infty,\qquad \lim\limits_{x\to 0} xB(x)=0,\qquad \lim\limits_{x\to\infty} xB(x)=\infty. $
\item If $b(y,x)=2\delta_{x=\f{y}{2}},$ the growth rate $\tau$ must moreover satisfy
\begin{itemize}
\item $\omega g(x) < g(\omega x)$ for all $x>0$ and $0<\omega<1,$
\item  $H(z):=\int_0^z \tau^{-1}(z)dz <\infty$ for all $z>0,$
\item $\lim\limits_{z\to \infty} H^{-1}(z+r) / H^{-1}(z) =1.$
\end{itemize}
\end{itemize}
\end{assumption}
Restricted to $\tau(x)=x^\nu$, we see that the assumptions imply $\nu\leq 1$ for the uniform kernel, $\nu<1$ for the equal mitosis kernel, and allow any growth for $B$ at infinity: compared to the assumptions for the existence of eigenelements, the main restriction, apart from the specific shapes of the fragmentation kernel, is that we cannot consider  superlinear growth rates , since then the cell sizes may explode in finite time.
\begin{theorem}{\bf{(Theorem~1.3 from~\cite{canizo2020spectral}}}
\label{theo:asympCGY}Under Assumptions~\ref{as:theoCGY}, there exists a unique eigentriplet $(\lambda_2,N_2,\varphi_2)$ solution to~\eqref{eq:eigenproblem}. Let us denote, for $k\leq 0$ and $K>1,$ the following weighted total variation norm
$$\Vert\mu\Vert_{k,K}:=\int_0^\infty (x^k + x^K) \vert \mu\vert (dx).$$
Then for $n_0$ a nonnegative finite measure satisfying $\Vert n_0\Vert_{k,K}<\infty,$ there exists a unique measure-valued solution $n_2$ to~\eqref{eq:growthfrag} and it satisfies, for some $C>0$ and $\rho>0,$
$$\Vert e^{-\lambda_2 t} n_2(t,\cdot) - <\varphi_2,n_0> N_2 \Vert_{k,K} \leq C e^{-\rho t} \Vert n_0 - <\varphi_2,n_0> N_2\Vert_{k,K},
$$
with the following possible choices for $k$ and $K:$
\begin{enumerate}
\item if $\int_0 \tau(x)^{-1}dx <\infty,$ take $k=0$ and any $K>1+\xi,$
\item if $\int_0 \tau(x)^{-1} dx=\infty,$ take any $k\in (-1,1)$ and $K>1+\xi,$
\item if $\tau(x)=x,$ take any $k\in (-1,1)$ and $K>1.$
\end{enumerate}
\end{theorem}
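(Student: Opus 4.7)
My plan is to follow the Harris theorem strategy outlined in the hint that references \cite{canizo2020spectral}: reduce the long-time behaviour of the non-conservative semigroup associated with \eqref{eq:growthfrag} to a conservative Markov problem obtained by "tilting" with the eigentriplet, then apply a Harris-type theorem based on a Lyapunov condition plus a Doeblin minorization. The existence and uniqueness of $(\lambda_2,N_2,\varphi_2)$ can be obtained independently (as in Theorem~\ref{theo:eigenMDPG}) or, more in the spirit of Harris's approach, jointly with the convergence, by showing that the tilted semigroup admits a unique invariant probability measure.

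First I would set up weighted moments. For $V(x)=x^k+x^K$ with $k,K$ in the admissible range stated in the theorem, I compute $\frac{d}{dt}\int V\,n_2(t,x)dx$ using the equation and the moment identity already written in the excerpt: the $x^{p-1}\tau(x)$ integrand together with the factor $\bigl(p-B(x)x(1-k\int (y/x)^p b(x,dy))\bigr)$ shows that, for $K$ large enough, the high-moment contribution is strictly dissipative thanks to $xB(x)\to\infty$, and for $k$ small enough with $k>-1$, the behaviour at $0$ is controlled by the assumption on $\tau$ near $0$ (whether $\int_0 \tau^{-1}<\infty$ or not, which dictates the admissible values of $k$). This yields a dual Lyapunov inequality of the form $\mathcal L^\ast V\le -\alpha V + \beta \mathbf 1_C$ on a compact $C\subset(0,\infty)$, after tilting by $\varphi_2$.

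Next I would establish the Doeblin minorization: there exist $T>0$, a compact set $C\subset(0,\infty)$ and a non-trivial measure $\nu$ with $\mathcal M_T \delta_x \ge \nu$ for every $x\in C$, where $\mathcal M_T$ is the (tilted, mass-preserving) semigroup. For the uniform kernel $b(y,x)=\frac{2}{y}\mathbf 1_{x\le y}$ this is the easy case: one division event immediately spreads the mass uniformly on $[0,y]$, so combining growth along characteristics with a single fragmentation gives a density bounded below on a fixed sub-interval. The hard part of the proof, and the real obstacle, is the equal mitosis kernel $b(y,x)=2\delta_{y/2}$: here the post-division measure is still atomic, and one must show that after many successive halvings combined with the growth flow $\tau$, the range of possible sizes densifies enough to dominate a non-degenerate measure on a compact set. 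This is exactly where the extra hypotheses in Assumption~\ref{as:theoCGY} (sub-homogeneity $\omega\tau(x)<\tau(\omega x)$, finiteness of $H(z)=\int_0^z \tau^{-1}$, and the asymptotic $H^{-1}(z+r)/H^{-1}(z)\to 1$) are used: they guarantee that iterated halvings at geometrically increasing times produce characteristic curves whose endpoints at time $T$ cover a full interval, and they quantify the resulting density.

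Finally, once the Lyapunov and Doeblin conditions are both in place on the tilted semigroup, a quantitative version of Harris's theorem (as in the references cited in the excerpt, e.g.\ \cite{Bansaye2,Bansaye,bansaye2019ergodic}) delivers the existence of a unique invariant probability $\pi$ and an exponential contraction in the weighted total variation distance $\|\cdot\|_{k,K}$. Unwinding the tilt, $\pi$ corresponds to $N_2$ with the $(\lambda_2,\varphi_2)$-normalization, and the contraction translates exactly into the bound
\[
\|e^{-\lambda_2 t}n_2(t,\cdot)-\langle\varphi_2,n_0\rangle N_2\|_{k,K}\le Ce^{-\rho t}\|n_0-\langle\varphi_2,n_0\rangle N_2\|_{k,K}.
\]
The constants $C,\rho$ are explicit in terms of the Lyapunov contraction rate and the Doeblin mass, while the three admissible regimes for $k$ in the statement correspond to the three cases in which the Lyapunov estimate at $0$ can be propagated: $\int_0\tau^{-1}<\infty$ forces $k=0$; $\int_0\tau^{-1}=\infty$ with general $\tau$ allows $k\in(-1,1)$; and the purely linear case $\tau(x)=x$ removes the upper-moment constraint $K>1+\xi$ because there is no singularity near zero.
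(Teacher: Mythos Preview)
The paper does not give a proof of this theorem: it is quoted verbatim as Theorem~1.3 of \cite{canizo2020spectral} and used as a black box in the review. So there is no ``paper's own proof'' to compare your attempt against.

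That said, your outline is a faithful sketch of the strategy actually employed in \cite{canizo2020spectral}: verify a Lyapunov-type moment inequality for $V(x)=x^k+x^K$, establish a Doeblin minorization on a compact set (trivial for the uniform kernel, and requiring the sub-homogeneity and $H^{-1}$ hypotheses for equal mitosis), and then apply a quantitative Harris theorem. One correction worth noting: in \cite{canizo2020spectral} the authors do not first construct $(\lambda_2,N_2,\varphi_2)$ and then tilt to a conservative semigroup; they use the non-conservative Harris/Doeblin framework of \cite{bansaye2019ergodic}, which produces the eigenelements and the spectral gap simultaneously. Your phrase ``after tilting by $\varphi_2$'' is therefore slightly circular as written, although you do hedge this earlier. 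Also, your explanation of the three regimes for $k$ is not quite right: the distinction between $k=0$ and $k\in(-1,1)$ comes from whether the semigroup instantaneously creates mass at arbitrarily small sizes (governed by $\int_0 \tau^{-1}$), not from whether the Lyapunov estimate ``propagates at $0$'' in the sense you describe; and in the case $\tau(x)=x$ the relaxation to $K>1$ is because $\xi=0$ there, not because ``there is no singularity near zero.''
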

The assumptions on the state space where the convergence holds are crucial to obtain the exponential speed of convergence, which is linked to a spectral gap. Specifically, P. Michel, S. Mischler and B. Perthame proved convergence - without speed -  in the  weighted space $L^1(\varphi_2 dx)$~\cite{MMP2}, which is the most natural space to prove convergence results through the general relative entropy inequality; but under the assumption of bounded fragmentation rates, E. Bernard and P. Gabriel  proved that there exists no spectral gap in this space: the convergence may hold arbitrarily slowly for well-chosen initial conditions, see Theorems~1.2 and~4.1 in~\cite{bernard2017asymptotic}. Among other important results that we cannot review in detail here, let us cite fine estimates on the eigenvector and adjoint eigenvector~\cite{caceres2011rate,balague:hal-00683148}, semigroup approaches~\cite{mischler2016spectral}, probabilistic approaches~\cite{bertoin2019feynman,bertoin2020strong}.

\subsubsection*{Long time asymptotics: other cases}

When the balance assumptions between growth and division around zero and around infinity fail to be satisfied, other types of asymptotic behaviour may happen, leading to mass escape towards zero (dust formation or shattering) or infinity (gelation). Let us focus on the case of exponential growth $\tau(x)=x$, interesting in several ways: as already said, it is the idealised growth rate for many unicellular organisms, like bacteria; it is also the limit case before characteristic curves grow to infinity in finite time; last but not least, it appears by a change of variables when studying the asymptotic trend to a self-similar profile for the pure fragmentation equation, see~\cite{MischlerRicard} Theorem 3.2. Assuming a power law for the division rate $B(x)=x^\gamma,$ we can classify the anomalous asymptotic behaviours according to the value of $\gamma.$
\begin{itemize}
\item $\gamma <-1:$ in such a case, there is a loss of mass by dust formation in finite time called {\it shattering}~\cite{haas2003loss,haas2004regularity,haas2010asymptotic,goldschmidt2010behavior,dyszewski2021sharp,goldschmidt2016behavior}, non-uniqueness of solutions~\cite{banasiak2006shattering,banasiak2019analytic}.

\item $\gamma=-1:$ this is a case where the {\it time-dependent} division rate $\beta(x)=B(x)\tau(x)$ is constant. It is a limit case, where there is neither loss of mass in finite time nor  convergence to a steady profile and exponential growth, since each moment of the equation grows or decays exponentially with a specific rate, see~\cite{doumic2016time}.
 This is interesting from a modelling perspective because it explains the fact that a model with both exponential growth in size and size-independent division (for instance an age-structured division rate for exponentially growing cells) is irrelevant, leading to a non realistic exponential behaviour since no steady profile in size may be obtained~\cite{robert:hal-00981312}. Generalisations of this limit case to $\tau(x)=x^{1+\gamma}$ and $B(x)\tau(x)=x^\gamma$ with $\gamma>0$, leading to blow-up in finite time, has been done by M. Escobedo~\cite{escobedo2017short,escobedo2020non} and J. Bertoin and A. Watson for the corresponding stochastic processes~\cite{bertoin2016probabilistic}.

\item $\gamma>-1:$ in general, convergence theorems such as Theorem~\ref{theo:asympCGY} are valid, however for very specific division kernel such as the idealised equal mitosis case, the solution may converge to a cyclic behaviour. In such a case, we still have a general relative entropy inequality given by Lemma~\ref{lem:GRE}, but a simple computation shows that the entropy dissipation $D^H$ vanishes not only for $\f{n_1}{n_2}$ constant, but also for any ratio satisfying
$$\f{n_1(x)}{n_2(x)}=\f{n_1(2x)}{n_2(2x)},$$
which is a kind of periodicity condition. The best intuition on what happens here comes from the underlying stochastic branching tree: all descendants of a given cell of size $x_0$ at time $0$ live on the countable set of curves $x_0 e^t 2^{-n},$ due to the very specific relation between growth and division, whereas for other growth or division the times of division account, leading to a kind of dissipativity. This case has been studied by semigroup theory for compact support in size by G. Greiner and R. Nagel~\cite{greiner1988growth}, and extended and revisited in~\cite{bernard2016cyclic} where the following explicit asymptotic result has been proved - see also~\cite{gabriel2019periodic} for extension to measure solutions.
\end{itemize}
%
%
%

\begin{theorem}{\bf{(Theorem~2.3. in~\cite{bernard2016cyclic})}}\label{theo:mitosis}
Let $\tau>0$ and define $\tau(x)= x$, $b(y,x)=2\delta_{x=\f{y}{2}}$, and $B$ such that
\begin{equation}\label{hyp:B}\left\{
\begin{array}{l}
B:\; (0,\infty) \to (0,\infty) \text{ is measurable},\;
B\in L^1_{loc}([0,\infty)),\\ \\
\exists\gamma_0,\gamma_1,K_0,K_1,x_0>0,\quad K_0x^{\gamma_0}\leq xB (x\geq x_0)\leq K_1x^{\gamma_1}.\end{array}\right.
\end{equation}
Theorem~\ref{theo:eigenMDPG} holds, but there also exists a countable set of nonpositive dominant eigentriplets defined, for $m\in\Z,$ by

 $$\lambda_k^m=(k-1)+\f{2im\pi}{\ln 2},\qquad N_k^m(x)=x^{-\f{2im \pi}{\ln 2}} N_k^m(x),\qquad \varphi_k^m (x)= c_m x^{k-1+\f{2im\pi}{\ln 2}},
$$
 with $c_m$ normalisation constants. All the quantities $<n_k(t,\cdot),\varphi^m_ke^{-\lambda_k^m t}>$ are then conserved, and
for any $n_0 \in$ $ L^2([0,\infty),x^{k-1}/\cU_k^0 (x) dx),$ the unique solution $n_k(t,x)$   $\in$ $C\big([0,\infty),L^2([0,\infty),x^{k-1}/\cU_k^0 (x) dx)\big)$ to~\eqref{eq:growthfrag}  satisfies
\[\int_0^\infty\, \bigg|n_k(t,x)e^{-\kappa (k-1)t} - \sum_{m=-\infty}^{\infty}<n_0,\varphi_k^m>\,\cU_k^m(x) e^{\frac{2im\pi}{\ln 2}t} \bigg|^2\frac{x\,dx}{\cU_k^0(x)} \xrightarrow[t\to\infty]{}0.\]
\label{theo:osci}
\end{theorem}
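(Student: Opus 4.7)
My plan is to break the statement into three essentially independent assertions: (i) the explicit form of the additional eigentriplets, (ii) the conservation of the associated modes, (iii) the weighted $L^2$ convergence to the Fourier-type series.

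For (i), the key algebraic observation is that $\alpha_m := \tfrac{2im\pi}{\ln 2}$ satisfies $2^{\alpha_m}=e^{2im\pi}=1$, so multiplication by $x^{\alpha_m}$ is invariant under the doubling $x\mapsto 2x$. I would start from $(\lambda_k^0,N_k^0,\varphi_k^0)$ given by Theorem~\ref{theo:eigenMDPG} (noting that $\varphi_k^0$ must be proportional to $x^{k-1}$, which can be verified from the moment identity $k\cdot 2^{-(k-1)}=1$ for $k=1,2$, yielding $\lambda_k^0=k-1$), and plug the ansatz $N_k^m(x)=x^{-\alpha_m}N_k^0(x)$, $\varphi_k^m(x)=c_m x^{k-1+\alpha_m}$ into the eigenproblem~\eqref{eq:eigenproblem} specialised to $\tau(x)=x$, $b(y,x)=2\delta_{x=y/2}$. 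The fragmentation term produces a factor $2^{\alpha_m}$ that collapses to $1$, and the transport part contributes $\alpha_m$, giving the shift $\lambda_k^m=\lambda_k^0+\alpha_m$.

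For (ii), a direct computation against $x^\beta$ of the equation yields, after one integration by parts,
\begin{equation*}
\tfrac{d}{dt}\int_0^\infty x^\beta n_k(t,x)\,dx \;=\; \beta\int_0^\infty x^\beta n_k\,dx \;+\; \bigl(k\,2^{-\beta}-1\bigr)\int_0^\infty x^{\beta+1}B n_k\,dx.
\end{equation*}
Choosing $\beta = (k-1)+\alpha_m$ makes $k\,2^{-\beta}=k\cdot 2^{-(k-1)}\cdot 2^{-\alpha_m}=1$, so the second bracket vanishes and $\int x^\beta n_k(t,\cdot)\,dx = e^{\lambda_k^m t}\int x^\beta n_0$, which is (up to the constant $c_m$) the conservation of $\langle n_k(t,\cdot),\varphi_k^m e^{-\lambda_k^m t}\rangle$.

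For (iii), denote the series by $\Psi(t,x)=\sum_m \langle n_0,\varphi_k^m\rangle N_k^m(x) e^{\frac{2im\pi}{\ln 2}t}$ and set $\bar u(t,x)=n_k(t,x)e^{-\lambda_k^0 t}-\Psi(t,x)$. By linearity and Step~(ii), $\bar u$ solves the same rescaled growth-fragmentation equation and satisfies $\langle \bar u(t,\cdot),\varphi_k^m\rangle\equiv 0$ for every $m$. I would then apply the general relative entropy of Lemma~\ref{lem:GRE} with quadratic $H(z)=|z|^2$ and reference $n_2=e^{\lambda_k^0 t}N_k^0$: the dissipation $D^H[\,\cdot\,,N_k^0]$ vanishes exactly when the ratio $\bar u/N_k^0$ satisfies $f(x)=f(2x)$, i.e.\ is $\ln 2$-periodic in $\log x$ and therefore spanned by the modes $x^{\alpha_m}$ already subtracted out in $\Psi$. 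This gives $\mathcal H(t)$ non-increasing and its $\omega$-limit orthogonal to each $x^{\alpha_m}$, hence zero.

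The main obstacle is the quantitative step that turns this qualitative dissipation into the $L^2$-convergence claimed in the statement: one must (a) show that the Fourier-type series defining $\Psi$ converges in $L^2(x\,dx/N_k^0)$, which amounts to a Parseval identity for the Mellin transform of $n_0$ along the vertical line $\Re(s)=k$ using the coefficients $\langle n_0,\varphi_k^m\rangle$, and (b) upgrade the entropy decay to convergence to $0$, which follows by rewriting the equation in the variable $r=\log x$, whereby the transport becomes translation and the non-local term becomes a translation by $\ln 2$; a Fourier decomposition in $r$ with period $\ln 2$ then diagonalises the modes orthogonal to $\mathrm{span}\{x^{\alpha_m}\}$ and shows that each projects with summable norm and vanishing temporal contribution, yielding the announced $L^2$ convergence.
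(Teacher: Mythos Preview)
The paper does not give its own proof of this theorem: it is quoted from \cite{bernard2016cyclic}, and the surrounding text only records the heuristic that the entropy dissipation $D^H$ of Lemma~\ref{lem:GRE} vanishes on ratios satisfying $\tfrac{n_1(x)}{n_2(x)}=\tfrac{n_1(2x)}{n_2(2x)}$, i.e.\ $\ln 2$-periodic in $\log x$. So there is no in-paper argument to compare against; that said, your plan is consistent with this heuristic and with the strategy of the cited reference. Parts (i) and (ii) are correct and complete, and for (iii) the GRE/LaSalle skeleton together with the log-size Fourier decomposition you sketch in your last paragraph is the right approach.

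The one step I would make explicit is the implication ``$\omega$-limit has $\ln 2$-periodic ratio and all projections $\langle\,\cdot\,,\varphi_k^m\rangle$ vanish $\Rightarrow$ zero''. Writing such a limit as $f=g\,N_k^0$ with $g(e^r)$ $\ln 2$-periodic and unfolding $\langle f,\varphi_k^m\rangle=0$ after the change $r=\log x$ gives, summing over periods,
\[
\int_0^{\ln 2} g(e^s)\,\Phi(s)\,e^{\alpha_m s}\,ds=0\quad\text{for all }m\in\Z,\qquad \Phi(s)=\sum_{n\in\Z} N_k^0(2^n e^s)\,(2^n e^s)^k,
\]
so all Fourier coefficients of $s\mapsto g(e^s)\Phi(s)$ on $[0,\ln 2]$ vanish; since $\Phi>0$ by positivity of $N_k^0$, this forces $g\equiv 0$. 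This is the concrete content of the Parseval/Mellin step you defer to the end, and it also explains why the family $\{N_k^m\}_m$ is complete for periodic-ratio profiles in the weighted $L^2$ space of the statement.
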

A numerical scheme needs to be non-dissipative to capture the oscillations, for instance by splitting transport and fragmentation and by using a geometric grid, see Section~3 in~\cite{bernard2016cyclic}; another way would be to use a splitting particle method~\cite{carrillo2014splitting}.

\subsection{Structured population equations and processes}
\label{subsec:model:gene}
For more general models and methods, several excellent  books~\cite{MetzDiekmann,BP,bansaye2015stochastic} have been written together with an extensive literature, mainly for linear but also for nonlinear~\cite{mischler2002stability,gwiazda2010nonlinear} cases, from a PDE or a stochastic point of view. Let us mention here only the expected asymptotic behaviour, given by the eigenvector(s) linked to the dominant eigenvalue(s), for the models cited in Section~\ref{sec: structured pop eq}.

\subsubsection*{The incremental/adder model}
The eigenvalue problem linked to the system~\eqref{eq: EDP adder simple}\eqref{eq: EDP adder simple bound} may be written as follows:
\begin{equation}\label{eigen:adder}
\left\{\begin{array}{l}
\lambda_k N_k + \f{\p}{\partial \i} (\kappa x N_k)+ \f{\p}{\partial x} \big(\kappa x N_k\big)  = - \kappa x B(\i)N_k(\i,x) ,
\\ \\
\kappa x N_k(0,x)= 4k \kappa x \int_0^\infty B(\i) N_k(\i,2x)d\i,
\end{array}\right.
\end{equation}
with $\lambda_1=0$ as usual, and $\lambda_2=\kappa$ as for the growth-fragmentation equation, due to the linear growth rate. Existence and uniqueness of a dominant positive eigenvalue and eigenvector has been recently studied in~\cite{gabriel2018steady}, for more general fragmentation kernels than just the diagonal kernel. Note also that in this specific case, we also have a countable set of dominant (not positive) eigenvalues, so that an equivalent of Theorem~\ref{theo:osci} may be obtained.
\subsubsection*{Generalisations}
Several types of generalisations have been studied, for instance with varying growth rates~\cite{DHKR}, or with a maturity variable added to the renewal equation~\cite{mischler2002stability}, size and age structured models~\cite{D,kang2020nonlinear} etc. Let us only write here the eigenvalue problem related to~\eqref{eq: EDP gene}\eqref{eq: EDP gene bound} - whose study once more lies beyond the scope of this chapter:
\begin{align}
\label{eq: EDP gene eigen}
&\lambda_k  N_k + \f{\p}{\partial \i} \left(\tau_\i(\i,x) N_k\right)+ \f{\p}{\partial x} \left(\tau_x(x,\kappa) N_k\right)   = - \beta(\i,x,\kappa)N_k(\i,x,\kappa),
\\
&\tau_x N_k(\i,0,\kappa)=0,\qquad N_k\geq 0, \qquad \iiint N_k dzdxd\kappa=1, \nonumber
\\
&\tau_\i(x,\kappa) N_k(0,x,\kappa)  =\label{eq: EDP gene bound eigen}
\\&\qquad \nonumber 2k \int_0^\infty \int_0^\infty \int_0^\infty \theta(\kappa',\kappa) b(y,x) \beta(\i,y,\kappa)  N_k(\i,y,\kappa')d\i dyd\kappa'. 
\end{align}
\section{Model calibration: statistical estimation of the division rate}
\label{sec:inverse}

In Section~\ref{subsec:datanal}, we have noticed that, contrarily to the growth rate or even to the division kernel, the division rate
cannot be inferred from direct measurements, even from individual dynamics data.  We thus face a typical inverse problem: How to estimate the division rate $B$ from data on a population, which follows - we assume - the dynamics given by one of the structured population model described above?

A first and major idea~\cite{PZ} consists in taking
 advantage of the asymptotic analysis carried out in Section~\ref{sec:anal}: we consider that at any time of the experiment, the population has already reached its steady asymptotic regime, {\it i.e.} for the observation scheme $k$ that the population is aligned along the dominant eigenvector $N_k$ of the model under consideration: age, size, increment, or more general model. This is well justified by the theoretical analysis above: the trend being exponentially fast under fairly general assumptions, the not-asymptotic regime concerns in most experimental cases only a negligible part of the data collected.

We recall (see Section~\ref{subsec:datanal}) that there are two types of datasets, each being related to a different inverse problem:
\begin{itemize}
\item {\bf Individual dynamics data collection}: following the trajectory of each individual allows us to measure the dividing and newborn cells. Intuitively, one feels that this allows a relatively direct estimation of the division rate. This type of data may concern genealogical  (through {\it e.g.} microfluidic device) as well as population (microcolony growth) observation. A difficulty in the population dynamics observation is the selection bias~\cite{hoffmann2016nonparametric}.

\item {\bf Population point data}: we can observe, at given timepoints, samples of some structuring variables such as size (or age, fluorescent label~\cite{Banks1} or whatsoever), which are then related to the empirical distribution~\eqref{eq: random measure}, itself related to $f_t(\cdot)=\f{n_k(t,\cdot)}{\int n_k (t,\cdot)d\cdot}$ thanks to a representation like in Proposition~\ref{prop: first equivalence}.
We may moreover assume an approximation of the form $f_t\approx N_k$ by the use of a time-asymptotic  result such as Theorems~\ref{theo:asymp:age} or~\ref{theo:asympCGY}. Alternatively, we can keep up with a stochastic approach, linking directly the stochastic measure to its limit through probabilistic results such as~\cite{DHKR,hoffmann2016nonparametric,bertoin2020strong}. We then address the inverse problem consisting in estimating $B$ from measurements of $N_k;$ one feels immediately that such an approach requires more analysis and, since the available information is less rich, that the inverse problem is more ill-posed.

\end{itemize}

Note that even in the case of individual dynamics collection, it may be more interesting to use the second approach: if the data are more numerous or less noisy, this may compensate the fact that the information they contain is poorer. In our example of {\it E. coli}, both approaches are possible, which allows to compare their accuracy in practice.

\subsection{Estimating an age-dependent division rate}
\label{subsec:estim:age}
As for the previous sections~\ref{sec:model} and~\ref{subsec:anal:age}, the age-structured model is somehow the simplest model along our line of models for which explicit computations can be conducted.
We review here the different types of inverse problems we have to solve, depending on the type of data available; we will find all the same problems for the other models.

\subsubsection{Individual dynamics data} \label{sec: indiv dyn data}

\paragraph{Individual dynamics data, stochastic viewpoint.}


Let us assume that we have data such as shown in Figures~\ref{fig:microcolony} or~\ref{fig:microflu}: at short time intervals, we observe the age of cells, so that
we observe
$$\big\{\zeta_u,\;\;u \in {\mathcal U}_k\big\},$$
for some subtree ${\cal U}_k \subset {\cal U}$, with $\cal U$ being defined in~\eqref{def:U}.
For the genealogical observation ($k=1$), we define
\begin{equation}
\label{def:U1}
{\cal U}_1=\left\{u_\ell \in \{0,1\}^\ell, \qquad u_{\ell +1}=(u_\ell,u^+), \qquad 0\leq \ell \leq n\, \qquad u^+\in\{0,1\}\right\},
\end{equation}
with $u^+\in \{0,1\}$ chosen uniformly at random, so that $u_{\ell +1}$ is offspring of $u_\ell$, and $n$ is a fixed number given by the experimentalist. In practice, we gather several such trees. For the population observation ($k=2$), the trees are defined by a final time $T>0$ fixed by the experimentalist, so that we observe
\begin{equation}
\label{def:U2}
{\cal U}_2=\left\{u \in {\cal U}, b_u+\zeta_u\leq T
\right\},
\end{equation}
where $b_u$ is the birth time of the cell: we observe all the lifetimes of cells which have divided before $T$. We see that the number of cells is stochastic for this second case, and there is a selection bias: we will observe more descendants of cells which have divided quickly, see Section~\ref{sec:anal}.

\paragraph{Individual dynamics data, stochastic viewpoint, genealogical observation.}
This case is  relatively straightfroward: as already observed, since
$$\PP(\zeta_u \in [a,a+da]\,|\zeta_u \geq a) = B(a)da,$$
 we obtain that the probability distribution of a lifetime $\zeta_u$ is given by
\begin{equation} \label{eq: distrib classical}
\PP(\zeta_u \in da) = f_1(a)da = B(a)\exp\Big(-\int_0^a B(s)ds\Big)da.
\end{equation}
In the case of a genealogical observation, the subtree is deterministic: there is no selection bias and the lifetime of each cell is independent from the others: we observe a sample of $n$ cells having divided at ages which are the
realizations of $\{\zeta_u, u \in \mathcal U_1\}$, as independent random variables with common density $f$. Moreover,  as soon as $\int^\infty B=\infty$, we have the survival analysis representation
\begin{equation}\label{eq:estim:age1}
B(a)=\f{f_1(a)}{\int_a^\infty f_1(s)ds}=\frac{f_1(a)}{S_1(a)},
\end{equation}
where $S_1$ is the survival function, as a simple inversion of the formula \eqref{eq: distrib classical} given above. In other application contexts, $B$ is called a hazard function. In this simple formula, we notice here three important facts, that will be found throughout our study:
\begin{itemize}
\item estimating $B$ has the same complexity as estimating the density $f_1$. This stems from the fact that the survival function $S_1(a)$ can be estimated at rate $\sqrt{n}$ by its empirical counterpart, hence only the numerator in the right-hand side of \eqref{eq:estim:age1} is a genuine nonparametric estimation problem.
\item The estimation of $B(a)$ becomes harder as $a$ increases, since $S(a)$ vanishes when $a$ tends to infinity.
\item We can also interpret our observations directly on the eigenvector equation $N_1:$ the proportion of dividing cells being $B(a)N_1(a),$ we find $B$ by writing simply
$$B(a)=\f{B(a)N_1(a)}{N_1(a)},$$
and using the equation again we find that $N_1(a)=Ce^{-\int_0^a B(s)ds}=CS_1(a),$ with $C>0$ a normalisation constant, so that we are back to~\eqref{eq:estim:age1}.
\end{itemize}

We make the first point rigorous by recalling a standard statistical estimation result. Let $K:[0,\infty)\rightarrow \R$ denote a well-located kernel of order $\ell \geq 0$, namely
\begin{equation}
\label{as:K}
 K \in {\cal C}_c^0(\R), \qquad \int_0^\infty a^k K(a)da = \1_{\{k = 0\}}\;\;\text{for}\;\;k=0,\ldots, \ell.
\end{equation}
The existence of such an oscillating kernel for arbitrary $\ell$ is standard, see {\it e.g.} the textbook \cite{tsybakov2003introduction}.
For $h>0$ the bandwidth, define $K_h(a) = h^{-1}K(h^{-1}a)$ and
\begin{equation}\label{def:Bestim:age1}
\widehat B_{n,h}(a) = \frac{\sum_{u \in \mathcal U_1}K_h(a-\zeta_u)}{\sum_{u \in \mathcal U_1}\1_{\{\zeta_u \geq a\}}},
\end{equation}
(and set $0$ if none of the $\zeta_u$ are above $a$.)
The {\it bias} of $f_1$ at $a$ relative to the approximation kernel $K$ is defined as
$$\mathfrak{b}_h(f_1)(a) = \big|\int_{[0,\infty)}f_1(a')K_h(a-a')da'-f_1(a)\big|.$$
\begin{proposition}\label{prop:estim:age:indiv:gene} We have
\begin{align}
&\E\big[\big||\mathcal U_1|^{-1}\sum_{u \in \mathcal U_1}K_h(a-\zeta_u)-f_1(a)\big|^2\big] \nonumber \\
&\leq \mathfrak{b}_h(f_1)(a)^2+ (nh)^{-1}\sup_{a-a' \in \mathrm{Supp}(K)}B(a')\int_{[0,\infty)}K(a')^2da'  \label{eq: esti kernel}
\end{align}
and
\begin{equation} \label{eq: survival}
\E\big[\big||\mathcal U_1|^{-1}\sum_{u \in \mathcal U_1}\1_{\{\zeta_u \geq a\}}-S_1(a)\big|^2\big] \leq \tfrac{1}{4}n^{-1}.
\end{equation}
\end{proposition}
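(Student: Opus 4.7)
Recall that in the genealogical case the subtree $\mathcal U_1$ defined in \eqref{def:U1} has a fixed, deterministic cardinality $|\mathcal U_1|=n$, and by independence along the single selected lineage together with \eqref{eq: distrib classical}, the random variables $\{\zeta_u, u \in \mathcal U_1\}$ form an i.i.d.\ sample with common density $f_1(a)=B(a)S_1(a)$ and survival $S_1(a)=\exp(-\int_0^a B(s)ds)$. Both estimates will follow from this reduction to a standard i.i.d.\ setting.

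\textbf{Proof of \eqref{eq: survival}.} The empirical survival function $\widehat S_{1,n}(a)=n^{-1}\sum_{u \in \mathcal U_1}\1_{\{\zeta_u \geq a\}}$ is an average of $n$ i.i.d.\ Bernoulli$(S_1(a))$ variables. It is unbiased, so the mean squared error equals the variance, namely $n^{-1}S_1(a)(1-S_1(a))$, which is maximised at $S_1(a)=1/2$ and bounded by $\tfrac{1}{4}n^{-1}$. This yields the second bound without further work.

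\textbf{Proof of \eqref{eq: esti kernel}.} I would use the standard bias--variance decomposition for a kernel density estimator. Setting $\widehat f_{1,n,h}(a)=n^{-1}\sum_{u \in \mathcal U_1}K_h(a-\zeta_u)$, independence gives
\begin{equation*}
\E\big[\big(\widehat f_{1,n,h}(a)-f_1(a)\big)^2\big]
= \mathrm{Var}(\widehat f_{1,n,h}(a))+\big(\E[\widehat f_{1,n,h}(a)]-f_1(a)\big)^2.
\end{equation*}
The bias term equals $\mathfrak{b}_h(f_1)(a)^2$ by definition of $\mathfrak b_h$, since $\E[K_h(a-\zeta_u)]=\int K_h(a-a')f_1(a')da'$. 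For the variance, I would bound it by the second moment of a single summand divided by $n$:
\begin{equation*}
\mathrm{Var}(\widehat f_{1,n,h}(a)) \leq n^{-1}\E[K_h(a-\zeta_1)^2]
= n^{-1}\int K_h(a-a')^2 f_1(a')\,da'.
\end{equation*}
Then I would change variables $u=(a-a')/h$, which turns the integral into $h^{-1}\int K(u)^2 f_1(a-hu)\,du$, and use the pointwise bound $f_1(y)=B(y)S_1(y)\leq B(y)$ to pull out a supremum of $B$ over the effective support. Combining with the bias yields exactly the right-hand side of \eqref{eq: esti kernel}.

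\textbf{Expected difficulty.} There is no real obstacle: both bounds amount to textbook kernel/empirical-process estimates once one has identified that $|\mathcal U_1|=n$ is deterministic and the sample is i.i.d.\ in the genealogical case. The only mildly delicate point is the bookkeeping for the support in the variance term (whether one writes $a-a'\in\mathrm{Supp}(K)$ or $a-a'\in h\,\mathrm{Supp}(K)$), which is an inessential scaling convention; otherwise the calculation is the classical one and no further technicalities arise. The more interesting statistical content -- the behaviour of the plug-in hazard estimator $\widehat B_{n,h}$ in \eqref{def:Bestim:age1} -- is then obtained by combining \eqref{eq: esti kernel} and \eqref{eq: survival} with the representation \eqref{eq:estim:age1}, but this lies beyond the present statement.
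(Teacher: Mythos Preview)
Your proof is correct and follows essentially the same approach as the paper: identify the genealogical sample as i.i.d.\ with density $f_1$, then use the bias--variance decomposition with the crude variance bound $\mathrm{Var}\leq n^{-1}\E[K_h(a-\zeta_1)^2]$, the change of variables, and $f_1\leq B$ for the kernel part; and the Bernoulli$(S_1(a))$ observation with variance $\leq 1/4$ for the survival part. You also correctly flag the $\mathrm{Supp}(K)$ versus $h\,\mathrm{Supp}(K)$ bookkeeping, which is indeed just a scaling convention in the statement.
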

\begin{proof}
The first part is obtained by noticing that $\int_{[0,\infty)}f_1(a')K_h(a-a')da' = \E[K_h(a-\zeta_u)]$, and using that the variables $K_h(a-\zeta_u)-\E[K_h(a-\zeta_u)]$ are independent and identically distributed, with common variance bounded above by $h^{-1}\int_{[0,\infty)} K_h(a-a')^2f_1(a')da' \leq \sup_{a-a' \in \mathrm{Supp}(K)}B(a')\int_{[0,\infty)}K(a')^2da'$. The result is simply a combination of this observation and the act that the variance of the sum of independent random variables is the sum of its variances. The second part easily follows, noticing now that $\1_{\{\zeta_u \geq a\}}$ is a Bernoulli randiom variable with expectation  $S_1(a)$ and variance (always) bounded by $1/4$.
\end{proof}

Assuming that $B$ has smoothness of order $s>0$ around $a$, in a H\"older sense for instance, we then have $\mathfrak{b}_h(f_1)(a) \lesssim h^s$ as soon as $\ell \geq s-1$, and therefore the estimator $|\mathcal U_1|^{-1}\sum_{u \in \mathcal U_1}K_h(a-\zeta_u)$ has pointwise squared risk of order $h^{2s}+(nh)^{-1}$ in the following sense:
$$\big(\inf_{h>0}(h^{s}+(nh)^{-1/2})\big)^{-1}\big(\widehat B_{n,h}(a)-B(a)\big)$$
is bounded in probability as $n \rightarrow \infty$.\\

Finding the optimal bandwidth $h$ leads to the classical rate $\inf_{h>0}(h^{s}+(nh)^{-1/2}) \approx n^{-s/(2s+1)}$ in nonparametric estimation, which is always a slowlier rate of convergence than $n^{-1/2}$. Combining the two estimates \eqref{eq: esti kernel} ans \eqref{eq: survival} for an optimal bandwidth, we see that $\widehat B_{n,h}(a)$ estimates $B(a)$ with optimal (normalised) order $n^{-s/(2s+1)}$. These are classical results in nonparametric estimation, see {\it e.g.} \cite{tsybakov2003introduction,gine2021mathematical} and the references therein, in particular regarding data driven choices of $h$, since the smoothness $s>0$ is only a mathematical construct that has no real meaning in practice.

\paragraph{Individual dynamics data, stochastic viewpoint, population observation.}

Following in spirit Section \ref{sec: renewal} but now with data extracted from $\mathcal U_2$, we first look for the behaviour of empirical sums of the form
$$\mathcal E^T(g,\mathcal U_2) = \frac{1}{|\mathcal U_2|}\sum_{u \in \mathcal U_2} g(\zeta_u),$$
for nice (say bounded) test functions $g:[0,\infty)\rightarrow \R$.
As in Section \ref{sec: renewal}, we also have a many-to-one formula that now reads
\begin{equation} \label{many-to-one cloez}
\E\Big[\sum_{u \in \mathcal U_2}g(\zeta_u^T)\Big]  = \E\Big[\sum_{u \in \mathcal U_2}g(\zeta_u)\Big] = 2^{-1}\int_0^T e^{\lambda_2 s}\E\big[g(\widetilde \chi_s)H_B(\widetilde \chi_s)\big]ds,
\end{equation}
where $(\widetilde \chi_t)_{t \geq 0}$ is the auxiliary one-dimensional auxiliary Markov process
with generator $\mathcal{A}_{H_B}$, see \eqref{def generator}, where $H_B$ is characterised by \eqref{characterisation H_B} above. Assuming again ergodicity, we approximate the right-hand side of \eqref{many-to-one cloez} and obtain (at least heuristically)
\begin{align*}
\E\Big[\sum_{u \in \mathcal U_2}g(\zeta_u)\Big]  & \sim c_B2^{-1} \frac{e^{\lambda_2T}}{\lambda_2} \int_0^\infty g(x)H_B(x)e^{-\int_0^x H_B(u)du}dx \\
 & = c_B \frac{e^{\lambda_2T}}{\lambda_2} \int_0^\infty g(x)e^{-\lambda_2 x}f_{1}(x)dx. \nonumber
\end{align*}
since $H_B(x)\exp(-\int_0^x H_B(y)dy)=2e^{-\lambda_2x}f_1(x)$ by \eqref{characterisation H_B}.
We again have an approximation of the type  $\E[|{\mathcal U}_2|] \sim \kappa_{B}e^{\lambda_2T}$  with another constant $\kappa_B'$
and we eventually expect
\begin{equation*}
\mathcal E^T(g,\mathcal U_2) \sim \mathring{\mathcal E}\big(g\big)  := \frac{c_B}{\lambda_2\kappa_B'}\int_0^\infty g(x)e^{-\lambda_2 x}f_1(x)dx =  2\int_0^\infty g(x)e^{-\lambda_2 x}f_1(x)dx
\end{equation*}
as $T \rightarrow \infty$, where the last equality stems from the identity $c_B=2\lambda_2\kappa'_B$ that can be readily derived by picking $g=1$ and using \eqref{characterisation H_B} together with the fact that $f_2$ is a density function.\\

\begin{proposition}{\bf{(Rate of convergence for particles living at time $T$ - Theorem 4 in \cite{hoffmann2016nonparametric})}} \label{rate avant T}
Assume $\lambda_2 \leq 2\inf_x H_B(x)$,  and $B$  differentiable satisfying $B'(x) \leq B(x)^2$ and $0 < c \leq B(x) \leq 2c$ for every $x \geq 0$ and some $c>0$. Then
$$e^{\lambda_2T/2}\big(\mathcal E^T\big(g\big) - \mathring{\mathcal E}(g)\big)$$
is asymptotically bounded in probability.
\end{proposition}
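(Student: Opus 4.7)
The approach parallels the proof of Proposition~\ref{rate en T}, replacing the set $\mathcal{V}_T$ of cells alive at time $T$ by the subtree $\mathcal{U}_2$ of cells that have completed their lifetime before~$T$. The guiding idea is to decompose
\begin{equation*}
\mathcal{E}^T(g) - \mathring{\mathcal{E}}(g) \;=\; \frac{S_T(g) - \mathring{\mathcal{E}}(g)\,|\mathcal{U}_2|}{|\mathcal{U}_2|}, \qquad S_T(g) := \sum_{u \in \mathcal{U}_2} g(\zeta_u),
\end{equation*}
and to argue separately that $|\mathcal{U}_2|$ concentrates at its mean $\kappa'_B e^{\lambda_2 T}$ and that the numerator has fluctuations only of order $e^{\lambda_2 T/2}$. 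After division, this yields the advertised rate via Chebyshev and Slutsky.

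Concretely, I would proceed as follows. First, using the many-to-one formula~\eqref{many-to-one cloez} combined with the geometric ergodicity of the biased process $(\widetilde\chi_t)_{t\geq 0}$, whose invariant density is proportional to $\exp(-\int_0^x H_B(u)du)$, one obtains
\begin{equation*}
\E[S_T(g)] \;=\; \mathring{\mathcal{E}}(g)\, \kappa'_B e^{\lambda_2 T} + O(1),
\end{equation*}
where the identity $c_B = 2\lambda_2 \kappa'_B$ recalled in the heuristics above is used. The bias is $O(1)$, hence negligible once multiplied by $e^{-\lambda_2 T/2}$. The same reasoning applied to $g\equiv 1$, together with a variance control analogous to Step 3 below, gives $|\mathcal{U}_2|/(\kappa'_B e^{\lambda_2 T}) \to 1$ in probability, and reduces the claim to the tightness of $e^{-\lambda_2 T/2}\bigl(S_T(g)-\mathring{\mathcal{E}}(g)\E[|\mathcal{U}_2|]\bigr)$.

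The heart of the argument is the second-moment estimate
\begin{equation*}
\mathrm{Var}\bigl(S_T(g) - \mathring{\mathcal{E}}(g)\,|\mathcal{U}_2|\bigr) \;\leq\; C\, e^{\lambda_2 T}.
\end{equation*}
This is obtained via a \emph{many-to-two formula} for $\E\bigl[\sum_{u\ne v\in\mathcal{U}_2} h(\zeta_u)h(\zeta_v)\bigr]$, decomposing pairs according to the time $s$ of splitting of their most recent common ancestor: conditionally on that event, the two descending subtrees evolve as independent copies of the biased dynamics started from a common state, contributing an integrand of order $e^{\lambda_2 s}\cdot e^{2\lambda_2(T-s)}$. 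The condition $\lambda_2 \leq 2\inf_x H_B(x)$ is exactly what keeps the resulting double integral at order $e^{2\lambda_2 T}$; after centering by $\mathring{\mathcal{E}}(g)$, the leading $e^{2\lambda_2 T}$ contribution cancels against the square of the mean, leaving residual fluctuations of order $e^{\lambda_2 T}$ only. The two-sided bound $0<c\leq B(x)\leq 2c$ together with $B'\leq B^2$ transfers comparable bounds to $H_B$, ensuring uniform geometric ergodicity of $(\widetilde\chi_t)$ and the finiteness of the moments appearing in the many-to-two identity.

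The principal obstacle is the rigorous derivation of the many-to-two decomposition on the subtree $\mathcal{U}_2$ (with its selection bias toward fast-dividing ancestors) and the careful integration of ergodic bounds against the explicit dependence on the common-ancestor age, tracking constants so that the hypothesis $\lambda_2 \leq 2\inf H_B$ is used precisely to prevent the variance exponent from exceeding $\lambda_2 T$. All remaining steps are either direct transcriptions of the argument for Proposition~\ref{rate en T} or standard applications of Chebyshev's inequality and Slutsky's lemma.
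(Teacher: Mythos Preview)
The paper does not actually supply a proof of this proposition: it is stated with attribution to Theorem~4 of \cite{hoffmann2016nonparametric}, preceded only by the heuristic derivation of $\mathring{\mathcal E}(g)$ via the many-to-one formula~\eqref{many-to-one cloez} and the ergodicity of the biased process $(\widetilde\chi_t)_{t\geq 0}$. So there is no ``paper's own proof'' to compare against; the relevant comparison is with the argument of \cite{hoffmann2016nonparametric} as summarised by the paper's heuristics.

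Your sketch is aligned with that framework and with the standard machinery for such results: a bias--variance decomposition on the centred sum $S_T(g)-\mathring{\mathcal E}(g)|\mathcal U_2|$, control of the mean via many-to-one plus ergodicity, and control of second moments via a many-to-two formula with a most-recent-common-ancestor decomposition. Your identification of where the hypothesis $\lambda_2\leq 2\inf_x H_B(x)$ intervenes (to keep the variance at order $e^{\lambda_2 T}$ rather than a larger exponent) is correct, and the use of the bounds on $B$ to transfer uniform geometric ergodicity to $(\widetilde\chi_t)$ is the right reading. One caveat: you invoke ``the proof of Proposition~\ref{rate en T}'' as a template, but that proposition is likewise stated without proof here, so in a self-contained write-up you would need to supply the many-to-two identity and the ergodic estimates explicitly rather than by reference to the present text. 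Beyond that, nothing in your outline is wrong; the genuine work, as you note, is the rigorous many-to-two computation on $\mathcal U_2$ with its selection bias, which is carried out in \cite{hoffmann2016nonparametric}.
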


\paragraph{Estimation: Step 1). Reconstruction formula for $B(a)$.}
We have
\begin{equation} \label{eq: rep fund}
B(a) = \frac{f_1(a)}{1-\int_0^a f_1(y)dy} = \f{2^{-1}f_2(a) e^{\lambda_2 a}}{1-2^{-1}\int_0^a f_2(y)e^{\lambda_2 y}dy}
\end{equation}
and from the definition
$\mathring{\mathcal E}\big(g\big) = 2\int_0^\infty g(x) e^{-\lambda_2x}f_1(x)dx=$ we obtain the formal reconstruction formula
\begin{equation} \label{formal reconstruction}
B(a) =  \frac{\mathring{\mathcal E}\big(2^{-1}e^{\lambda_2\cdot}\delta_a(\cdot)\big)}{1- \mathring{\mathcal E}\big(2^{-1}e^{\lambda_2\cdot}{\bf 1}_{\{\cdot \leq a\}}\big)}
\end{equation}
where $\delta_a(\cdot)$ denotes the Dirac function at $x$. Therefore, 
taking $g$ as a weak approximation of  $\delta_a$ via a kernel, we obtain a strategy for estimating $B(a)$ replacing $\mathring{\mathcal E}(\cdot)$ by its empirical version ${\mathcal E}^T(\mathcal U_2, \cdot)$.\\

\paragraph{Estimation: Step 2). Construction of a kernel estimator and function spaces}.
Let $K: [0,\infty) \rightarrow \R$ be a kernel function. For $h>0$, set $K_h(x)=h^{-1}K(h^{-1}x)$. In view of \eqref{formal reconstruction}, we define the estimator
\begin{align}\label{def:estimB:age1}
\widehat B_{T,h}(a) & = \frac{{\mathcal E}^T\big(\mathcal U_2, 2^{-1}e^{\lambda_2 \cdot }K_h(a-\cdot)\big)}
{1-{\mathcal E}^T\big(\mathcal U_2, 2^{-1}e^{\lambda_2 \cdot} {\bf 1}_{\{\cdot \leq a\}}\big)}
\end{align}
on the set ${\mathcal E}^T\big(\mathcal U_2, 2^{-1}e^{\lambda_2 \cdot} {\bf 1}_{\{\cdot \leq a\}}\big) \neq 1$ and $0$ otherwise. Thus $\widehat B_{T,h}(a)$ is specified by the choice of the kernel  $K$ and the bandwidth $h>0$.\\



%

\noindent {\it Performances of the Estimator.} We are ready to give the rate of convergence of $\widehat B_T(a)$ for $a$ restricted to a compact interval $\mathcal D$, uniformly over H\" older balls ${\mathcal H}^s_{\mathcal D}$

\begin{proposition}{\bf{(Upper rate of convergence, Theorem 7 in \cite{hoffmann2016nonparametric})}} \label{thm: upper rate}
In the same setting as in Proposition \ref{rate avant T}, specify $\widehat B_{T,h}$ with a kernel satisfying~\eqref{as:K} for some $\ell >1$ and
$$
h=\widehat h_T = \exp\big(-\tfrac{1}{2s+1}\lambda_2 T\big)
$$
for some $s \in (1,\ell+1)$. Then
$$e^{\lambda_2\frac{s}{2s+1}T}\big(\widehat B_{T,h}(a)-B(a)\big)$$
is asymptotically bounded in probability if $B$ is $s$-H\"older in a neigbourhood of $a$.
\end{proposition}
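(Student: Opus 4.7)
The starting point is the reconstruction formula \eqref{formal reconstruction}, which, combined with \eqref{eq: rep fund} and the identity $\mathring{\mathcal E}(2^{-1}e^{\lambda_2\cdot}\delta_a)=f_1(a)$ and $\mathring{\mathcal E}(2^{-1}e^{\lambda_2\cdot}\mathbf 1_{\{\cdot\leq a\}})=1-S_1(a)$, represents $B(a)=f_1(a)/S_1(a)$ as a ratio of two functionals of the observable empirical measure. Writing $\widehat N_h = \mathcal E^T(\mathcal U_2,2^{-1}e^{\lambda_2\cdot}K_h(a-\cdot))$, $N_h=\mathring{\mathcal E}(2^{-1}e^{\lambda_2\cdot}K_h(a-\cdot))=(K_h\ast f_1)(a)$, $\widehat D = 1-\mathcal E^T(\mathcal U_2,2^{-1}e^{\lambda_2\cdot}\mathbf 1_{\{\cdot\leq a\}})$ and $D=S_1(a)$, the algebraic decomposition
\begin{equation*}
\widehat B_{T,h}(a)-B(a) = \frac{\widehat N_h - N_h}{\widehat D} + \frac{N_h-f_1(a)}{\widehat D} + \frac{f_1(a)}{\widehat D\, D}\big(D-\widehat D\big)
\end{equation*}
separates the analysis into three independently controllable pieces: a stochastic fluctuation in the numerator, a deterministic bias, and a stochastic fluctuation in the denominator.

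The bias term is handled first since it is purely analytic. Under the standing assumptions, $f_1(x)=B(x)\exp(-\int_0^x B(u)du)$ inherits the $s$-H\"older regularity of $B$ on a neighbourhood of $a$ (the exponential factor is smooth and bounded). Since $K$ satisfies the vanishing moment conditions \eqref{as:K} up to order $\ell$ with $\ell+1>s$, a Taylor expansion of $f_1$ at $a$ inside $\int K_h(a-x)f_1(x)dx$ gives $|N_h-f_1(a)|\lesssim h^s$. For the denominator, $g(x)=2^{-1}e^{\lambda_2 x}\mathbf 1_{\{x\leq a\}}$ is bounded on the compact neighbourhood of $a$, so Proposition \ref{rate avant T} applies directly and yields $|\widehat D-D|=O_{\PP}(e^{-\lambda_2 T/2})$. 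Combined with $D=S_1(a)>0$ (which follows from $\int^\infty B=\infty$), this shows that $\widehat D$ stays bounded away from zero in probability and that the denominator contribution is $O_{\PP}(e^{-\lambda_2 T/2})$.

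The delicate piece is the numerator fluctuation $\widehat N_h-N_h$, because the test function $g_h(x)=2^{-1}e^{\lambda_2 x}K_h(a-x)$ depends on $T$ through the bandwidth and its $L^2$-norm blows up as $h\to 0$. I would adapt the argument behind Proposition \ref{rate avant T} by going back to the many-to-one identity \eqref{many-to-one cloez}: after recentring, $|\mathcal U_2|(\widehat N_h-N_h)$ is a sum over the tree whose quadratic variation, via the biased auxiliary process $(\widetilde\chi_t)$ and the ergodicity estimate $\E[|\mathcal U_2|]\sim\kappa_B e^{\lambda_2 T}$, scales like $e^{\lambda_2 T}\mathring{\mathcal E}(g_h^2)$. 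Under the bounds $0<c\leq B\leq 2c$ the stationary density of $\widetilde\chi$ is comparable to $e^{-\lambda_2 x}f_1$, so $\mathring{\mathcal E}(g_h^2)\lesssim h^{-1}\|K\|_{L^2}^2$. Dividing by $|\mathcal U_2|^2\sim e^{2\lambda_2 T}$ and taking square roots gives $|\widehat N_h-N_h|=O_{\PP}(e^{-\lambda_2 T/2}h^{-1/2})$.

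Collecting the three bounds,
\begin{equation*}
|\widehat B_{T,h}(a)-B(a)| = O_{\PP}\big(h^s + e^{-\lambda_2 T/2}h^{-1/2} + e^{-\lambda_2 T/2}\big),
\end{equation*}
and the bias–variance balance $h^s = e^{-\lambda_2 T/2}h^{-1/2}$ yields exactly the prescribed bandwidth $\widehat h_T=\exp(-\lambda_2 T/(2s+1))$ with rate $h^s = \exp(-\lambda_2 s T/(2s+1))$, which dominates the $e^{-\lambda_2 T/2}$ contribution from the denominator since $s>1$ implies $s/(2s+1)<1/2$. The main obstacle is the rigorous extension of Proposition \ref{rate avant T} to the $T$-dependent, unbounded-in-$h$ kernel test function $g_h$: one must control the variance of the empirical sum uniformly in $h$, which requires the quantitative ergodicity of $\widetilde\chi$ together with the two-sided bounds on $B$ to guarantee that the stationary measure of $\widetilde\chi$ has a density comparable to $e^{-\int_0^\cdot H_B}$ near the evaluation point $a$.
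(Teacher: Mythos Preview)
The paper does not give its own proof of this proposition, deferring entirely to \cite{hoffmann2016nonparametric} (and \cite{hoffmann2018statistical} for a condensed version). Your bias--variance decomposition into the three pieces $(\widehat N_h-N_h)/\widehat D$, $(N_h-f_1(a))/\widehat D$, and $f_1(a)(D-\widehat D)/(\widehat D D)$ is exactly the route taken there, and you correctly isolate the only nontrivial step: extending the $e^{\lambda_2 T/2}$-rate of Proposition~\ref{rate avant T} to the $T$-dependent test function $g_h$ whose $L^2$-norm blows up like $h^{-1/2}$.

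One refinement worth flagging: your heuristic ``quadratic variation scales like $e^{\lambda_2 T}\mathring{\mathcal E}(g_h^2)$'' glosses over the fact that the summands $g_h(\zeta_u)$ over $u\in\mathcal U_2$ are \emph{not} independent, so the variance is not simply a sum of marginal second moments. In \cite{hoffmann2016nonparametric} this is handled by a second-moment (many-to-two) computation: one decomposes $\E\big[\big(\sum_{u\in\mathcal U_2}g_h(\zeta_u)\big)^2\big]$ according to the most recent common ancestor of each pair $(u,v)$, and the geometric ergodicity of the biased chain $(\widetilde\chi_t)$ together with the condition $\lambda_2\le 2\inf_x H_B(x)$ ensures that the off-diagonal contribution does not dominate the diagonal one. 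The resulting bound is indeed of order $e^{\lambda_2 T}h^{-1}$, matching your conclusion, but the mechanism is the control of pairwise correlations via the tree structure rather than a direct quadratic-variation argument. Your final balance and the remark that the denominator term $e^{-\lambda_2 T/2}$ is negligible since $s/(2s+1)<1/2$ are correct (in fact this inequality holds for all $s>0$, not just $s>1$).
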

This rate is indeed optimal in a minimax sense, see Theorem 8 in \cite{hoffmann2016nonparametric}, where the problem of estimating $\lambda_2$ is also considered. The proof of Proposition \ref{thm: upper rate} is detailed in \cite{hoffmann2016nonparametric}. In a more condensed way, they can also be found in \cite{hoffmann2018statistical}.

\paragraph{Individual dynamics data, deterministic viewpoint.}

In the field of "deterministic" inverse problems, we model the noise by assuming that we observe data in a certain metric space up to an error $\ep$ according to this metric. In our case, this means that we first assume  that the population has reached its steady asymptotic behaviour given by Theorem~\ref{theo:asymp:age}, and second that there exists a (known) noise level $\ep>0,$ and that we observe the distribution of ages of dividing cells, defined by
$$f_k(a):=\f{B(a)N_k(a)}{\int B(a)N_k(a)da},$$
up to a noise, {\it i.e.} the measurement $H_k^\ep(a)$ is such that
$$\Vert f_k^\ep - f_k \Vert_{W^{-s,p} ([0,\infty))} \leq \ep,$$
with $s\geq 0,$ $1\leq p\leq \infty$ and $W^{-s,p}([0,\infty))$ the corresponding Sobolev space.
Integrating~\eqref{eq:age:eigen} to express $N_k(a)$ in terms of $B(a)N_k(a)$ and $\lambda_k$, chosen with $k=1$ or $k=2$ according to the observation scheme considered, we find the formula
\begin{equation}\label{def:B:direct}
B(a)=\f{B(a)N_k(a)}{N_k (a)}=\f{B(a)N_k(a)}{e^{-\lambda_k a} \int_a^\infty B(s)N_k(s)e^{\lambda_k s}ds}=\f{f_k(a)}{e^{-\lambda_k a} \int_a^\infty f_k(s)e^{\lambda_k s} ds},
\end{equation}
where we recognize~\eqref{eq:estim:age1} if $k=1$ and~\eqref{formal reconstruction} if $k=2.$ This naturally leads us to define an estimate $B_\ep$ by replacing in this formula $f_k$ by $f_k^\ep,$ and add a threshold condition for the denominator, as done above by considering compact intervals.  If $s=0,$ {\it i.e.} if the noise lies in $L^p([0,\infty)),$ we do not need to regularize this estimate: the problem is well-posed, and $B_\ep$ provides directly an estimate for $B$ in a space $L^p([0,\infty))$ weighted by $N_k.$ If either $s<0$ or we want an estimate for $B$ in some $W^{m,p}$ space with $m>0,$ then a regularization is needed: in exactly the same spirit as for kernel density estimation, we can define
\begin{equation}\label{def:fkeph}f_{k}^{\ep,h}=K_h * f_k^\ep,
\end{equation}
and we have the following result, deterministic version of the above Propositions~\ref{prop:estim:age:indiv:gene} and~\ref{rate avant T}. \begin{proposition}\label{prop:estim:age:direct:det}Let $K$ a kernel satisfying~\eqref{as:K}, $K\in C^1_b(\R),$ and $K_h(\cdot)=1/h K(\cdot/h).$ Let $1\leq p\leq \infty$ and $\theta\in [0,1].$ We have the estimate
$$\Vert f_k^{\ep,h} - f_k \Vert_{L^p}
\leq \Vert K_h * f_k - f_k\Vert_{L^p} + C(K) h^{-\theta} \Vert f_k - f_k^{\ep} \Vert_{W^{-\theta,p}},
$$
where $C(k)$ is a constant depending only on the kernel $K$ and on its derivative.
\end{proposition}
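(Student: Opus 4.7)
The plan is a standard bias–variance style decomposition for kernel regularisation, the only subtlety being the negative Sobolev exponent in the noise term. First I would use the definition $f_k^{\ep,h}=K_h*f_k^\ep$ and the triangle inequality to split
\[
f_k^{\ep,h}-f_k \;=\; \bigl(K_h*f_k-f_k\bigr) \;+\; K_h*\bigl(f_k^\ep-f_k\bigr),
\]
so that
\[
\Vert f_k^{\ep,h}-f_k\Vert_{L^p} \;\leq\; \Vert K_h*f_k-f_k\Vert_{L^p} \;+\; \Vert K_h*(f_k^\ep-f_k)\Vert_{L^p}.
\]
The first term is exactly the bias term of the statement and requires no further work: it is the classical approximation error of the mollifier $K_h$, whose rate is controlled by the smoothness of $f_k$ and the order $\ell$ of the kernel through \eqref{as:K}.

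The second term is where the weight $h^{-\theta}$ appears. I would proceed by duality: writing $\tilde K_h(x)=K_h(-x)$ and using Fubini,
\[
\int (K_h*g)(x)\phi(x)\,dx \;=\; \langle g,\,\tilde K_h*\phi\rangle,
\]
so that
\[
\Vert K_h*g\Vert_{L^p} \;=\; \sup_{\Vert\phi\Vert_{L^{p'}}=1}\langle g,\tilde K_h*\phi\rangle \;\leq\; \Vert g\Vert_{W^{-\theta,p}}\,\Vert \tilde K_h*\phi\Vert_{W^{\theta,p'}},
\]
with $1/p+1/p'=1$ and the supremum is over $\phi\in W^{\theta,p'}$ of unit norm. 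A straightforward application of Young's convolution inequality then yields
\[
\Vert \tilde K_h*\phi\Vert_{W^{\theta,p'}} \;\leq\; \Vert \tilde K_h\Vert_{W^{\theta,1}}\,\Vert\phi\Vert_{L^{p'}},
\]
which I would justify for integer $\theta$ by commuting derivatives across the convolution and for $\theta\in[0,1]$ by real interpolation between the cases $\theta=0$ and $\theta=1$. Finally, the scaling $K_h(\cdot)=h^{-1}K(\cdot/h)$ gives $\Vert K_h\Vert_{L^1}=\Vert K\Vert_{L^1}$ and $\Vert K_h'\Vert_{L^1}=h^{-1}\Vert K'\Vert_{L^1}$, so that
\[
\Vert K_h\Vert_{W^{\theta,1}} \;\leq\; C(K)\,h^{-\theta},\qquad \theta\in[0,1],
\]
with $C(K)$ depending only on $\Vert K\Vert_{L^1}$ and $\Vert K'\Vert_{L^1}$, which is legitimate under the hypothesis $K\in C^1_b(\R)$ with compact support. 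Combining these three ingredients produces the announced bound on the second term.

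The whole argument is standard and the only potentially delicate step is the interpolation to fractional $\theta\in(0,1)$: if one prefers to avoid it, one can define $W^{-\theta,p}$ via the Fourier-side characterisation $\Vert\langle\xi\rangle^{-\theta}\hat g\Vert_{L^p}$ (when $p=2$) or via a Littlewood–Paley decomposition in general, in which case the estimate $\Vert K_h*g\Vert_{L^p}\leq Ch^{-\theta}\Vert g\Vert_{W^{-\theta,p}}$ follows by distributing $\langle\xi\rangle^\theta$ across the convolution and using Young on each dyadic block. Either route closes the proof.
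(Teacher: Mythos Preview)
Your proposal is correct and follows precisely the standard route the paper has in mind. In fact, the paper does not spell out a proof of this proposition at all: it states the estimate and immediately moves on, and for the closely related Proposition~\ref{prop:estim:age:inv:det} the proof is explicitly ``let to the reader; it is exactly the same ingredients as before, namely standard convolution inequalities.'' Your triangle-inequality decomposition into bias plus noise, followed by duality, Young's inequality, and the scaling $\Vert K_h\Vert_{W^{\theta,1}}\lesssim h^{-\theta}$ via interpolation between $\theta=0$ and $\theta=1$, is exactly that standard machinery.
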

We do not specify here the standard machinery to obtain an estimate for $B$ from the estimate for $f_k:$ it consists in dividing $f_k^{\ep,h}$ by $N_k^\ep$ - we do not need any regularisation for the denominator, thanks to the integral and to the choice $\theta\leq 1$ - and then thresholding. See for instance~\cite{DHRR}.

We then find that, for a noise $\ep$ in the space $W^{-\theta,p},$ {\it i.e.} if we have
$$ \Vert f_k - f_k^{\ep} \Vert_{W^{-\theta,p}}\leq \ep,$$
and if we assume $f\in W^{s,p}$ with $\ell \geq s-1,$ the optimal estimate is in the order of $\ep^{s/(s+\theta)},$ and achieved for $h\approx \ep^{s/(s+\theta)}.$ We first notice that if $\theta=0$ (noise in $L^2$), this speed is of order $\ep:$ we do not need any regularisation, and we face a well-posed inverse problem!

We notice that this result is fully coherent with the statistical estimates of Propositions~\ref{prop:estim:age:indiv:gene} and~\ref{rate avant T}: to see it, the correct heuristics consists in  taking $\theta=\f{1}{2}$ (for a heuristics of the regularity of the empirical measure) and $\ep= n^{-1/2}$ (the noise level being given by a central limit theorem), see~\cite{NP} for an illuminating explanation of this comparison.  We then have an estimate in the order of
$$\ep^{s/(s+\theta)}=\ep^{s/(s+1/2)}=n^{-s/(2s+1)},$$
as above. We further develop these heuristics or comparison between stochastic and deterministic noise in Section \ref{subsubsec:estim:size:pop} below.\\

Finally, we note that the proof of Proposition~\ref{prop:estim:age:direct:det} is not more involved for $k=2$ than for $k=1,$ contrarily to the stochastic setting where the selection bias and the dependence between the individuals in the population case make it much more complex.
\subsubsection{Population point data}
\label{subsubsec:estim:age:pop}
Let us imagine that we are given a noisy measurement of the distribution of cells $N_k(a).$ This noise may be modeled by three different settings, increasingly realistic:
\begin{enumerate}
\item deterministic noise model: we model the noise by a measurement $N_k^\ep$ such that
$$\Vert N_k^\ep -N_k\Vert_{W^{-s,p}([0,\infty))} \leq \ep.$$
\item Stochastic sampling noise: we assume that we observe a sample of ages $a_1,\cdots, a_n$ realisations of $A_1,\cdots,A_n$ {\it i.i.d.} random variables of density $N_k.$ We could refine this setting by adding a measurement noise to each $a_i.$
\item Stochastic process: we observe, at a given time $T,$ a sample of ages of cells. This means that we observe $\{a_u\},\;u\in {\cal U}_3$  defined by
$$a_u=T-b_u,\qquad u\in {\cal U}_3=\left\{ u\in {\cal U}, \quad b_u \leq T <b_u+\zeta_u\right\}.$$
\end{enumerate}
For the third model, one first needs to establish asymptotic results as done in~\cite{hoffmann2016nonparametric} given in Proposition \ref{rate en T} above for individual dynamics data.
Having
$$\mathcal E(g) = 2\lambda_2\int_0^\infty g(x)e^{-\lambda_2 x}\exp\big(-\int_0^x B(y)dy\big)dx $$
and ignoring the fact that $\lambda_2$ is unknown, we can anticipate that by picking a suitable test function $g$ as a kernel, the information about $B(x)$ can only be inferred through $\exp(-\int_0^x B(y)dy)$. More precisely, consider the quantity
$$\widehat f_{h,T}(x) = - \mathcal E^T\Big(\lambda_2\big(K_{h}\big)'(x-\cdot)\Big)$$
for a kernel satisfying~\eqref{as:K}.
By Proposition~\ref{rate en T} and integrating by part, we readily see that
\begin{equation} \label{conv ill posed}
\widehat f_{h,T} \mapsto -\mathcal E\Big(\frac{1}{\lambda_2}\big(K_{h}\big)'(x-\cdot)\Big)=\int_0^\infty K_h(x-y)f_{B+\lambda_2}(y)dy
\end{equation}
in probability as $T \rightarrow \infty$, where $f_{B+\lambda_2}$ is the density associated to the division rate $B(x)+\lambda_2$. On the one hand, using following line by line the proofs of \cite{hoffmann2016nonparametric} it is not difficult to show that the rate of convergence in \eqref{conv ill posed} is of order $h^{-3/2}e^{\lambda_2 T/2}$ since we take the derivative of the kernel $K_h$.
 On the other hand, the limit  $\int_0^\infty K_h(x-y)f_{B+\lambda_2}(y)dy$ approximates $f_{B+\lambda_2}(x)$ with an error of order $h^s$ if $B$ is $s-$H\"older. Balancing the two error terms in $h$, we see that we can estimate $f_{B+\lambda_2}(x)$ with an error of (presumably optimal) order $\exp(-\lambda_2\tfrac{s}{2s+3}T)$. Due to the fact that the denominator in representation \eqref{eq: rep fund} can be estimated with parametric error rate $\exp(-\lambda_2 T/2)$  (possibly up to polynomially slow terms in $T$), we end up with the rate of estimation $\exp(-\lambda_2\frac{s}{2s+3}T)$ for $B(x)$ as well, and that can be related to an ill-posed problem of order 1 (see for instance~\cite{tsybakov2003introduction}). This phenomenon, namely the structure of an ill-posed problem of order 1 in restriction to data alive at time $T$, appears in the other settings: for the estimation of a size-division rate from living cells at a given large time in \cite{DPZ,DHRR} or for the estimation of the dislocation measure for a homogeneous fragmentation, see \cite{hoffmann2011statistical}.\\
%

For the first and second noise models,  we use the explicit formula~\eqref{def:eigen:age} to get
\begin{equation}\label{def:B:inverse}
B(a)=-\lambda_k - \f{\p_a N_k (a)}{N_k(a)}
\end{equation}
which is equivalent to~\eqref{conv ill posed}.
As for individual dynamics data, we see that we need to divide by the density, so that we will not be able to estimate $B$ at places where it vanishes. The new fact is that, contrarily to Formula~\eqref{def:B:direct}, the formula depends on the age-derivative of $N_k,$ so that, as shown below in Proposition~\ref{prop:estim:age:inv:det},
the so-called {\it degree of ill-posedness} of the inverse problem is the one of estimating a function from its derivative: as for the third model, more regularisation is needed. We obtain the two following propositions.

\begin{proposition}{\bf{(Deterministic noise)}}\label{prop:estim:age:inv:det}
Under the assumptions of Proposition~\ref{prop:estim:age:direct:det}, defining
$$H_{\ep,h}(a):=-\lambda_k^\ep N_k^\ep (a)- (K_h * \p_a N_k^\ep )(a),\qquad H(a)=B(a)N_k(a)$$
 we have
 $$\begin{array}{c}\Vert H_{\ep} - H\Vert_{L^p} \leq
 C(K,N_k)\left(\Vert K_h*N_k -N_k\Vert_{L^p} \right.
 \\ \\ \left. +\vert \lambda_k^\ep -\lambda_k\vert +
 h^{-\theta -1} \Vert N_k^\ep -N_k\Vert_{W^{-\theta,p}}
 \right)
 \end{array}$$
 with $C(K,N_k)$ depending only on $K,$ $K'$ and $N_k.$
\end{proposition}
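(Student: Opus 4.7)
The starting point is the eigenvalue equation \eqref{eq:age:eigen}, which may be rewritten as
\[
H(a) = B(a)N_{k}(a) = -\lambda_{k}N_{k}(a) - \partial_{a}N_{k}(a),
\]
so that the estimator $H_{\ep,h}$ is obtained by plugging $(\lambda_{k}^{\ep}, N_{k}^{\ep})$ in place of $(\lambda_{k},N_{k})$ and regularising the derivative by the kernel $K_{h}$. The natural decomposition is then
\[
H_{\ep,h}-H = (\lambda_{k}-\lambda_{k}^{\ep})N_{k}^{\ep} + \lambda_{k}(N_{k}-N_{k}^{\ep}) - \bigl(K_{h}*\partial_{a}N_{k}-\partial_{a}N_{k}\bigr) - K_{h}*\partial_{a}(N_{k}^{\ep}-N_{k}),
\]
and I would bound each of the four pieces separately in $L^{p}$.

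\textbf{Easy terms.} The first term is controlled by $|\lambda_{k}-\lambda_{k}^{\ep}|\bigl(\|N_{k}\|_{L^{p}} + \|N_{k}-N_{k}^{\ep}\|_{L^{p}}\bigr)$, with $\|N_{k}\|_{L^{p}}$ absorbed into $C(K,N_{k})$; the second term is of the same kind. For the fourth (noise-propagation) term, I would integrate by parts inside the convolution to move the derivative onto the kernel, using that for compactly supported $K$,
\[
K_{h}*\partial_{a}(N_{k}^{\ep}-N_{k}) = K_{h}'*(N_{k}^{\ep}-N_{k}),
\]
and then apply a Young-type inequality in the Sobolev scale, $\|K_{h}'*g\|_{L^{p}}\leq \|K_{h}'\|_{W^{\theta,1}}\|g\|_{W^{-\theta,p}}$. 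The scaling $K_{h}(\cdot) = h^{-1}K(\cdot/h)$ gives $\|K_{h}'\|_{W^{\theta,1}} \lesssim h^{-(1+\theta)}\|K\|_{C^{1+\lceil\theta\rceil}}$, which produces exactly the prefactor $h^{-\theta-1}$ appearing in the statement. The contribution $\|N_{k}-N_{k}^{\ep}\|_{L^{p}}$ left over from the first two terms is then absorbed in the same way, using that $\|\cdot\|_{L^{p}}\sim\|\mathrm{Id}*\cdot\|_{L^{p}}$ is dominated by the smoothed version $h^{-\theta}\|\cdot\|_{W^{-\theta,p}}$ up to the kernel constant (this is precisely the estimate of Proposition~\ref{prop:estim:age:direct:det}).

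\textbf{Main obstacle: the bias.} The delicate piece is the deterministic bias $K_{h}*\partial_{a}N_{k}-\partial_{a}N_{k}$. Naive use of the moment conditions \eqref{as:K} applied to $\partial_{a}N_{k}$ would give a bound of order $h^{s-1}$ when $N_{k}\in W^{s,p}$, that is, one order worse than the target $\|K_{h}*N_{k}-N_{k}\|_{L^{p}}$. The trick I would use is to exploit the eigenvalue relation once more: since $\partial_{a}N_{k}=-(\lambda_{k}+B)N_{k}$, one has
\[
K_{h}*\partial_{a}N_{k}-\partial_{a}N_{k} = -(\lambda_{k}+B)\bigl(K_{h}*N_{k}-N_{k}\bigr) - \bigl[K_{h}*((\lambda_{k}+B)N_{k}) - (\lambda_{k}+B)\,K_{h}*N_{k}\bigr],
\]
where the first summand is already of the required form and the commutator in brackets is controlled via the localised regularity of $B$ and $N_{k}$ (a standard commutator estimate with a mollifier, giving a norm dominated by $\|K_{h}*N_{k}-N_{k}\|_{L^{p}}$ up to a constant depending on $B$ and the support of $K$). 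The dependence on $N_{k}$ (and on $\lambda_{k}$, $B$) is harmless and gets bundled into $C(K,N_{k})$. Combining the four bounds yields the stated estimate, with $C(K,N_{k})$ depending only on $\|K\|_{W^{1+\theta,1}}$, $\|N_{k}\|_{L^{p}}$, and the local multipliers induced by $B$ and $\lambda_{k}$.
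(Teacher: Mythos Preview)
The paper leaves this proof entirely to the reader, saying only that it uses ``exactly the same ingredients as before, namely standard convolution inequalities'' in the spirit of Proposition~\ref{prop:estim:age:direct:det}. Your four--term decomposition and your treatment of the noise piece via $K_h*\partial_a g=(K_h)'*g=h^{-1}(K')_h*g$, followed by the $W^{-\theta,p}\!\to L^p$ estimate of that earlier proposition applied to $(K')_h$, are exactly the intended argument; nothing more is needed on that side.

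Where your write--up overreaches is the bias. You are right that the natural decomposition produces $\|K_h*\partial_a N_k-\partial_a N_k\|_{L^p}$, one power of $h$ worse than the $\|K_h*N_k-N_k\|_{L^p}$ printed in the statement, and you try to recover that power with a commutator. But the commutator does not give what you claim: writing $\partial_a N_k=-(\lambda_k+B)N_k$, a Friedrichs--type estimate yields only
\[
\bigl\|K_h*((\lambda_k+B)N_k)-(\lambda_k+B)\,K_h*N_k\bigr\|_{L^p}\ \lesssim\ h\,\|B'\|_{L^\infty_{\mathrm{loc}}}\,\|N_k\|_{L^p},
\]
which is $O(h)$, not $O(\|K_h*N_k-N_k\|_{L^p})$. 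As soon as $N_k$ has more than one derivative ($s>1$) the commutator dominates, and in fact there is no inequality $\|K_h*\partial_a f-\partial_a f\|_{L^p}\le C\|K_h*f-f\|_{L^p}$ with $C$ independent of $h$.

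The honest resolution is that the printed statement is slightly loose: the bias term should read $\|K_h*\partial_a N_k-\partial_a N_k\|_{L^p}$ (equivalently $\|K_h*H-H\|_{L^p}$ up to the harmless $\lambda_k$--piece), and the rate $\ep^{s/(s+\theta+1)}$ quoted after the proposition is then consistent provided $s$ is read as the regularity of $H=BN_k$ (equivalently of $\partial_a N_k$). Your proof is complete once you replace the target bias by $\|K_h*\partial_a N_k-\partial_a N_k\|_{L^p}$ and simply drop the commutator paragraph.
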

The proof is let to the reader; it is exactly the same ingredients as before, namely standard convolution inequalities. We see that for $N_k\in W^{s,p}$ and $\ell \geq s-1,$ and an error $\ep$ in $W^{-\theta,p},$ we have an optimal estimate in the order of $\ep^{\f{s}{s+\theta+1}},$ corresponding, for $\theta=0,$ to an inverse problem of degree of ill-posedness $1.$

\begin{proposition}{\bf{(Stochastic sampling noise)}} Under the assumptions of Proposition~\ref{prop:estim:age:inv:det}, assume that we know $\lambda_k$ from previous observations, and that we observe an {\it i.i.d} sample $a_1,\cdots a_n$ of law $N_k,$ and define the empirical measure
$$N_{n} (da)=\f{1}{n}\sum_{i=1}^n \delta_{a_i}(da)$$
and its regularisation
$$N_{n,h} (a)da=K_h * \biggl(\f{1}{n}\sum\limits_{i=1}^n \delta_{a_i}(da)\biggr)=\f{1}{n}\sum\limits_{i=1}^n K_h(a-a_i)da$$
We define
$$H_{n,h}(a):=-\lambda_k - N_{n,h} (a) -\p_a N_{n,h}(a),\qquad H(a):=B(a)N_k(a),$$
and the bias
$$\mathfrak{b}_h(N_k)(a) = \big|(K_h*N_k)(a)-N_k(a)\big|.$$
We have the following estimate
\begin{align}
\E\big[\big|H_{n,h}(a)-H(a)\big|^2\big]
\leq \mathfrak{b}_h(N_k)(a)^2+ C(N_k,K)\f{1}{nh^3},  \label{eq: esti kernel inv}
\end{align}
where $C(B,K)$ depends only on $N_k$, $K$ and $K'$.
\label{prop:estim:age:stat:det}
\end{proposition}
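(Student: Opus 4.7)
The plan is the classical bias-variance decomposition,
\[\E\bigl[|H_{n,h}(a)-H(a)|^2\bigr] = \bigl(\E[H_{n,h}(a)]-H(a)\bigr)^2 + \mathrm{Var}\bigl(H_{n,h}(a)\bigr),\]
treating the bias via the eigenvalue equation~\eqref{eq:age:eigen} and the variance via independence of the $A_i$'s. The key point — and the only thing that is not completely routine — is that the bias collapses onto a standard kernel approximation error, despite the fact that $H_{n,h}$ involves a derivative of the empirical density.

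For the bias, since the $A_i$ are i.i.d.\ with density $N_k$, one has $\E[K_h(a-A_1)] = (K_h*N_k)(a)$; commuting convolution and differentiation (valid under the smoothness of $N_k$ and the compact support of $K$) also gives $\E[\p_a K_h(a-A_1)] = (K_h*\p_a N_k)(a)$. Using~\eqref{eq:age:eigen} rewritten as $-\lambda_k N_k - \p_a N_k = B N_k = H$ then yields the clean identity
\[\E[H_{n,h}(a)] \;=\; -\lambda_k(K_h*N_k)(a) - (K_h*\p_a N_k)(a) \;=\; (K_h*H)(a),\]
so the bias is just $(K_h*H)(a)-H(a)$, the ordinary kernel smoothing error of $H$. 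Under the assumed regularity of $B$ this quantity is of the same order as the kernel bias of $N_k$ and is controlled, up to a constant depending on $K$, $\lambda_k$ and $N_k$, by $\mathfrak{b}_h(N_k)(a)$.

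For the variance, write $H_{n,h}(a) = n^{-1}\sum_{i=1}^n \Psi_h(a-A_i)$ with $\Psi_h := -\lambda_k K_h - K'_h$, so that by independence
\[\mathrm{Var}\bigl(H_{n,h}(a)\bigr) \;\leq\; \tfrac{2}{n}\bigl(\lambda_k^2\,\E[K_h(a-A_1)^2] + \E[K'_h(a-A_1)^2]\bigr).\]
Changes of variables, together with the uniform bound on $N_k$ on the support of $K_h(a-\cdot)$, give $\E[K_h(a-A_1)^2] \leq \|N_k\|_\infty\|K\|_{L^2}^2\, h^{-1}$ and the key bound $\E[K'_h(a-A_1)^2] \leq \|N_k\|_\infty\|K'\|_{L^2}^2\, h^{-3}$ (the extra $h^{-2}$ coming from differentiating the kernel). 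The second term dominates as $h\to 0$, producing the factor $(nh^3)^{-1}$. Combining with the bias bound gives the announced inequality.

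The main obstacle is the identity $\E[H_{n,h}(a)] = (K_h*H)(a)$: without invoking the eigen-equation the bias would feature $K_h*\p_a N_k$ and one would typically expect extra negative powers of $h$ in it; it is precisely the eigen-relation that converts $\p_a N_k$ into the zeroth-order object $-\lambda_k N_k - BN_k$ and forces the bias to behave like a genuine approximation error of $H$. Beyond this the argument is standard kernel calculus, and the deterioration from the $(nh)^{-1}$ variance of Proposition~\ref{prop:estim:age:indiv:gene} to $(nh^3)^{-1}$ here is the statistical signature of an inverse problem of degree of ill-posedness one — matching, under the heuristic $\ep = n^{-1/2}$, $\theta = 1/2$, the deterministic rate of Proposition~\ref{prop:estim:age:inv:det}.
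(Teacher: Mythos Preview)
Your approach is exactly what the paper intends: it only says the proof is ``close to the proof of Proposition~\ref{prop:estim:age:indiv:gene}'' and that the variance inflation from $(nh)^{-1}$ to $(nh^3)^{-1}$ comes from differentiating the kernel estimator. Your bias--variance decomposition, the identification $\E[H_{n,h}(a)]=(K_h*H)(a)$ via the eigen-equation, and your variance computation (with the $h^{-3}$ coming from $\E[K_h'(a-A_1)^2]$) are all correct and fill in precisely what the paper leaves implicit.

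One caveat on the bias step: you correctly obtain that the bias equals $(K_h*H)(a)-H(a)$, but the assertion that this is pointwise controlled by $\mathfrak{b}_h(N_k)(a)$ up to a constant is not justified and is not literally true. Writing $H=-\lambda_k N_k-\partial_a N_k$ gives
\[
(K_h*H-H)(a)=-\lambda_k\bigl(K_h*N_k-N_k\bigr)(a)-\bigl(K_h*\partial_a N_k-\partial_a N_k\bigr)(a),
\]
and the second piece is $\mathfrak{b}_h(\partial_a N_k)(a)$, which has no reason to be dominated pointwise by $\mathfrak{b}_h(N_k)(a)$. The paper's stated bound carries the same imprecision (as does the very definition of $H_{n,h}$, where $-\lambda_k - N_{n,h}$ is a typo for $-\lambda_k N_{n,h}$, which you silently corrected). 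The honest bias term is $\mathfrak{b}_h(H)(a)^2$, or equivalently a constant times $\mathfrak{b}_h(N_k)(a)^2+\mathfrak{b}_h(\partial_a N_k)(a)^2$; with that reading your argument is complete and the subsequent rate $n^{-s/(2s+3)}$ follows with $s$ interpreted as the smoothness of $H$.
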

The proof is close to the proof of Proposition~ \ref{prop:estim:age:indiv:gene}. The inflation in the variance term from the order $(nh)^{-1}$ to $(nh^3)^{-1}$ comes from the fact that we take a derivative $\p_a N_{n,h}(a)$ of the kernel estimator $N_{n,h} (a)$.
As previously seen, the bias is of order $h^s$ if $N_k \in W^{s,p}$ and $\ell\geq s-1;$ this leads to an optimal error ({\it i.e.} the square root of the left-hand side in~\eqref{eq: esti kernel inv}) in the order of $n^{-\f{s}{2s+3}}.$ Taking in Proposition~\ref{prop:estim:age:inv:det} $\theta=1/2$ and $\ep=n^{-1/2},$ we have $\ep^{\f{s}{s+3/2}}=n^{-\f{s}{2s+3}}:$ here again, this is the same optimal speed of convergence.

All these orders of magnitude for the convergence rates remain true for the size-structured model studied below.

\subsection{Estimating a size-dependent division rate}
\label{subsec:estim:size}
We now review the methods and results developed to estimate a size-dependent division rate, as built in Section~\ref{sec:model:PDE} and analysed in Section~\ref{sec:anal:growthfrag}. We follow the same notations as above. We do not treat here the interesting question of estimating the fragmentation kernel $b(y,x)$, and refer for instance to~\cite{doumic:hal-01501811,tournus2021insights,hoang2020nonparametric,hoffmann2011statistical}.

\subsubsection{Individual dynamics data}

Combining stochastic, deterministic and asymptotic approaches allows us to  obtain easily reconstruction formulae.\\

Let us first revisit heuristically the formulae of~\cite{DHKR,robert:hal-00981312}. In general terms, we observe
$$\big\{(\xi_u,\chi_u,\zeta_u),\;\;u \in {\mathcal U}_k\big\},\qquad k=1{\text{ or }}k=2,\qquad{\text{or}}\qquad \{\xi_u^T,\; u \in {\cal V}_T\},$$
with $(\xi_u,\chi_u,\zeta_u)$ respectively the size at birth, size at division and lifetime of the individual $u$ taken in the sample ${\cal U}_k$ defined by~\eqref{def:U1} and~\eqref{def:U2}, $\cal V_T$ defined by~\eqref{def:VT} and $\xi_u^T$ the size of the individual $u \in {\cal V}_T$ alive at time $T.$ ${\cal U}_1$ models the genealogical observation and individual dynamics data, ${\cal U}_2$ the population observation and individual dynamics data, and ${\cal V}_T$ population point data and population observation.

Assuming that the asymptotic behaviour of Section~\ref{sec:anal:growthfrag} has been reached in each of these models, we can interpret the densities with the help of Equation~\eqref{eq:eigenproblem}.
\begin{itemize}
\item $\xi_u,$ $u\in {\cal U}_k,$ has a density distribution given by
$$f_k^b (x)=\f{k\int_x^\infty B(y)\tau(y)b(y,x)N_k(y)dy}{k\int_0^\infty \int_s^\infty B(y)\tau(y)b(y,s)N_k(y)dyds} = \f{\int_x^\infty B(y)\tau(y)b(y,x)N_k(y)dy}{\int_0^\infty  B(y)\tau(y)N_k(y)dy}.$$
This formula is obtained by identifying the last term in~\eqref{eq:eigenproblem} with the newborn proportion, and normalise it to obtain a density.
\item $\chi_u,$ $u\in {\cal U}_k,$ has density distribution given by
$$f_k^d (x)=\f{ B(x)\tau(x)N_k(x)}{\int_0^\infty  B(y)\tau(y)N_k(y)dy}.$$
\item $\xi_u^T$  has for density $N_k.$
\end{itemize}
To estimate $B,$ we can then write
$$B(x)=\f{f_k^d (x)}{\tau (x)N_k(x)}{\int_0^\infty  B(y)\tau(y)N_k(y)dy},$$
and it remains to find formulae for $\tau N_k$ and for ${\int_0^\infty  B(y)\tau(y)N_k(y)dy}$.\\

\noindent {\bf Case $k=1$ (genealogical observation).}
We denote $C=\int_0^\infty  B(y)\tau(y)N_1(y)dy$ and write~\eqref{eq:eigenproblem} as
$$ \p_x (\tau N_1) +  C f_1^d = C  f_1^b \implies \f{\tau(x)N_1(x)}{C}=\int_x^\infty \left(f_1^d(y)-f_1^b(y)\right)dy,$$
so that we obtain the reconstruction formula
\begin{equation}
\label{estim:beta:size:1}
B(x)=\f{f^d_1(x)}{\int_x^\infty \left(f_1^d(y)-f_1^b(y)\right)dy}.\end{equation}

{\bf Case $k=2$ (population observation).}
We then have ${\int_0^\infty  B(y)\tau(y)N_2(y)dy}=\lambda_2,$ and ~\eqref{eq:eigenproblem} may be written as
$$\lambda_2 N_2 + \p_x (\tau N_2) + \lambda_2 f_2^d = 2 \lambda_2 f_2^b,$$
from which we deduce
$$\f{\tau(x)N_2(x)}{\lambda_2}=\int_x^\infty \left(f_2^d(y) -  2f_2^b(y)\right)e^{\lambda_2 \int_x^y \f{ds}{\tau(s)}} dy,$$
leading finally to the reconstruction formula
\begin{equation}
\label{estim:beta:size:2}
B(x)=\f{f_2^d(x)}{\int_x^\infty \left(f_2^d(y) - 2 f_2^b(y)\right)e^{\lambda_2 \int_x^y \f{ds}{\tau(s)}} dy}.
\end{equation}
Using either~\eqref{estim:beta:size:1} or~\eqref{estim:beta:size:2} and replacing
 $f_k^d$ and $f_k^b$ by the empirical distribution obtained from samples $(\xi_i^b,\xi_i^d),$ we can estimate $B$ without any knowledge on the fragmentation kernel $b,$ , as soon as both newborn and dividing cells distributions are observed. For genealogical data ($k=1$), even the growth rate $\tau$ may be unknown. This remark may be generalised to other models, see for instance the formula~(14) of~\cite{DHKR} for a model with $k=1$, the mitosis kernel (for which we have the simplification $f_k^b(x)=2 f_k^d(2x)$) and variable growth rates.
%

\paragraph{Individual dynamics data, stochastic approach, genealogical observation}

In the cases where we model the noise either deterministically or through an {\it i.i.d} sample,
the reconstruction formulae~\eqref{estim:beta:size:1} and~\eqref{estim:beta:size:2} immediately yield estimation results similar to the ones of Propositions~\ref{prop:estim:age:direct:det} and~\ref{prop:estim:age:indiv:gene} stated for the age model. More involved is the case where we do not depart from~\eqref{eq:eigenproblem} but from the stochastic model; it has been studied in~\cite{DHKR} for the case of genealogical observation, easier to study than the population observation case. To our best knowledge, the solution for population observation, with all the difficulties we already mentioned for the age problem (selection bias, censoring, non-ancillarity) plus a more intricate model, remains open.\\

In this setting, we look for a nonparametric estimator of $x \mapsto B(x)$ using the observation scheme $\mathcal U_1$ defined in Section \ref{sec: indiv dyn data}
We thus observe
$$(\xi_u)_{u \in \mathcal U_{1}}\;\;\text{and}\;\;(\zeta_u)_{u \in \mathcal U_{1}}.$$
We have (see Section~\ref{sec:model:branching})
$$
\begin{array}{ll}
\PP(\chi_u \in (x,x+dx) \vert \chi_u\geq x)=B(x)dx=\tau(x)B(x) dt,
\\ \\
 \PP(\chi_u\geq x \vert \xi_{u})=\1_{\{x\geq \xi_{u}\}} \exp\left(-\int_{\xi_{u}}^x B(y)dy\right).
 \end{array}
$$
Hence we infer, taking the equal mitosis kernel so that $2\xi_u=\chi_{u^-}$,
$$\mathbb P\big(\xi_u\in (x',x'+dx')\big|\,\xi_{u^-}=x\big) \\
=2B(2x'){\1}_{\{2x' \geq x\}}\exp\big(-\int_{x}^{2x'} B(y)dy\big)dx'.
$$
We thus obtain a simple and explicit representation for the transition kernel ${\mathcal P}_B\big(x, dx')= {\mathcal P}_B\big(x, x')dx'$ as
$${\mathcal P}_B\big(x, x')
=   2B(2x'){\1}_{\{2x' \geq x\}}\exp\big(-\int_{x}^{2x'} B(y)dy\big).$$
Under appropriate conditions on $B$ set out in details below, there exists a unique invariant probability
$\nu_B(dx) = \nu_B(x)dx$ on $[0,\infty)$ such that the following contraction property holds
\begin{equation} \label{eq:contraction}
\sup_{|g| \leq V}\big|{\mathcal P}^k_Bg(x)-\int_0^\infty g(z)\nu_B(z)dz\big| \leq RV(x)\gamma^k
\end{equation}
(where, for an integer $k\geq 1$, we set ${\mathcal P}_B^k={\mathcal P}_B^{k-1}\circ {\mathcal P}_B$)
for an appropriate Lyapunov function  $V$ and some (explicitly computable) $\gamma < 1$.  The proof of \eqref{eq:contraction} goes along a classical scheme and is detailed in Proposition 4 of \cite{hoffmann2011statistical}, and for $\tau(x)=\kappa x$ \eqref{eq:contraction} holds with
$$V(x)=\exp\big(\tfrac{m}{\kappa\mu}x^\mu\big)$$
for $\mu>0$.
Expand further the equation $\nu_B\mathcal P_B=\nu_B$:
\begin{align*}
& \nu_B(y)
= \int_0^\infty \nu_B(x){\mathcal P}_B\big(x,y\big)dx \\
  & = 2 B(2y) \int_{0}^{2y}\nu_B(x) \exp\big(-\int_{x}^{2y}{B(y')}dy'\big)dx \\
  & = 2 B(2y)\int_0^\infty \int_0^\infty {\1}_{\displaystyle \{x\leq 2y, y' \geq y\}}\nu_B(x)\,{\mathcal P}_B\big(x,y'\big)dy'dx.
\end{align*}
This yields the key representation
$$
\nu_B(y) =  2 B(2y)\mathbb P_{\nu_B}\big(\xi_{u^-} \leq 2y,\;\xi_u \geq y\big).
$$
We conclude
\begin{equation} \label{eq:representationB}
B(y)=\f{1}{2}\frac{\nu_B(y/2)}{\mathbb P_{\nu_B}\big(\xi_u^- \leq y, \xi_u \geq y/2\big)}
\end{equation}
and this yields the estimator
\begin{align*}
\widehat B_{n,h}(y) &
 = \f{1}{2}\,\frac{n^{-1}\sum_{u\in {\mathcal U}_{[n]}}K_{h}(\xi_u-y/2)}{n^{-1}\sum_{u \in {\mathcal U}_{[n]}} {\bf 1}_{\displaystyle \{\xi_{u^-}\leq y, \xi_u \geq y/2\}} \bigvee \varpi_n},
\label{def estimator}
\end{align*}
where the kernel $K_{h}(y)=h^{-1}K\big(h^{-1}y\big)$ is specified with an appropriate bandwidth (and technical threshold $\varpi_n >0$).

We assess the quality of $\widehat B_n$ in squared-loss error over compact intervals ${\mathcal D}$. We need to specify local smoothness properties of $B$ over ${\mathcal D}$, together with general properties that ensure that  the empirical measurements converge with an appropriate speed of convergence. This amounts to impose an appropriate behaviour of $B$ near the origin and infinity.\\
For $\alpha > 0$
and  positive constants
$r,m, \ell, L$,
introduce continuous functions
$B:[0,\infty)\rightarrow [0,\infty)$  such that
\begin{equation} \label{loc control}
\int_{0}^{r/2}x^{-1}B(2x)dx \leq L,\;\;\;\int_{r/2}^{r}x^{-1}B(2x)dx \geq \ell,\qquad
B(x) \geq m\, x^\alpha\;\;\text{for}\;\;\;x\geq r.
\end{equation}
Define
$$\delta:=\frac{1}{1-2^{-\alpha}} \exp\big(- (1-2^{-\alpha})\tfrac{m}{\kappa\alpha} r^\alpha\big)
.$$
Let $\gamma$ denote the spectral radius of the operator ${\mathcal P}_B - 1 \otimes \nu_B$ acting on the Banach space of functions $g: [0,\infty)\rightarrow \R$ such that
$$\sup\{|g(x)|/V(x), x\geq 0\}<\infty.$$
\begin{assumption}\label{the full tree assumption}
We have
$\delta<\tfrac{1}{2}$
and
$\gamma< 1$.
\end{assumption}
It is possible to obtain bounds on $r,\,m,\,\ell,\,L$ so that Assumption \ref{the full tree assumption} holds, by using explicit bounds on $\gamma$ following Hairer and Mattingly \cite{hairer2011yet}, see also Baxendale \cite{baxendale2005renewal}.
We are ready to state the performance of the estimator.


\begin{theorem}{\bf{(Adapted from Theorem~2 from~\cite{DHKR})}} \label{upper bound}
Specify $\widehat B_{n,h}$ with a kernel $K$ satisfying Assumption \ref{as:K} for some $n_0>0$ and
$$h=n^{-1/(2s+1)},\;\; \varpi_n \rightarrow 0.$$
For every  compact interval ${\mathcal D}\subset (0,\infty)$ such that $\inf {\mathcal D} \geq r/2$, there exists a choice of  $m,$ $\ell$, $L$, $\alpha$ such that Assumption~\ref{the full tree assumption} is satisfied. For $B$ $s$-H\"older satisfying~\eqref{loc control},
we have
$$
\E_{\mu}\big[\|\widehat B_n-B\|_{L^2({\mathcal D})}^2\big]^{1/2} \lesssim \varpi_n^{-1}n^{-s/(2s+1)},$$
where $\E_{\mu}[\cdot]$ denotes expectation with respect to any initial distribution $\mu(dx)$ for $\xi_\emptyset$ on $(0,\infty)$ such that $\int_{0}^\infty V(x)^2\mu(dx)<\infty$.
 \end{theorem}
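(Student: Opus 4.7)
The plan is to build on the representation formula \eqref{eq:representationB}, namely $B(y) = \tfrac{1}{2}\nu_B(y/2)/\mathbb{P}_{\nu_B}(\xi_{u^-}\leq y,\,\xi_u\geq y/2)$, and estimate the numerator and the denominator separately. Writing $D(y) = \mathbb{P}_{\nu_B}(\xi_{u^-}\leq y,\,\xi_u\geq y/2)$ and $\widehat{D}_n(y) = n^{-1}\sum_{u\in \mathcal{U}_1}\mathbf{1}_{\{\xi_{u^-}\leq y,\,\xi_u\geq y/2\}}$, and setting $\widehat{N}_{n,h}(y)=n^{-1}\sum_{u\in\mathcal{U}_1}K_h(\xi_u-y/2)$, a first-order expansion gives
\begin{equation*}
\widehat{B}_{n,h}(y)-B(y) = \tfrac{1}{2}\frac{\widehat{N}_{n,h}(y)-\tfrac{1}{2}\nu_B(y/2)}{\widehat{D}_n(y)\vee\varpi_n} + B(y)\,\frac{D(y)-(\widehat{D}_n(y)\vee\varpi_n)}{\widehat{D}_n(y)\vee \varpi_n},
\end{equation*}
so it suffices to control each of the two empirical quantities in $L^2({\mathcal D})$ and to handle the event $\{\widehat{D}_n(y)<\varpi_n\}$ through the threshold, which contributes at worst a multiplicative factor $\varpi_n^{-1}$.

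First I would handle the numerator, which is the genuine nonparametric part. The standard bias--variance split reads
\begin{equation*}
\E_\mu\bigl[(\widehat{N}_{n,h}(y)-\tfrac{1}{2}\nu_B(y/2))^2\bigr] \lesssim \mathfrak{b}_h(\nu_B)(y/2)^2 + \mathrm{Var}_\mu\bigl(\widehat{N}_{n,h}(y)\bigr).
\end{equation*}
The bias is bounded by $h^{2s}$ uniformly on ${\mathcal D}$ because $\nu_B$ inherits $s$-H\"older regularity from $B$ via the explicit form of $\mathcal P_B$, combined with the oscillating kernel of order $\ell\geq s-1$. The delicate part is the variance term: the $\xi_u$ are not i.i.d.\ but form a \emph{bifurcating Markov chain} on the binary tree, with transition kernel $\mathcal P_B$. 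Expanding the variance, the diagonal contribution gives the usual $(nh)^{-1}$ term, while off-diagonal terms involve $\E_\mu[K_h(\xi_u-y/2)K_h(\xi_v-y/2)]$ for $u,v$ at a common ancestor at distance $k$. Using the contraction property \eqref{eq:contraction} with rate $\gamma$ and Lyapunov function $V$, these off-diagonal covariances decay like $\gamma^k$, and summing against the $2^k$ pairs at generation distance $k$ yields a convergent series precisely under the assumption $\delta<\tfrac{1}{2}$ (equivalently, the combined geometric rate $2\gamma$-type criterion of the bifurcating ergodic theorem, as in Guyon's or Delmas--Marsalle's framework). This yields $\mathrm{Var}_\mu(\widehat{N}_{n,h}(y))\lesssim (nh)^{-1}$, uniformly in $y\in\mathcal{D}$, the moment assumption $\int V^2 d\mu<\infty$ ensuring the initial-distribution terms stay finite.

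The denominator is simpler: $\widehat{D}_n(y)$ is a bounded functional of the same bifurcating chain, so the same ergodic argument yields $\E_\mu[(\widehat{D}_n(y)-D(y))^2]\lesssim n^{-1}$ uniformly in $y\in\mathcal{D}$, i.e.\ the parametric rate. Moreover, under \eqref{loc control} one checks that $\inf_{y\in\mathcal{D}}D(y)>0$, so for $\varpi_n$ small enough the threshold event has negligible probability (decaying polynomially by Chebyshev).

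Combining the two controls and choosing $h=n^{-1/(2s+1)}$ to balance bias $h^{2s}$ and variance $(nh)^{-1}$ yields
\begin{equation*}
\E_\mu\bigl[\|\widehat{B}_{n,h}-B\|_{L^2(\mathcal{D})}^2\bigr]^{1/2}\lesssim \varpi_n^{-1}\bigl(h^s + (nh)^{-1/2}\bigr) \lesssim \varpi_n^{-1}\,n^{-s/(2s+1)},
\end{equation*}
which is the announced rate. The main obstacle is the second step: establishing the $(nh)^{-1}$ variance bound for the kernel estimator on the bifurcating tree. All the subtlety of Assumption~\ref{the full tree assumption} (the simultaneous smallness of $\delta$ and $\gamma$) is dedicated to this point, since the exponential proliferation of pairs at depth $k$ in the binary tree can otherwise overwhelm the mixing rate; it is exactly the interplay between the branching factor $2$ and the contraction rate $\gamma$ that has to be controlled, typically via $V$-uniform moment bounds and a Guyon-type ergodic theorem for bifurcating Markov chains.
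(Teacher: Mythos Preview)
The paper does not give a proof of this theorem; it is stated as adapted from Theorem~2 of \cite{DHKR} and followed only by a numerical illustration and remarks on optimality and adaptivity. Your proof plan is essentially the strategy carried out in \cite{DHKR}: decompose via the representation $B(y)=\tfrac{1}{2}\nu_B(y/2)/D(y)$, control the kernel estimator of $\nu_B$ by a bias--variance split in which the variance bound $(nh)^{-1}$ relies on geometric ergodicity of the bifurcating Markov chain (this is precisely where Assumption~\ref{the full tree assumption} enters, to make the sum over tree pairs converge against the branching factor~$2$), and control the empirical denominator at the parametric rate $n^{-1/2}$, the threshold $\varpi_n$ absorbing the event where $\widehat D_n$ is small at the price of the factor $\varpi_n^{-1}$.

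Two minor corrections. First, in your first-order expansion the target of $\widehat N_{n,h}(y)$ is $\nu_B(y/2)$, not $\tfrac{1}{2}\nu_B(y/2)$; the factor $\tfrac{1}{2}$ already sits in front of the fraction. Second, you index the empirical sums by $\mathcal U_1$ (the single-branch set of \eqref{def:U1}) but then invoke the bifurcating tree structure; the estimator in the statement is indexed by $\mathcal U_{[n]}$, which in \cite{DHKR} is the full binary subtree, and your covariance argument (pairs at distance $k$, $2^k$ of them, decay $\gamma^k$) is the right one for that setting. On a single branch one would have an ordinary Markov chain and Assumption~\ref{the full tree assumption} would be unnecessary.
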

We illustrate this result in Fig.~\ref{fig:DHKR1} for the reconstruction of a rate $xB(x)=x^2$.
\begin{figure}
\begin{center}
\includegraphics[width=10cm,height=5cm]{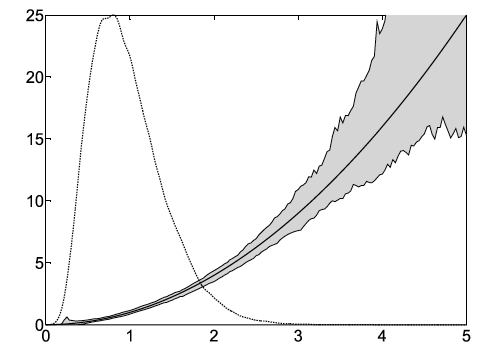}
\end{center}
\caption{\label{fig:DHKR1}Reconstruction for $n=2^{10}$, error band for $95\%$,
$100$ simulations, with a small threshold $\varpi_n=1/n$. We see that the error grows larger as $x$ grows larger, as expected since the density of cells vanishes. Fig.~3 from~\protect\cite{DHKR}.}
\end{figure}
Since $\varpi_n$ is arbitrary, we obtain the classical rate $n^{-s/(2s+1)}$ which is optimal in a minimax sense for density estimation. It is optimal in our context, using for instance the results of \cite{penda2017adaptive}. The knowledge of the smoothness $s$ that is needed for the construction of $\widehat B_n$ is not realistic in practice. An adaptive estimator could be obtained by using a data-driven bandwidth in the estimation of the invariant density $\nu_B(y/2)$ in \eqref{eq:representationB}. The Goldenschluger-Lepski bandwidth selection method seen in this context \cite{DHRR} would presumably yield adaptation.
Finally, let us revisit the representation formula
$$B(y)=\frac{1}{2}\frac{\nu_B(y/2)}{\mathbb P_{\nu_B}\big(\xi_{u^-} \leq y,\;\xi_u \geq y/2\big)}$$
by noticing that we always have $\{\xi_{u^-} \geq y\} \subset \{\xi_u \geq y/2\}$, hence
\begin{align*}
\mathbb P_{\nu_B}\big(\xi_{u^-}\leq y , \xi_u \geq y/2\big)  = &\; \mathbb P_{\nu_B}\big(\xi_u \geq y/2)- \mathbb P_{\nu_B}(\xi_{u^-} \geq y\big) \\
 = & \int_{y/2}^\infty \nu_B(x)dx- \int_y^\infty \nu_B(x)dx\\
  = &\int_{y/2}^y \nu_B(x)dx.
\end{align*}
Finally, for constant growth rate, we obtain
$$B(y)=\frac{1}{2}\frac{\nu_B(y/2)}{\int_{y/2}^y \nu_B(x)dx},$$
where we recognize~\eqref{estim:beta:size:1}.
The ``gain" in the convergence rate $n^{-s/(2s+1)}$ versus the rate $n^{-s/(2s+3)}$ obtained in the proxy model based on the transport-fragmentation equation, as seen in Section~\ref{subsec:estim:age} for the age model,
comes from the fact that we estimate the invariant measure ``at division" versus the invariant measure ``at fixed time" in the proxy model.
In other words, there is more ``nonparametric statistical information" in data extracted from ${\mathcal U}_2$ rather than $\mathcal V_T$, regardless of the tree structure. However $\big|\mathcal V_T \big| \approx \big| {\mathcal U}_2 \big|$
as stems from the supercritical branching process structure.

\subsubsection{Population point data}
\label{subsubsec:estim:size:pop}
We now turn to the case of less rich observation schemes, where we only have access to population point data, which relate to the eigenvector solution $N_k$. For the noise, we consider either a deterministic noise or a sampling noise: as for the individual dynamics case, a complete approach, which would depart from the stochastic branching tree, has not yet been carried out.

Contrarily to the case of individual dynamics data, the shape of the fragmentation kernel is important here, and needs to be previously known, as well as the Malthusian parameter $\lambda_k$ and the growth rate $\tau.$

\

Estimating the division rate $B$ may be formulated by the following inverse problem.

\

\noindent
\boxed{\begin{minipage}{\textwidth}
We know{\it a priori} the growth rate $\tau$, the fragmentation kernel $b(y,x)=\f{1}{y}b_0(\f{x}{y})$ and the Malthusian parameter $\lambda_k$, and measuring either $N_k^\ep$ (deterministic noise) such that
$$\Vert N_k^\ep - N_k\Vert_{W^{-s,p}([0,\infty))} \leq \ep,$$
or a sample $x_1,\cdots x_n$ realisations of $X_1,\cdots X_n$ {\it i.i.d.} random variables of law $N_k$ (sampling noise),
with $(N_k,\lambda_k)$ solution to~\eqref{eq:eigenproblem} {\it i.e.}
\begin{equation}\label{eq:inv1}
\left\{\begin{array}{l}
 \f{\p}{\p x} (\tau N_k(x)) + \lambda_k N(x) =- (\tau B N_k)(x) +  k\int_0^1 (\tau
 B N_k)(\f{x}{z}) \f{b_0(dz)}{z},
 \\ \\ \tau N_k(0)=0,
 \end{array}
 \right.
\end{equation}
How to estimate $B$?
\end{minipage}}

\

We notice the same ingredients as for the age model: 1/ $B$ appears in the equation only multiplied by $\tau N_k,$ so that we cannot estimate it at places where $\tau N_k$ vanishes; 2/ the equation displays a size-derivative of $N_k,$ so that we expect an inverse problem of degree of ill-posedness one as in Section~\ref{subsubsec:estim:age:pop}; 3/ replacing the unknown $B$ by the density of dividing cells $B N_k,$ we transform a nonlinear inverse problem into a linear one.

However, contrarily to the simple age model, we do not have any explicit formula here: a dilation operator needs to be inverted.
We decompose~\eqref{eq:inv1} into
\begin{equation}\label{eq:inv2}
\begin{array}{lll}
L(N_k) = {G}_k(B \tau N_k) & {\text{ with }}&
{G}_k(f)(x):= k\int_0^1 f(\f{x}{z})\f{b_0(dz)}{z}-f(x),\\ \\
&& L(N_k):=  \partial_x\big(\tau N_k\big)+\lambda_k N_k,\end{array}
\end{equation}
and estimate $B$ through the following steps:
\begin{itemize}
\item Solve ${G}_k(f)=L$ for $f,$ $L$ in suitable  spaces: for simplicity of inversion formulae, we thus choose Hilbert spaces of type $L^2([0,\infty),x^pdx)$~\cite{Engl}.
Note that under this shape, the problem to solve $N_k\to f=B \tau N_k$ is now linear.

\item Estimate $L(N_k)$ from the measurement in the chosen space: to do so, we can either depart from $N_k^\ep$ or from a statistical sample, and then divide by (an estimate of) $N_k$ and threshold appropriately.

\end{itemize}
The second step is exactly the same as what has been done in Section~\eqref{subsec:estim:age} for the renewal equation, so that we will not detail this part anymore here, and focus on the first step, which is specific to the growth-fragmentation equation.

\paragraph{Solving a dilation equation}

\

The first step consists in inverting the dilation operator $G_k$ defined by~\eqref{eq:inv2}. We begin to treat the equal mitosis case, first studied  by B. Perthame and J. Zubelli~\cite{PZ,DPZ,DHRR}, and then turn to more general fragmentation kernels~\cite{BDE}.

For the equal mitosis or diagonal kernel $b_0(x)=\delta_{x=\f{1}{2}},$ we have the following result.
\begin{proposition}{\bf{(Adapted from Theorem A.3. of~\cite{DPZ})}}\label{prop:dilation}
Let $G_k$ defined by~\eqref{eq:inv2}, $b_0(x)=\delta_{x=\f{1}{2}},$ $L\in L^2(x^p dx)$ with $p\neq 2k-1$.

There exists a unique solution $f\in L^2(x^p dx)$ to $$G_k(f)=2kf(2\cdot)-f(\cdot)=L,$$ and this solution depends continuously on $\Vert L\Vert_{L^2(x^pdx)}$. Moreover, defining
$$H_k^0:=\sum\limits_{j=1}^\infty (2k)^{-j} L(2^{-j} x),\qquad H_k^\infty:=-\sum\limits_{j=0}^\infty (2k)^{j}L(2^jx),$$
we have $f=H_k^0$ if $p<2k-1$ and $f=H_k^\infty$ if $p>2k-1.$ If $L\in L^q$ then $H_0\in L^q$  for any $1\leq q\leq \infty.$

For $L=0,$ any distribution of the form $ f(\f{\ln x}{x^2})$ with $f\in {\cal D}' ([0,\infty))$ $\ln-2$ periodic is solution.
\end{proposition}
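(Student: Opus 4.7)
The strategy is to solve the dilation equation $2k\,f(2x)-f(x)=L(x)$ by formal iteration, which produces two natural candidate series, and then to identify which of them lies in the correct weighted $L^2$ space depending on $p$. First I would write, for any candidate $f$,
\[
f(x)=2k\,f(2x)-L(x)\qquad\text{and}\qquad f(x)=\tfrac{1}{2k}\big(f(x/2)+L(x/2)\big),
\]
and iterate the second identity to produce $H_k^0(x)=\sum_{j\geq 1}(2k)^{-j}L(2^{-j}x)$, and the first to produce $H_k^\infty(x)=-\sum_{j\geq 0}(2k)^j L(2^j x)$. A direct telescoping check shows that whenever the series converge, $G_k(H_k^0)=G_k(H_k^\infty)=L$, so both are bona fide formal inverses; the question is in which space each makes sense.

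The key quantitative input is the scaling of the norm under dilation: for any $\lambda>0$,
\[
\big\|L(\lambda\,\cdot)\big\|_{L^2(x^p dx)}=\lambda^{-(p+1)/2}\big\|L\big\|_{L^2(x^p dx)}.
\]
Applied to $\lambda=2^{-j}$, the $j$-th term of $H_k^0$ is controlled in norm by $\big(2^{(p+1)/2}/(2k)\big)^j\|L\|$, so the series converges absolutely in $L^2(x^p dx)$ iff $2^{(p+1)/2}<2k$, which for $k\in\{1,2\}$ reads exactly $p<2k-1$. The symmetric computation with $\lambda=2^j$ gives geometric ratio $(2k)\,2^{-(p+1)/2}$, summable iff $p>2k-1$. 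Continuous dependence $\|f\|_{L^2(x^p dx)}\lesssim\|L\|_{L^2(x^p dx)}$ follows directly from the geometric bound on partial sums, so the linear map $L\mapsto f$ is bounded in each case. The same scaling argument carries over to $L^q$ via $\|L(\lambda\cdot)\|_{L^q}=\lambda^{-1/q}\|L\|_{L^q}$, giving the analogous summability for $H_k^0$ in any $L^q$, $1\leq q\leq\infty$.

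For uniqueness the main step is to show that the homogeneous equation $2k\,h(2x)=h(x)$ has only the zero solution in $L^2(x^p dx)$ when $p\neq 2k-1$. Writing $\alpha=\log_2(2k)$ (so $\alpha=k$ for $k\in\{1,2\}$), the change of variables $h(x)=x^{-\alpha}\phi(\log_2 x)$ converts $2k\,h(2x)=h(x)$ into $\phi(y+1)=\phi(y)$: every solution is log-periodic in this sense. Computing
\[
\|h\|_{L^2(x^p dx)}^2=\int_{\R}|\phi(y)|^2\,2^{y(p-2\alpha+1)}\,(\ln 2)\,dy,
\]
and noting that $2\alpha-1=2k-1$, we see that when $p\neq 2k-1$ the integrand either grows or decays exponentially along a periodic function, forcing $\phi\equiv 0$, hence $h\equiv 0$. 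Thus the candidate given by the relevant series is the unique $L^2(x^p dx)$-solution, and the degenerate case $p=2k-1$ is exactly where the entire log-periodic family survives, which is why it must be excluded. I expect the main technical nuisance to be justifying the interchange of the infinite sum and the equation at the $L^2$ level (which is handled by absolute convergence in norm plus continuity of the translation $f\mapsto f(2\cdot)$ on $L^2(x^p dx)$); the log-periodic uniqueness argument is the only genuinely substantive ingredient, and all other steps are routine bookkeeping.
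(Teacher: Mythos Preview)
Your argument is correct and self-contained, but it proceeds quite differently from the paper. The paper's proof invokes the Lax--Milgram theorem: it introduces the bilinear forms
\[
a_p(u,v)=\int\bigl(-2k\,u(2x)+u(x)\bigr)v(x)\,x^p\,dx,\qquad
b_p(u,v)=\int\bigl(2k\,u(2x)-u(x)\bigr)v(2x)\,x^p\,dx,
\]
and checks via Cauchy--Schwarz that $a_p$ is coercive for $p>2k-1$ and $b_p$ for $p<2k-1$, which yields existence, uniqueness and continuous dependence in one stroke. Your route is more constructive: you build the two candidate series directly, use the dilation scaling $\|L(\lambda\cdot)\|_{L^2(x^p dx)}=\lambda^{-(p+1)/2}\|L\|$ to decide which one converges on which side of $p=2k-1$, and then handle uniqueness by the log-periodic change of variables $h(x)=x^{-\alpha}\phi(\log_2 x)$. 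The Lax--Milgram argument is slicker and more robust to perturbations of the operator (and extends more readily to other kernels $b_0$), whereas your approach has the advantage of delivering the explicit formulas $H_k^0$, $H_k^\infty$ and their norm bounds as an immediate by-product, which is exactly what the statement asks for. Both identify the same threshold $p=2k-1$ for the same underlying reason---the competition between the factor $(2k)^{\pm j}$ and the dilation weight $2^{\mp j(p+1)/2}$.
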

\begin{proof} The proof is based on the Lax-Milgram theorem applied to the bilinear forms
$$\left\{\begin{array}{l}a_p(u,v)= \int \left(-2k u(2x)+u(x)\right)v(x)x^p dx,
\\ \\
b_p(u,v)= \int \left(2k u(2x)-u(x)\right)v(2x)x^p dx,
\end{array}\right.$$
and by Cauchy-Schwarz inequality, $a_p$ and $b_p$ are respectively coercive for $p>p_k$ and $p<p_k$ with $p_1=1$ and $p_2=3.$
\end{proof}

In this result, we first notice that uniqueness depends on the space chosen: there is no reason, generally speaking, to have $H_k^0=H_k^\infty,$ so that departing from an estimate $L(N_k^\ep)$, we have infinitely many choices. We first restrict to the two solutions $H_k^0$ and $H_k^\infty,$ since the others do not vanish fast enough at infinity compared to the one we look for, recall Theorem~\ref{theo:eigenMDPG}. Among these two solutions, $H^0_k$ "behaves better" at infinity, and $H^\infty_k$ at zero, hence in~\cite{BDE} we proposed a combination of both solutions as a best approximation in $L^2((x^p+1)dx)$ for $p>3$, namely, for a given $L,$ we define
\begin{equation}H_k^{\bar x} = H_k^0 \1_{\{x\leq \bar x\}} + H_k^{\infty} \1_{\{x>\bar x\}}.\label{def:H}
\end{equation}
This solution is no more an exact solution of $G_k(f)=L$  unless $H_k^0=H_k^\infty$. However, this property being satisfied for the "true" underlying distribution, it defines a convenient approximation as shown below.

  For general self-similar fragmentation kernels, we use the Mellin transform - an important tool for the study of the equation in many cases, see~\cite{doumic2016time,escobedo2017short}. We recall that the Mellin transform
 $\mathcal M$ is an isometry between $L^2(x^q dx)$ and $L^2(\f{q+1}{2}+i\R)$ defined by
\begin{equation}\mathcal M [f] (s):=\int_{ 0 }^\infty x^{s-1}f(x)dx,\quad \mathcal M^{-1}_{q} [F](x):=\int_{-\infty}^\infty x^{-{\f{q+1}{2}} -iv}F({{\f{q+1}{2}}}+iv)dv\label{def:Mellin}.
\end{equation}
The Mellin transform for the operator $G_k$ is then
$$\mathcal M [\mathcal G(f)](s)=(k\mathcal M [b_0] (s)-1) \mathcal M [f](s),$$
and we see that, if $\vert k\mathcal M [b_0] (s)-1\vert$  is bounded from below by a positive constant on the integration line $\f{q+1}{2}+i\R,$ we can define the inverse
\begin{equation}\label{eq:invMellin}H_k^{q}:=\mathcal M^{-1}_{q}\biggl[\f{\mathcal M [\mathcal G(f)] (s)}{k\mathcal M [b_0](s)-1}\biggr].
\end{equation}
We notice that for $s_k=k$ we have $k{\cal M}[b_0](s_k)-1=0.$ This zero, together with the isometry given by~\eqref{def:Mellin} and the inversion formula~\eqref{eq:invMellin}, gives insight on why $p_k=2k-1$ is the pivot in Proposition~\ref{prop:dilation} (we have $s_k=\f{p_k+1}{2}$), and why $H_k^q\neq H_k^{q'}$ if $q<s_k<q':$ the residue theorem quantifies exactly their difference, see Proposition~4 in~\cite{BDE} for more details.

\paragraph{Estimate with a deterministic noise}

Let us assume here that we measure $N_k$ up to a deterministic noise of level $\ep>0$, $0\leq \theta<1:$
$$\Vert N_k- N_{k}^\ep\Vert_{H^{-\theta}([0,\infty))} \leq \ep.$$
By the general theory of linear inverse problems, let us pick any regularisation method of optimal order~\cite{Engl}, of parameter $h>0,$ and define an approximation $L(N_k^\ep)_h$ such that, for $N_k\in H^m([0,\infty))$, and $q>2k-1,$ we have
$$\Vert  L(N_k^\ep)_h - L(N_k)\Vert_{L^2((1+x^q)dx)}\leq C (\f{\ep}{h} +h^m),$$
with $C$ depending only on the method chosen and on the norm of $N_k$ in $H^m.$
Since we want to estimate $H_k=BN_k$ in $L^2((1+x^q)dx)$ with $q>2k-1$, we define for some $a>0$
\begin{equation}\label{def:Hest}H_{\ep,h}:=\mathcal M^{-1}_{0}\biggl[\f{\mathcal M [L(N_k^\ep)_h] (s)}{k\mathcal M [b_0](s)-1}\biggr] \1_{\{x\leq a\}}
\;{\dst{+}}\;
\mathcal M^{-1}_{q}\biggl[\f{\mathcal M [L(N_\ep)_h] (s)}{k\mathcal M [b_0](s)-1}\biggr] \1_{\{x>a\}}\end{equation}
and we get the following proposition.
\begin{proposition}{\bf{(Adapted from~\cite{BDE}, Theorem~1.1.)}}
For $N\in H^m([0,\infty))$ solution to the eigenequation~\eqref{eq:inv1} we have
$$\Vert N- N_\ep\Vert_{H^{-\theta}([0,\infty))} \leq \ep \implies \Vert H_{\ep,h} - BN_k\Vert_{L^2((1+x^q)dx)} \leq C (\f{\ep}{h^{1+\theta}} +h^m)$$
where $C$ depends only on $ \Vert N \Vert_{H^m}$ and the regularisation method chosen.
\label{prop:estim:size:inv:det}
\end{proposition}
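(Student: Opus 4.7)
\emph{Plan.} The overall strategy is to split the error $H_{\ep,h}-BN_k$ into two pieces: (i) the stability of the inversion of the dilation operator $G_k$ between suitable Mellin--weighted $L^2$ spaces, and (ii) the regularisation error incurred when approximating the first-order operator $L(N_k) = \partial_x(\tau N_k) + \lambda_k N_k$ by $L(N_k^\ep)_h$. The two pieces are bridged by the Mellin isometry underlying Proposition~\ref{prop:dilation}, yielding a final bound of the form $C(\ep h^{-(1+\theta)} + h^m)$.

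\emph{Step~1: inversion of $G_k$.} Under the a priori decay and integrability bounds on the eigenvector given by Theorem~\ref{theo:eigenMDPG}, the true product $BN_k$ lies simultaneously in $L^2(dx)$ and $L^2(x^q dx)$, so the Mellin inversion~\eqref{eq:invMellin} yields $BN_k = \mathcal M_0^{-1}\bigl[\mathcal M[L(N_k)]/(k\mathcal M[b_0]-1)\bigr]$ on $\{x\leq a\}$ and $BN_k = \mathcal M_q^{-1}\bigl[\mathcal M[L(N_k)]/(k\mathcal M[b_0]-1)\bigr]$ on $\{x>a\}$. By linearity, setting $F := \mathcal M[L(N_k^\ep)_h - L(N_k)]/(k\mathcal M[b_0]-1)$, one obtains
\[
H_{\ep,h} - BN_k = (\mathcal M_0^{-1}F)\,\mathbf 1_{\{x\leq a\}} + (\mathcal M_q^{-1}F)\,\mathbf 1_{\{x>a\}}.
\]
On the vertical lines $\Re s = 1/2$ and $\Re s = (q+1)/2$, neither of which meets the zero $s_k = k$ of $s \mapsto k\mathcal M[b_0](s)-1$ (thanks to $q>2k-1$), the denominator is bounded below by a positive constant. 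Combined with the Mellin isometry between $L^2(x^{q'}dx)$ and $L^2(\tfrac{q'+1}{2} + i\mathbb R)$, this gives
\[
\|H_{\ep,h} - BN_k\|_{L^2((1+x^q)dx)} \leq C_1\,\|L(N_k^\ep)_h - L(N_k)\|_{L^2((1+x^q)dx)}.
\]

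\emph{Step~2: regularisation of $L$, and main obstacle.} The operator $L$ is first order, hence for a regularisation of optimal order $m$ (e.g.\ convolution with an order-$\ell$ kernel $K_h$ with $\ell \geq m-1$) the bias satisfies $\|L(N_k)_h - L(N_k)\|_{L^2((1+x^q)dx)} = O(h^m)$ by $N_k \in H^m$, while the noise contribution $\|L(N_k^\ep - N_k)_h\|_{L^2((1+x^q)dx)}$ picks up a factor $h^{-1}$ from the derivative present in $L$ and an additional $h^{-\theta}$ from the Sobolev gap between $L^2$ and $H^{-\theta}$, yielding a bound of order $\ep h^{-(1+\theta)}$. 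Plugged into Step~1, this closes the proof. The most delicate point is in Step~1: one must verify that $BN_k$ has enough decay at infinity and integrability near zero to sit in both $L^2$ spaces, so that the two Mellin inversions agree with $BN_k$ and the cutoff at $a$ does not introduce a spurious jump in the limit. The condition $q > 2k-1$ is exactly what places the large-$x$ integration line strictly above $s_k=k$, securing the uniform lower bound on $|k\mathcal M[b_0](s) - 1|$; the remainder is routine tracking of constants through the Mellin isometry.
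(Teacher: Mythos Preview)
Your proposal is correct and follows exactly the two-step strategy the paper sets up in the paragraphs preceding the proposition (which itself is stated without formal proof, deferring to \cite{BDE}): first the bounded inversion of $G_k$ in the weighted spaces $L^2(dx)$ and $L^2(x^q dx)$ via the Mellin isometry and the uniform lower bound on $|k\mathcal M[b_0](s)-1|$ away from $s_k=k$, then the regularisation estimate $\|L(N_k^\ep)_h - L(N_k)\|_{L^2((1+x^q)dx)} \leq C(\ep h^{-(1+\theta)} + h^m)$ coming from the first-order nature of $L$ and the $H^{-\theta}$ noise. Your identification of the delicate point---that $BN_k$ must lie in both weighted $L^2$ spaces so that the cutoff at $x=a$ in \eqref{def:Hest} introduces no discontinuity in the limit---is precisely the content of the discussion around \eqref{def:H} and is guaranteed by the decay estimates of Theorem~\ref{theo:eigenMDPG}.
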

\paragraph{Estimation with a stochastic noise}
Let us now assume that we observe a sample of $n$ cells, of sizes
$x_1,\cdots,x_n$, that are realisations of
$$(X_1,\ldots, X_n),$$
where the $X_i$ are independent, with common density distribution $N(x)dx$ (recall that we have $N \geq 0$ and that we pick the normalisation $\int_0^\infty N=1$). From \eqref{eq:inv2}, we have the formal representation
$$B = \frac{({G}_k)^{-1}\big(L(N_k)\big)}{\tau N_k}.$$
Thus, from data $(X_1,\ldots, X_n)$ we can build a regularisation
$$\widehat N_{h,k} = \frac{1}{nh}\sum_{i = 1}^n K_h(\cdot-X_i)$$
which simply amounts to have a convolution of the empirical measure $n^{-1}\sum_{i = 1}^n \delta_{X_i}(dx)$ with a well behaved kernel $K_h(\cdot) = h^{-1}K(h^{-1}\cdot)$. The resulting $\widehat N_{h,k}$ being smooth (by picking $K$ smooth enough), we may  compute the action of the differential operator $L$ on $\widehat N_h$. As soon as the operator $G_k$ has bounded inverse, or that a nice approximation $({\mathfrak G}_k)^{-1}$ of $({G}_k)^{-1}$ is available, we may form the simple estimator:
\begin{equation} \label{eq: def est DHRR}
\widehat B_{n,h} = \frac{({\mathfrak G}_k)^{-1}\big(L(\widehat N_{h,k})\big)}{\tau \widehat N_{h,k}}.
\end{equation}

In~\cite{DHRR}, we realise this program for the binary fragmentation operator and we propose a method to automatically select the bandwidth $h$ from data.
The following kind of results can be obtained: for a compact set $\mathcal D$, one can construct an approximation $({\mathfrak G}_k)^{-1}$ and select a bandwidth $h$ such that if $B \in H^s$, then
\begin{equation} \label{eq: the rate we find}
\E\big[\|\widehat B_{n,h}-B\|_{L^2(\mathcal D)}\big] \lesssim n^{-s/(2s+3)}.
\end{equation}
 We obtain the rate of convergence that corresponds to ill-posed problem of order 1, and this is consistent with the other approaches based on large population data (here of size $n$) alive at a given fixed, (but large) time.

\subsubsection*{Comparing deterministic and stochastic methods}

The stochastic method rate of convergence $n^{-s/(2s+3)}$ for ill-posed problems of degree 1 in the statistical minimax theory \cite{gine2021mathematical} is to be compared with the deterministic method rate $\epsilon^{s/(s+1)}$, as stems from the classical theory exposed for instance in the classical textbook \cite{Engl}. These are actually the same results, as stems from a classical analysis of comparison between stochastic and deterministic ill-posed inverse problems, see the illuminating paper~\cite{NP}.

Suppose we have an approximate knowledge of $N$ and $\lambda$ up to deterministic errors $\zeta_1 \in L^2$ and $\zeta_2 \in \R$ with noise level $\varepsilon>0$: we observe
\begin{equation} \label{det error model}
N_\varepsilon=N+\varepsilon \zeta,\;\;\|\zeta\|_{L^2}\leq 1,
\end{equation}
and
\begin{equation} \label{calibration}
\lambda_\varepsilon = \lambda + \varepsilon \zeta_2,\;\;|\zeta_2|\leq 1.
\end{equation}
From the formal representation
$$B = \frac{({G}_k)^{-1}\big(L(N_k)\big)}{\tau N_k},$$
The recovery of $L(N_k)$ is ill-posed in the terminology of Wahba \cite{wahba1977practical} since it involves the computation of the derivative of $N$. If ${G}_k$ is bounded with an inverse bounded in $L^2$ and the dependence in $\lambda$ is continuous, the overall inversion problem is ill-posed of degree $a=1$. By classical inverse problem theory for linear cases (here the problem is non linear), this means that if $N \in W^{s,2}$, the optimal recovery rate in $L^2$-error norm should be $\varepsilon^{s/(s+a)} = \varepsilon^{s/(s+1)}$ (see also \cite{PZ,DPZ}).

Suppose now that we replace the deterministic noise $\zeta_1$ by a random Gaussian {\it white noise}: we observe
\begin{equation} \label{stochastic error model}
N_\varepsilon = N+\varepsilon \dot W = A \tfrac{\partial}{\partial x} N(x)dx+\varepsilon \dot{\mathcal W}
\end{equation}
where $\dot{\mathcal W}$ is a Gaussian white noise (a random distribution) and $A\varphi(x)=\int_0^x \varphi(y)dy$ denotes the integration operator (which has degree of ill-posedness $a=1$). This setting is actually statistically (asymptotically) very close to observing $(X_1,\ldots, X_n)$ where the $X_i$ are independent random variables, with common density $N$, at least over compact intervals $\mathcal D$ and as soon as $N$ does not degenerate, as follows from the celebrated result of Nu\ss baum \cite{nussbaum1996asymptotic}.\\

In this setting, we want to recover $\tfrac{\partial}{\partial x} N$. Integrating, we equivalently observe
$$Y_\varepsilon(\cdot) = \int_0^\cdot A \tfrac{\partial}{\partial x} N(x)dx +\varepsilon \mathcal W_\cdot,$$
where $(\mathcal W_x, x\geq 0)$ is a Brownian motion. Applying formally the $1/2$-fractional derivative operator $D^{1/2}$, we recast the observation $Y_\varepsilon$ into an equivalent observation
$$Z_\varepsilon = D^{1/2}\int_0^\cdot A \tfrac{\partial}{\partial x} N(x)dx+\varepsilon D^{1/2}\mathcal W_\cdot,$$
so that $D^{1/2}\mathcal W\cdot$ is in $L^2$. The operator $\varphi \mapsto D^{1/2}\int_0^\cdot A \varphi(x)dx$ maps $L^2$ onto $W^{1+1-1/2,2}$ {\it i.e.} we have an effective degree of ill-posedness $a=3/2$. We should then obtain an optimal rate of the form
$$\varepsilon^{s/(s+3/2)} = \varepsilon^{2s/(2s+3)}=n^{-s/(2s+3)}$$
for the calibration $\varepsilon = n^{-1/2}$ dictated by \eqref{calibration} when we compare our statistical model with the deterministic perturbation. This is exactly the rate we find in \eqref{eq: the rate we find}: the deterministic error model and the statistical error model coincide to that extent.
%
%
%
%
%
%
%
%
%

\subsection{Estimating an increment-dependent division rate}
\label{subsec:estim:adder}
\subsubsection*{Individual dynamics data: a simple renewal problem?}

Despite the intricate character of the adder model, which combines the influences of the age and size, in the case where one is given individual dynamics data, the estimate to do is exactly the same as for the pure age-structured model... at least for the genealogical observation case. In this case, given that we observe $a_1,\cdots, a_n$ increments at division,  they form a renewal process, observed without bias, and we can apply~\eqref{eq:estim:age1} and standard density estimation methods as explained in Section~\ref{subsec:estim:age}.

For the population observation case ($k=2$), things are more intricate: as for the previous models, there exists a selection bias, which is not the same as given by~\eqref{def:B:direct} since the growth rate of the increment is no more constant and moreover depends on size. However, in the case of exponential growth $\tau(x)=\kappa x,$  we have the small miracle already mentioned that $\lambda_2=\kappa$ and, with $C,C_d>0$  normalisation constants, we have
$$N_1(a,x)=C x N_2(a,x) \implies f_1(a,x)=C_d x f_2(a,x),$$
where we denote $f_k(a,x)$ the density of dividing cells of increment $a$ and size $x$.
We can thus take advantage of Formula~\eqref{eq:estim:age1} and write, for $f_{k}^a(a)=\int_0^\infty f_k(a,x)dx$ the marginal density along $a$:
\begin{equation}
\label{def:B:incr2}
B(a)=\f{f_1^a(a)}{\int_a^\infty f_1^a(s)ds}=\f{\int_0^\infty f_2 (a,x) x dx}{\int_a^\infty \int_0^\infty f_2(s,x) x dx ds}.
\end{equation}
Instead of estimating directly the density $f_2^a(a)$ from the sample $(a_1,\cdots,a_n)$, as done for the genealogical case, one has to estimate a weighted density: from a sample $\left((A_1,X_1);\cdots (A_n,X_n)\right)$ of increment and size at division of cells according a population observation, we define the following estimate for the debiased density $f_1:$
\begin{equation}\label{def:f1debiased}
\hat f_1^a (a)da=K_h * \Big(\f{1}{n}\sum\limits_{i=1}^n X_i \delta_{A_i}(da)\Big)=\f{1}{n}\sum\limits_{i=1}^n X_i K_h(a-A_i)da,
\end{equation}
for which we can prove the same rate of convergence as done for instance in Proposition~\ref{prop:estim:age:direct:det}.
\subsubsection*{Population point data: a severely ill-posed problem}
When we have treated the case of population point data for the age-structured equation, in Section~\ref{subsec:estim:age}, we have remarked that it was rather an instructive toy model than a really interesting case, since if we are able to observe ages in a population, this should mean that we are also able to observe newborn among them, hence dividing cells, hence we would be back to the individual dynamics data.

In the case of the size-structured model, studied in Section~\ref{subsubsec:estim:size:pop} on the contrary, the inverse problem setting appears fully relevant in many experimental cases and different applications - fragmenting polymers, {\it in vivo} dividing cells, etc.

For the increment model, as for the renewal model, a "naive" inverse problem, which is to assume that we measure samples distributed along a density $N(a,x)$ solution to~\eqref{eigen:adder} seems irrelevant: observing samples distributed along $N(a,x)$ implies that we also observe newborn, {\it i.e.} $N(0,x)$, and then why not as well dividing cells - which would bring us back to the individual dynamics data, discussed in the paragraph above.  

Much more realistic is the following case, studied in~\cite{doumic2020estimating}: Assume we are given $x_1,\cdots,x_n$ a $n-$ sample of realizations of $X_1,\cdots,X_n$ {\it i.i.d.} random variables of density $N^x_k(x)$ defined by
$$N_k^x(x):=\int_0^\infty N_k(\i,x)d\i,$$
other said, $N_k^x$ is the $x$-marginal, or yet the size-distribution of a sample of cells following an increment-structured dynamics, and having reached their steady behaviour given by $N_k(\i,x)$. How to estimate an {\it increment}-dependent division rate $B(a)$ from a {\it size}-dependent distribution?

Surprisingly, the answer is given by the following proposition, which provides us with a reconstruction formula - and shows that this inverse problem is severely ill-posed.
\begin{proposition}{\bf{(Adapted from Proposition~1 in~\cite{doumic2020estimating})}}
We have the following reconstruction formula, where $\cal F$ and ${\cal F}^{-1}$ denote the Fourier and inverse Fourier transform:
$$B(\i)=\f{f(\i)}{\int_\i^\infty f(s)ds}, \qquad f(\i):={\cal F}^{-1}\biggl(\f{{\cal F} [H_1(\cdot)]}{{\cal F}[H_1(2\cdot)]}\biggr),$$
where $H_1(x)=\kappa x \int_0^\infty B(\i) x^{k-1} N_k(\i,x)d\i$ is the solution of the dilation equation given in Proposition~\ref{prop:dilation}:
\begin{equation}
\label{eq:dilation}
{\cal L}(x)=\f{\p}{\p x} (\kappa x^k N_k)=2 H_1(2x)-H_1(x).
\end{equation}
From this formula, we deduce that if
we observe $X_1,\cdots X_n$ an i.i.d. sample of law $N_k^x(x),$ we propose the following estimator for $B:$
$$
\widehat{B}_{n,h,h'}(\i)
= \frac{\widehat{f}_{n,h}(\i)}{\widehat{S}_{n,h}(\i) }
= \frac{\int_{-1/h'}^{1/h'}  \frac{\widehat{H_{1,n,h}(\cdot)^*}(\xi)}{\widehat{H_{1,n,h}(\cdot)^*}(\xi/2)}    e^{-\mathbf{i}a\xi} d\xi}
{\int_{\i}^{\infty}\int_{-1/h'}^{1/h'}  \frac{\widehat{H_{1,n,h}(\cdot)^*}(\xi)}{\widehat{H_{1,n,h}(\cdot)^*}(\xi/2)}   e^{-\mathbf{i}s\xi} d\xi ds }
$$
where $\widehat H_{k,n}$ denotes an approximate solution to the dilation equation~\eqref{eq:dilation} as seen in Proposition~\ref{prop:estim:size:inv:det} with, for the left-hand side,
$${\cal L}_{n,h}(x):=\f{1}{n}\sum_{i=1}^n\f{\p}{\p x}\left(\kappa x^kK_h(x- X_i)\right)$$

\end{proposition}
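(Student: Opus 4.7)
The plan is to exploit the factorisation that the conservative ($k=1$) adder eigenvector enjoys along the characteristic flow $dx/dt=\kappa x$, $d\i/dt=\kappa x$, combined with a self-similar convolution identity from which $B$ can be extracted by Fourier inversion. First, I would search for $N_1$ under the ansatz $N_1(\i,x)=q(x-\i)\Psi(\i)/x$, which reflects the fact that along characteristics the size at birth $x_b=x-\i$ is conserved (and the $1/x$ prefactor comes from the Jacobian of the exponential stretching). Plugging into~\eqref{eigen:adder} with $\lambda_1=0$ collapses everything to the ODE $\Psi'=-B\Psi$, so $\Psi(\i)=\exp(-\int_0^\i B(s)ds)$ and $q$ is (up to a normalisation constant) the density of sizes at birth. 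Writing $\phi(\i):=B(\i)\Psi(\i)$ for the division-increment density, a direct computation then gives the key convolution identity
\begin{equation*}
H_1(x)=\kappa x\int_0^\infty B(\i)N_1(\i,x)d\i=\kappa(\phi*q)(x).
\end{equation*}

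Next, integrating~\eqref{eigen:adder} in the $\i$-variable from $0$ to $\infty$ makes the $\p_\i$ term telescope, and combined with the boundary condition $\kappa xN_1(0,x)=4\kappa x\int_0^\infty B(\i)N_1(\i,2x)d\i=2H_1(2x)$ this produces exactly the dilation equation~\eqref{eq:dilation}; the same boundary condition read through the ansatz of Step~1 yields $q(z)=2H_1(2z)/\kappa$. Inserting this back into the convolution gives the self-similar relation $H_1(x)=2\int_0^\infty\phi(\i)H_1(2(x-\i))d\i$. Fourier-transforming and using $\mathcal F[H_1(2\cdot)](\xi)=\tfrac12\mathcal F[H_1](\xi/2)$, this reads $\mathcal F[H_1](\xi)=\mathcal F[\phi](\xi)\,\mathcal F[H_1](\xi/2)$, whence $\mathcal F[\phi]=\tfrac12\,\mathcal F[H_1]/\mathcal F[H_1(2\cdot)]$, i.e.\ $f=2\phi$ in the notation of the statement. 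Since $\int_\i^\infty\phi(s)ds=\Psi(\i)$ is the survival function, we immediately recover $B(\i)=\phi(\i)/\int_\i^\infty \phi(s)ds=f(\i)/\int_\i^\infty f(s)ds$, which is the announced reconstruction formula.

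For the empirical part, starting from i.i.d.\ observations $X_1,\ldots,X_n\sim N_1^x$ I would form the kernel density $\widehat N_{1,n,h}^x(x)=\tfrac1n\sum_i K_h(x-X_i)$, apply the differential operator $L$ to obtain $\mathcal L_{n,h}(x)=\tfrac1n\sum_i\p_x(\kappa xK_h(x-X_i))$, invert the dilation equation along Proposition~\ref{prop:dilation} or~\eqref{def:H} to build $\widehat H_{1,n,h}$, evaluate the Fourier ratio on the band $|\xi|\leq 1/h'$, and finally plug the result into the survival ratio to recover $\widehat B_{n,h,h'}$ as in the statement. The main obstacle is the severe ill-posedness of the Fourier inversion: the denominator $\mathcal F[H_1(2\cdot)](\xi)=\tfrac12\mathcal F[H_1](\xi/2)$ typically decays very fast (e.g.\ exponentially, when $H_1$ is smooth enough) at high frequencies, so the stochastic error on $\widehat H_{1,n,h}$ is strongly amplified by the division and only logarithmic-type rates in $n$ can in general be expected. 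This stands in sharp contrast with the polynomial minimax rates of Section~\ref{subsec:estim:size} and is the price to pay for trying to recover an $\i$-dependent object from purely $x$-dependent data.
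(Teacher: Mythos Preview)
Your argument is correct and follows essentially the same route as the paper's proof. Your ansatz $N_1(\i,x)=q(x-\i)\Psi(\i)/x$ is precisely what the paper obtains by solving $\kappa x N_1$ along the characteristics (their formula $\kappa x N_1(\i,x)=\kappa(x-\i)N_1(0,x-\i)e^{-\int_0^\i B}$ is your factorisation with $q(u)\propto uN_1(0,u)$), and from there both proofs proceed identically: integrate in $\i$ to get the dilation equation, read off $q=2H_1(2\cdot)/\kappa$ from the boundary condition, and recognise $H_1=\phi*(2H_1(2\cdot))$ as a deconvolution. Your write-up is in fact a bit more explicit than the paper's sketch, since you carry out the Fourier step and the hazard-rate identification $B=\phi/\int_\i^\infty\phi$ in full, whereas the paper stops at ``appears as a deconvolution problem''; the only point you leave implicit is the passage from general $k$ to $k=1$ via $N_1\propto x^{k-1}N_k$, which the paper states at the outset.
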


\begin{proof} As a sketch of the proof in this simpler case where $\tau(x)=\kappa x$ (see~\cite{doumic2020estimating} for a general growth rate $\tau(x)$) we first write the equation for $N_1$, noticing that $N_1=x^{k-1} N_k$ up to a constant,  and integrate it along $\i$ to find the dilation equation~\eqref{eq:dilation}. We then solve the equation for $C(\i,x)=\kappa x N_1(\i,x)$ along the characteristics, and find
$$\begin{array}{ll}\kappa x  N_1(\i,x)&=\kappa (x-\i) N_1(0,x-\i)e^{-\int_0^\i B(s)ds}.
\end{array}$$
We use this expression in the definition of $H_1$ and find
$$H_1(x)=\int_0^x B(\i) \kappa x N_1(\i,x)d\i
= \int_0^x B(\i)\kappa (x-\i) N_1(0,x-\i)e^{-\int_0^s B(s)ds} d\i$$
using the boundary condition, we also have
$$\kappa x N_1(0,x)=4\kappa x \int_0^{2x} B(\i)N_1(\i,2x) dz=2H_1(2x)
$$
hence
$$H_1(x)=\int_0^x B(\i)e^{-\int_0^\i B(s)ds} 2H_1(2(x-\i))d\i=f_1 *(2H_1(2\cdot)) (x)$$
which appears as a deconvolution problem, where $2H_1(2x)$ plays the role of  "noise".
\end{proof}
We refer to~\cite{doumic2020estimating} for a thorough numerical investigation.

\section{Application to experimental data}
\label{sec:back}

In the previous section, we have developed methods to estimate the division rate in three different models: age-structured, size-structured or size-increment-structured model. In the case where data are rich enough, these methods can be used not only to estimate the division rate but also to select which model is more likely. This model selection is carried out here on the case of individual dynamics data:  in the case of point population data, estimating the division rate is possible, but the comparison between models is not, since the data is not informative enough.
\subsection{Guideline of a protocol}

To test a given (age, size or size-increment - or anything else not included in this chapter) model, we
\begin{itemize}
\item calibrate it as done in Section~\ref{sec:inverse},
\item simulate an age-size or increment-size model, in which our modelling assumption is embedded: see in Section~\ref{subsec:model:gene} the model~\eqref{eq: EDP gene}\eqref{eq: EDP gene bound}. For instance, if we want to compare the adder to the sizer without generalising any assumption on the growth rate or on the fragmentation kernel, we can simulate the following model:
$$
\f{\p}{\p t}  n_k + \f{\p}{\partial \i} (\kappa x n_k)+ \f{\p}{\partial x} \big(\kappa x n_k\big)  = -  \kappa x B(\i,x)n_k(t,\i,x) ,$$
$$n_k(t,\i=0,x)= 4k\int_0^\infty B(\i,2x) n_k(t,\i,2x)d\i$$
  till  its asymptotic steady behaviour $n_k(t,a,x)=e^{\lambda t} N_k(a,x)$ is reached. This simulation step may be carried out either by using the PDE and an adequate numerical scheme, see for instance~\cite{carrillo2014splitting,carrillo2019escalator}, or by Monte-Carlo simulations.
\item Compare quantitatively data and simulations, by defining a convenient distance between measures. This choice has always some intrinsic arbitrariness and is open to debate. We refer to the paper \cite{ramdas2017wasserstein} that discusses Wasserstein distances (and some of their weighted variants), interpreted as connections from univariate methods like the Kolmogorov-Smirnov test, QQ plots and ROC curves, to other multivariate tests. A thorough discussion of statistical tests or distance choices in this context lies beyond the level of generality intended here. We nevertheless refer to \cite{robert:hal-00981312} where an operational protocol is proposed to compare different division rate models.
\end{itemize}

\subsection{Some results}
This protocol has been used in~\cite{robert:hal-00981312} to compare the age-structured and the size-structured equation, compared quantitatively by $L^2$ norms between regularised solutions. We reproduce in Fig.~\ref{fig:BMC2} the sensitivity analysis carried out, which concluded that taking into account the experimental variability in the growth rates (recall Fig.~\ref{fig:distrib:growthrate} Left) or in the fragmentation kernel $b_0$ (recall Fig.~\ref{fig:distrib:growthrate} Right) does not improve significantly the results.

\begin{figure}

\includegraphics[width=\textwidth]{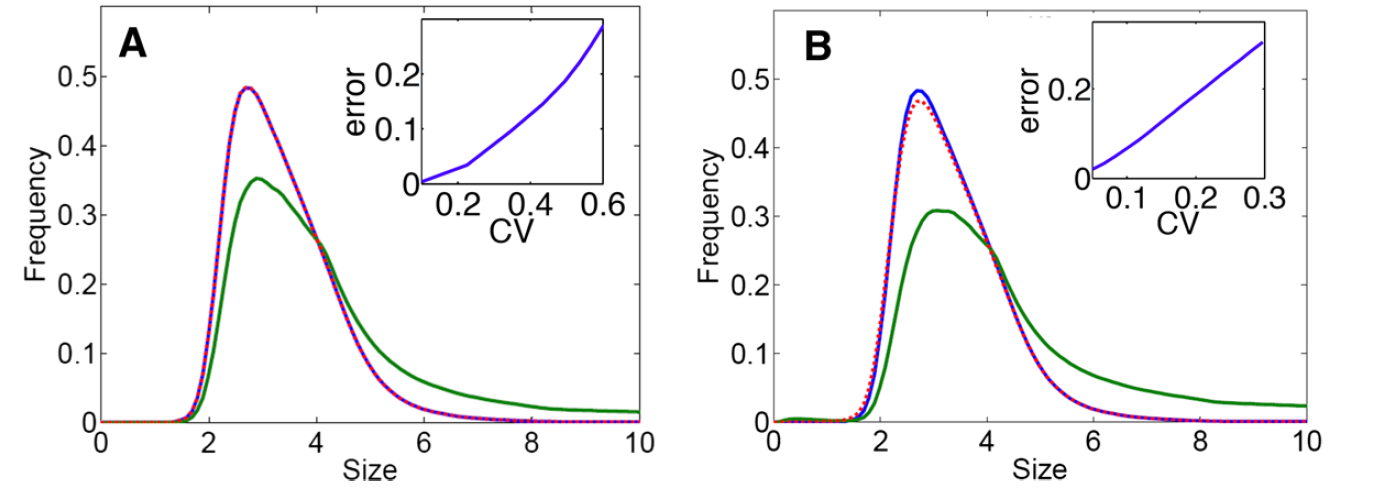}
\caption{Effect of adding variability on the size-distribution of cells. A: adding variability in the growth rate, B: in the fragmentation kernel. In green the size distribution with a coefficient of variation of $60\%$, in dotted pink the simulation without variability, in blue the estimate of the experimental distribution. The insets show the distance between the distribution with no variability and the distance with a variability of given CV: the conclusion is that, for these experiments on bacteria, variability is negligible and the simplified models fit well. Fig.~4 from~\protect\cite{robert:hal-00981312}.\label{fig:BMC2}}
\end{figure}
 This protocol  has then been improved to take into account the adder model, and implemented in the prototype CellDivision plateform \url{https://celldivision.paris.inria.fr/welcome/}, developed by Adeline Fermanian~\cite{doudoufer}. This plateform estimates the three models - age, size or increment structured - when the user provides individual dynamics data of lifetimes, size at birth, size at division and/or increment of size at division, together with a growth law. We show on Fig.~\ref{fig:celldiv} an application on data from~\cite{Wang} also used in~\cite{robert:hal-00981312}, for which it clearly appears that the incremental model fits better.
 \begin{figure}
 \includegraphics[width=\textwidth]{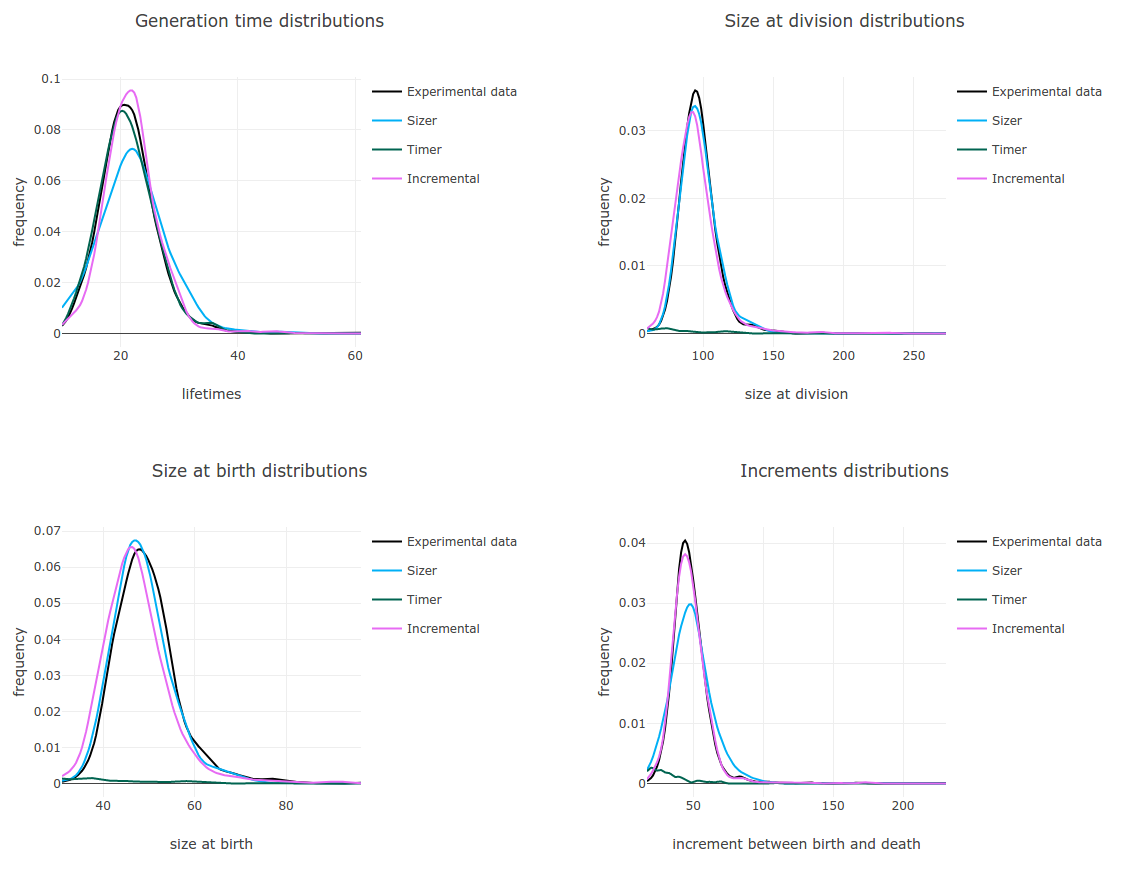}
 \caption{Screenshot of results obtained by the prototype plateform CellDivision, developed by Adeline Fermanian, to fit genealogical data of bacterial division from~\protect\cite{Wang}.\label{fig:celldiv} }
 \end{figure}

Of note, a very efficient way of comparing model and data, used in many biophysical papers such as~\cite{Jun_2015,soifer2016single}, consists in comparing the correlation coefficients between size, age and increment for the various models, since an age model predicts no correlation between size at birth and generation time, and an adder model between size at birth and increment at division. We reproduce in Table~\ref{tab:celldiv} the results obtained on genealogical data from~\cite{Wang}, comparing experimental correlations with the ones obtained from the calibrated  incremental model (using the CellDivision plateform): the match is excellent.

\begin{table}[ht]
\tbl{Correlation coefficients (C.C.)for the data shown in Fig.~\ref{fig:celldiv}.  \\ AD: Age at Division, SB: Size at Birth,  SD: size at Division, ID: Increment at Division. Computed by the CellDivision prototype plateform~\protect\cite{doudoufer}.}
{\begin{tabular}{l|c|c|c|c|c|c|r}
C.C. between: & AD/SB & AD/SD & AD/ID & SB/SD & SB/ID & SD/ID
\\
\hline
Experimental & -0.47 & 0.53 & 0.82 & 0.42 & -0.03 & 0.89
\\
Timer model & -0.02 & 0.04 & 0.08 & 0.98 & 0.93 & 0.98
\\
Sizer model & -0.66 & 0.67 & 0.92 & 0.08 & -0.39 & 0.89
\\
Adder model & -0.48 & 0.51 & 0.86 & 0.49 & -0.01 & 0.87
\end{tabular}
\label{tab:celldiv}}
\end{table}

\section{Perspectives and open questions}

In this chapter, we retraced the methods  developed by our group during the last decade, originating in~\cite{PZ}, to estimate the division rate in linear structured population equation models. They crucially rely on the mathematical analysis of the long-term behaviour of these equations and processes, sketched in Section~\ref{sec:anal}. Many developments still need to be done.

\

These models are well-adapted to model experiments in a steady environment, typically, with unrestricted nutrient and space. How to estimate the division in a non-steady environment? This could be of tremendous importance to understand for instance how cells react to an external stress, or to take into account cell-to-cell interaction.

\

We have focused in the chapter on three main models: age-structured, size-structured, and increment-structured model. The method is always the same, but the mathematical analysis of both the time asymptotics and of the inverse problem is specific. It would be very interesting to adapt it to richer models, for instance taking into account heterogeneity, or the G1/S/G2/M steps in the eucaryotic cell cycle, or yet models structured in DNA content, as the initiation of DNA replication appears to be the true important stage in the life of yeast cells~\cite{soifer2016single} for instance.

\

The inference methods outlined in Section~\ref{sec:inverse} could be enriched by taking into account other types of noise, in particular adding experimental measurement noise linked to image analysis. They could also lead to the design and study of statistical tests - especially interesting when the data is richer, given by individual dynamics data. In this case, we could also investigate the question of model selection: instead of depending on an {\it observed} variable, the division rate could depend on a {\it hidden} variable, more or less in the spirit of Section~\ref{subsec:estim:adder} for population point data~\cite{doumic2020estimating}.

\section*{Acknowledgments}
The content of this note is based on a lecture given at IMS of
National University of Singapore in January 2020. The authors thank
Professor Weizhu Bao and IMS for providing such a nice
opportunity.
This work was partially supported by the ERC Starting Grant SKIPPERAD (Grant number 306321). 


\end{document}